\def\NZQ{\mathbb}               
\def\NN{{\NZQ N}}
\def\ZZ{{\NZQ Z}}
\def\RR{{\NZQ R}}
\def\PP{{\NZQ P}}
\newtheorem{Theorem}{Theorem}[section]
\newtheorem{Lemma}[Theorem]{Lemma}
\newtheorem{Corollary}[Theorem]{Corollary}
\newtheorem{Proposition}[Theorem]{Proposition}
\newtheorem{Remark}[Theorem]{Remark}
\newtheorem{Example}[Theorem]{Example}
\newtheorem{Definition}[Theorem]{Definition}
\let\epsilon\varepsilon
\let\phi=\varphi
\let\kappa=\varkappa
\begin{document}
\title{Multiplicities of graded families of  linear series and ideals}
\author{Steven Dale Cutkosky }
\thanks{Partially supported by NSF}

\address{Steven Dale Cutkosky, Department of Mathematics,
University of Missouri, Columbia, MO 65211, USA}
\email{cutkoskys@missouri.edu}

\begin{abstract} We give simple necessary and sufficient conditions on projective schemes over a field $k$ for asymptotic limits of the growth of all graded linear series of a fixed Kodaira-Iitaka dimension to exist. We also give  necessary and sufficient conditions on a local ring $R$ of dimension $d$ for  the limit 
$$
\lim_{i\rightarrow\infty}\frac{\ell_R(R/I_n)}{n^d}
$$
to exist whenever $\{I_n\}$ is a 
 graded family of  $m_R$-primary ideals.
  We give a number of applications.
\end{abstract}

\maketitle

\section{Introduction} Recently, extending results in a series of papers  
(including Fujita \cite{Fuj}, Lazarseld \cite{La}, Takagi \cite{T},  Okounkov  \cite{Ok} and  Lazarsfeld and Mustata \cite{LM}),    Kaveh and Khovanskii \cite{KK} have proven  that if $L$ is
a graded linear series with Kodaira-Iitaka dimension $\kappa(L)\ge 0$    on a  projective variety $X$ over an algebraically closed field $k$,
 then there exists a positive integer $r$ such that 
\begin{equation}\label{I10}
\lim_{n\rightarrow \infty}\frac{\dim_k L_{a+nr}}{n^{\kappa(L)}}
\end{equation}
exists for any fixed $a\in \NN$.

We extend this result in \cite{C2} to hold for reduced projective schemes over a perfect field $k$, and give  examples of graded linear series on nonreduced projective schemes for which the above limits do not exist, along any arithmetic sequence. 

In this paper, we extend this result to hold over arbitrary ground fields $k$ (Theorem \ref{Theorem18}), and in Theorem \ref{TheoremN2}, give very simple necessary and sufficient conditions for a projective scheme over a field $k$ to have the property that for any given $\alpha\ge 0$, such limits exist for any graded linear system with $\kappa(L)\ge \alpha$. At the end of this introduction, we indicate the method used in the proof of Theorem \ref{Theorem18}. In \cite{C2}, we use base extension to reduce to the case when $k$ is perfect. Limits generally do not exist on nonreduced schemes, and the base extension of
a reduced scheme by a nonseparable field extension may no longer be reduced. Thus a different method is required to handle base fields which are not perfect.

We prove the following theorem about graded families of $m_R$-primary ideals. If $R$ is an excellent local ring, then Theorem \ref{TheoremI20} holds with the simpler condition that the nilradical of $R$ has dimension less that $d$ (Corollary \ref{Cornr20}).

\begin{Theorem}(Theorem \ref{Theorem4})\label{TheoremI20}  Suppose that $R$ is a local ring of dimension $d$, and  $N$ is the nilradical of the $m_R$-adic completion $\hat R$ of $R$.  Then   the limit 
\begin{equation}\label{I5}
\lim_{i\rightarrow\infty}\frac{\ell_R(R/I_n)}{n^d}
\end{equation}
exists for any graded family $\{I_i\}$ of $m_R$-primary ideals, if and only if $\dim N<d$.
\end{Theorem}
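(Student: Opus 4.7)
The plan is to treat the two implications separately.

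For sufficiency ($\dim N < d$), I would first reduce to the complete case: since completion is faithfully flat and preserves lengths of $m_R$-primary quotients, $\ell_R(R/I_n) = \ell_{\hat R}(\hat R/I_n\hat R)$, and $\{I_n\hat R\}$ remains a graded family of $m_{\hat R}$-primary ideals. So we may assume $R = \hat R$. The exact sequence
$$0 \longrightarrow N/(I_n\cap N) \longrightarrow R/I_n \longrightarrow R/(I_n+N) \longrightarrow 0$$
gives $\ell(R/I_n) = \ell(R/(I_n+N)) + \ell(N/(I_n\cap N))$. Since $R/N$ is a reduced complete local ring it is analytically unramified of dimension $d$, and $\{(I_n+N)/N\}$ is a graded family of $m_{R/N}$-primary ideals; existence of $\lim \ell(R/(I_n+N))/n^d$ then follows from the analytically unramified case already established in \cite{C2}.

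For the error term, the graded-family axiom gives $I_1^n \subseteq I_n$, hence
$$\ell(N/(I_n\cap N)) \le \ell(N/I_nN) \le \ell(N/I_1^nN).$$
Applying the Hilbert--Samuel theorem to the finitely generated $R$-module $N$ with respect to the $m_R$-primary ideal $I_1$ yields $\ell(N/I_1^nN) = O(n^{\dim N}) = o(n^d)$, so the error is negligible and the two ingredients combine to give the desired limit.

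For necessity ($\dim N = d$), I would produce an explicit graded family $\{I_n\}$ of $m_R$-primary ideals for which $\ell_R(R/I_n)/n^d$ does not converge. The hypothesis supplies a minimal prime $P$ of $\hat R$ with $\dim \hat R/P = d$ along which $\hat R$ is not reduced (the $P$-primary component of $(0)$ is properly contained in $P$), equivalently a nilpotent element of $\hat R$ whose annihilator has dimension $d$. The strategy is to build $\{I_n\}$ whose image modulo $N$ has a well-defined leading coefficient, but whose length contribution from the nilpotent direction oscillates with $n$: roughly, alternate $I_n$ between two extremes of the form $m^n + (\text{nilpotent layer})^{g(n)}$ for a suitably chosen function $g$, and then modify the construction to enforce $I_m I_n \subseteq I_{m+n}$, for instance by closing up under the sub-multiplicative law via $I_n := \sum_{i+j=n} I_i^{\text{cand}} I_j^{\text{cand}}$ on those $n$ where the naive choice fails.

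The main obstacle is in this necessity direction: naive oscillating choices immediately break the product containment $I_m I_n \subseteq I_{m+n}$ (as a small calculation in $k[[x,y]]/(x^2)$ shows), so exhibiting a genuine graded family with non-convergent ratio requires delicate bookkeeping. The natural template is Cutkosky's nonreduced projective-scheme counterexample in \cite{C2}, whose graded linear series have fluctuating Hilbert function along every arithmetic progression; I expect one can localize such an example, or adapt its combinatorial core, to produce the required local witness living inside $R$ itself, using the $d$-dimensional nilpotent component guaranteed by $\dim N = d$.
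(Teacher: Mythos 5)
Your sufficiency argument is essentially the paper's: pass to $\hat R$, split off the nilradical via the exact sequence $0\rightarrow N/(I_n\hat R\cap N)\rightarrow \hat R/I_n\hat R\rightarrow (\hat R/N)/I_n(\hat R/N)\rightarrow 0$, bound the nilpotent term by $O(n^{\dim N})=o(n^d)$ (the paper uses $m_R^{cn}N\subseteq I_n\hat R\cap N$; your bound via $I_1^nN$ works equally well), and invoke the existence of the limit for the reduced complete ring $\hat R/N$. One caveat: that last ingredient is \emph{not} ``already established in \cite{C2}'' --- \cite{C2} concerns graded linear series, and \cite{C1} proves the local statement only for equicharacteristic rings with perfect residue field. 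Over an arbitrary residue field it is precisely Theorems \ref{Theorem1} and \ref{Theorem2} of the present paper (via the semigroup method with $[V_\nu/m_\nu:k]<\infty$ and the decomposition over minimal primes in Lemma \ref{Lemma5}). As an ingredient in the proof of Theorem \ref{Theorem4} it is legitimately quotable, but your attribution points to results that do not cover the general case.

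The genuine gap is in the necessity direction. You correctly identify the starting data --- a minimal prime $p$ of the nilradical $N$ of $\hat R$ with $\dim \hat R/p=d$, equivalently an element $x$ with $x^2=0$ and $\operatorname{ann}(x)=p$ --- but you never produce a graded family, and you concede that your oscillating candidates violate $I_mI_n\subseteq I_{m+n}$; ``closing up'' under products is not shown to preserve non-convergence (the closure could simply collapse to the larger, well-behaved family). The paper's construction (Dao--Smirnov, Theorem \ref{Theorem3}) resolves exactly this point: take $I_n=m_R^n+x\,m_R^{\,n-\sigma(n)}$, where $\sigma$ is the \emph{non-decreasing} step function of (\ref{eqsigma}) for which $\sigma(n)/n$ fails to converge along every arithmetic sequence (\ref{eqnr1}). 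Because $\sigma$ is monotone, $I_mI_n\subseteq I_{m+n}$ holds automatically --- the oscillation is placed in the slowly varying exponent $n-\sigma(n)$, not in alternating between two regimes per $n$, so no closure operation is needed. Artin--Rees then gives $xR\cap I_n=x\,m_R^{\,n-\sigma(n)}$ for $n\gg 0$, hence $\ell_R(xR/xR\cap I_n)=P_{R/p}(n-\sigma(n))$, while $\ell_R(\overline R/I_n\overline R)$ with $\overline R=R/xR$ is eventually polynomial; so $\ell_R(R/I_n)/n^d$ inherits the divergence of $(1-\sigma(n)/n)^d$. Two further points are missing from your sketch: the construction lives in $\hat R$ (your $R$ may well be a domain), so the family of $m_{\hat R}$-primary ideals must be pulled back to $R$, e.g.\ by contraction, which preserves lengths of the quotients; and the case $d=0$, where Theorem \ref{Theorem3} does not apply, needs the separate easy family $I_n=m_R^{t+\tau(n)}$ with $\tau$ as in (\ref{eqtau}).
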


In our paper \cite{C1}, we prove that such limits exist for graded families of $m_R$-primary ideals, with the restriction that $R$ 
is an analytically unramified ($N=0$) equicharacteristic  local ring with perfect residue field (Theorem 5.8 \cite{C1}). In this paper we extend this result (in Theorem \ref{Theorem2}) to prove that the limit (\ref{I5}) exists for all graded families of $m_R$-primary ideals  in a local ring $R$ satisfying the assumptions of Theorem \ref{Theorem4}, establishing  sufficiency in Theorem \ref{Theorem4}. At the end of this introduction, we discuss the method of this proof, and what is required to make this extension.

In Example 7.1 \cite{C2},
we give an example of a graded family of $m_R$-primary ideals in a nonreduced local ring $R$ for which the above limit does not exist. 
Hailong Dao and   Ilya Smirnov have communicated to me that they have  also found this example, and have extended it to prove that if the nilradical of 
$R$ has dimension $d$, then there exists a graded family of $m_R$-primary ideals such that the  limit (\ref{I5}) does not exist (Theorem \ref{Theorem3} of this paper). Since a graded family of $m_R$-primary ideals on the completion of a ring lifts to the ring, necessity in Theorem \ref{Theorem4} follows 
from this result.

In the final Section \ref{SecApp} of this paper, we give some applications of this result and the method used in proving it, which generalize some of the applications in \cite{C1}. We extend the theorems to remove the requirement that the local ring be equicharacteristic with perfect residue field, to hold
on arbitrary analytically unramified  local rings.

We prove some volume = multiplicity formulas for graded families of $m_R$-primary ideals in  analytically unramified local rings in Theorems \ref{Theorem15} 
- \ref{Theorem13}. 
Theorem \ref{Theorem15} is proven for valuation ideals associated to an Abhyankar valuation in a regular local ring which is essentially of finite type over a field by Ein, Lazarsfeld and Smith in  \cite{ELS}, for general families of $m_R$-primary ideals when $R$ is a regular local ring containing a field by Mustata in \cite{Mus} and when $R$ is a local domain which is essentially of finite type over an algebraically closed field $k$ with $R/m_R=k$ by Lazarsfeld and Ein in Theorem 3.8 \cite{LM}. 
It is proven when $R$ is regular or $R$ is analytically unramified with perfect residue field in Theorem 6.5 \cite{C1}.

In Theorem \ref{Theorem100}, we prove such a formula for graded linear series on a proper variety over a field, extending the formula for projective varieties over an algebraically closed field proven by Koveh and Khovanskii (Theorem 3.3 \cite{KK}).

We also give, in Theorems \ref{Theorem14} - Theorem \ref{Theorem15}, some formulas showing that limits of the epsilon multiplicity type exist in analytically unramified local rings. We extend results of \cite{C1}, where it is assumed that $R$ is equicharacteristic with perfect residue field.
Epsilon multiplicity is defined as a limsup by Ulrich and Validashti in \cite{UV}.  We also extend a proof in \cite{C1} of an asymptotic formula on multiplicities proposed by Herzog, Puthenpurakal and Verma \cite{HPV}, to hold on analytically unramified local rings.

\subsection{Limits of graded families of linear series} Let $k$ be a field.

Before discussing the existence of limits, it is necessary to consider the  courser measure of the rate of growth which is possible for a graded linear series on a proper scheme over a field.
On a projective variety, there is a clear correspondence between the growth rate of a graded linear series $L=\bigoplus_{n\ge 0}L_n$ and its Kodaira-Iitaka dimension $\kappa(L)$. We have the following statements:

If $X$ is a proper $k$-variety and $L$ is a graded linear series on $X$, then
\begin{equation}\label{I3}
\dim_kL_n=0\mbox{ for all $n>0$ if and only if }\kappa(L)=-\infty.
\end{equation}

Suppose that $X$ is a proper $k$-variety and $L$ is a graded linear series on $X$ with $\kappa(L)\ge 0$. Then 
there exists a constant $\beta>0$ such that 
\begin{equation}\label{I1}
\dim_kL_n<\beta n^{\kappa(L)}
\end{equation}
for all $n$.
Further, there exists a positive number $m$ and a constant $\alpha>0$ such that 
\begin{equation}\label{I2}
\alpha n^{\kappa(L)} < \dim_kL_{mn}
\end{equation}
for all $n\gg 0$.

The statements (\ref{I3}) - (\ref{I2}) are classical for section rings of line bundles on a projective variety; they can be found  in Chapter 10 of \cite{I} and Chapter 2 of \cite{La}. The statements  hold  for linear series on a projective variety over an algebraically closed field (this is immediate from  \cite{KK}). 
In fact, the statements (\ref{I3})- (\ref{I2}) hold for linear series on a reduced proper scheme $X$ over a field $k$, as follows from Lemma \ref{LemmaKI} and 1) of Theorem \ref{Theorem8}.

From (\ref{I1}) and (\ref{I2}) we have that both $\liminf_{n\rightarrow\infty} \frac{\dim_k L_n}{n^{\kappa(L)}}$ and $\limsup_{n\rightarrow\infty} \frac{\dim_k L_n}{n^{\kappa(L)}}$ exist for a graded linear series $L$ on a proper variety. The remarkable fact is that when interpreted appropriately, they actually exist as limits on a proper variety. 

Suppose that $L$ is a graded linear series on a proper variety $X$ over a field $k$. 
The {\it index} $m=m(L)$ of $L$ is defined as the index of groups
$$
m=[\ZZ:G]
$$
where $G$ is the subgroup of $\ZZ$ generated by $\{n\mid L_n\ne 0\}$.

\begin{Theorem}\label{TheoremI1}  Suppose that $X$ is a $d$-dimensional proper variety over a field $k$, and $L$ is a graded linear series on $X$ with Kodaira-Iitaka dimension $\kappa=\kappa(L)\ge 0$. Let $m=m(L)$ be the index of $L$.  Then  
$$
\lim_{n\rightarrow \infty}\frac{\dim_k L_{nm}}{n^{\kappa}}
$$
exists. 
\end{Theorem}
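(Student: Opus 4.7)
The plan is to reduce Theorem \ref{TheoremI1} for proper varieties to the projective case, which is handled by Theorem \ref{Theorem18}. Given the proper $k$-variety $X$ and graded linear series $L = \bigoplus_{n \ge 0} L_n$, I apply Chow's lemma to produce a projective $k$-variety $X'$ together with a projective birational morphism $\pi \colon X' \to X$ (passing to the main component to keep $X'$ integral). If each $L_n \subseteq H^0(X, \mathcal{M}_n)$ for a line bundle $\mathcal{M}_n$ on $X$, set $L'_n := \pi^* L_n \subseteq H^0(X', \pi^*\mathcal{M}_n)$. Since $\pi$ is birational with $X$ and $X'$ integral, the generic stalks of $\mathcal{M}_n$ and $\pi^*\mathcal{M}_n$ coincide with a one-dimensional space over the common function field, so the pullback map $H^0(X, \mathcal{M}_n) \to H^0(X', \pi^*\mathcal{M}_n)$ is injective. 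Hence $\dim_k L'_n = \dim_k L_n$ for all $n \ge 0$.

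Since the Kodaira--Iitaka dimension of $L$ is pinned down by the growth estimates (\ref{I1}) and (\ref{I2}) applied to the sequence $\{\dim_k L_n\}$, and the index $m(L)$ is determined by the support $\{n : L_n \ne 0\}$, the identification $\dim_k L'_n = \dim_k L_n$ gives $\kappa(L') = \kappa$ and $m(L') = m$. Thus $L'$ is a graded linear series on the projective $k$-variety $X'$ with identical graded dimensions, index, and Kodaira--Iitaka dimension as $L$. Applying Theorem \ref{Theorem18} to the pair $(X', L')$ produces the limit
\[
\lim_{n \to \infty} \frac{\dim_k L'_{nm}}{n^\kappa},
\]
which by the identity of dimensions is the desired limit for $L$.

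The main obstacle is not in this reduction but inside Theorem \ref{Theorem18}. As the introduction emphasizes, the strategy of \cite{C2} --- base change to a perfect closure and work on the reduction --- breaks down when $k$ is imperfect, because a nonseparable base extension can destroy reducedness. The projective case over arbitrary $k$ must therefore be proved by a more intrinsic method that avoids base change to a perfect closure (presumably by constructing an Okounkov-type semigroup directly on $X'$ and estimating the contribution of residue-field degrees at the chosen flag). Once that projective result is granted, the reduction above is formal; the only routine verification is the injectivity of pullback on global sections of a line bundle under a birational morphism of integral schemes, which we used to identify the graded dimensions.
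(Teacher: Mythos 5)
Your Chow's Lemma reduction is exactly the paper's first step (the paper even observes that $L$ itself becomes a graded linear series for $\phi^*\mathcal L$ on the projective model, so the bookkeeping with $\kappa$ and $m$ is automatic), but the rest of the proposal has a genuine gap, in two respects. First, the logical direction is backwards relative to the paper: Theorem \ref{Theorem18} is \emph{deduced from} Theorem \ref{Theorem5} (which is Theorem \ref{TheoremI1}) by splitting a reduced scheme into irreducible components, so invoking it here is circular unless you actually supply an independent proof; your proposal explicitly defers that (``presumably by constructing an Okounkov-type semigroup directly''), which is precisely the substantive content of the theorem --- the choice of a regular closed point $Q$ (via Zariski, since smooth points need not exist over an imperfect field), the valuation with $[k(Q):k]<\infty$, the semigroups $S(L)^{(t)}$ for $1\le t\le [k(Q):k]$ with $\dim_k L_n=\sum_t\#S(L)^{(t)}_n$, strong nonnegativity via the section ring of a very ample divisor, Kaveh--Khovanskii, and the identification $q(L)=\kappa(L)$ by Noether normalization. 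None of this appears in the proposal.

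Second, even granting Theorem \ref{Theorem18} as a black box, its conclusion does not yield the statement you need: it asserts only that \emph{some} positive integer $r$ exists with limits along the arithmetic sequences $a+nr$, whereas Theorem \ref{TheoremI1} asserts the limit along all multiples of the index $m(L)$. The paper stresses that the conclusion of Theorem \ref{Theorem18} is strictly weaker and that $m(L)$ loses its meaning on reduced nonirreducible schemes; the example after Theorem \ref{Theorem8} (two linear subspaces meeting at a point) has index-compatible $n$ along which $\dim_k L_n/n^d$ oscillates between $0$, one positive value, and another, so the implication ``limits along some arithmetic sequences $\Rightarrow$ limit along multiples of $m(L)$'' is false in the generality of Theorem \ref{Theorem18}. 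To bridge this you must use integrality of $X$: for an integral $X$ the set $\{n\mid L_n\ne 0\}$ is a numerical semigroup containing all large multiples of $m$, and multiplication by a nonzero section of $L_{am}$ injects $L_{nm}\hookrightarrow L_{(n+a)m}$, which lets one compare the limits along the finitely many residue classes and show they coincide. You give no such argument, so as written the final step ``Applying Theorem \ref{Theorem18} \dots produces the limit $\lim_n \dim_k L'_{nm}/n^\kappa$'' does not follow.
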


In particular, from the definition of the index, we have that the limit
$$
\lim_{n\rightarrow \infty}\frac{\dim_k L_{n}}{{n}^{\kappa}}
$$
exists, whenever $n$ is constrained to lie in an arithmetic sequence $a+bm$ ($m=m(L)$ and $a$ an arbitrary but fixed constant), as $\dim_kL_n=0$ if $m\not\,\mid n$.

An example of a big line bundle where the limit in Theorem \ref{Theorem5} is an irrational number is given in Example 4 of Section 7 of the author's paper \cite{CS} with Srinivas.

Theorem \ref{TheoremI1} is proven for big line bundles on a nonsingular variety over an algebraically closed field of characterstic zero by Lazarsfeld (Example 11.4.7 \cite{La}) using Fujita approximation (Fujita, \cite{Fuj}). This result is extended by Takagi using De Jong's theory of alterations \cite{DJ} to hold on nonsingular varieties over algebraically fields of all characteristics $p\ge 0$.
Theorem \ref{TheoremI1} has been proven by  Okounkov  \cite{Ok} for section rings of ample line bundles,  Lazarsfeld and Mustata \cite{LM} for section rings of big line bundles, and for graded linear series by Kaveh and Khovanskii \cite{KK}, using an ingeneous method to reduce  to a problem of
counting points in an integral semigroup. All of these proofs require the assumption that {\it $k$ is  algebraically closed}. The theorem has been proven by the author when $k$ is a perfect field in \cite{C2}. In this paper we establish Theorem \ref{TheoremI1} over an arbitrary ground field (in Theorem \ref{Theorem5}).

The statement of Theorem \ref{TheoremI1} generalizes very nicely to reduced proper $k$-schemes, as we establish in Theorem \ref{Theorem18}. This theorem is proven for reduced projective $k$-schemes over a perfect field $k$ in \cite{C2}.

\begin{Theorem}(Theorem \ref{Theorem18}) Suppose that $X$ is a reduced proper scheme over a  field $k$. 
Let $L$ be a graded linear series on $X$ with Kodaira-Iitaka dimension  $\kappa=\kappa(L)\ge 0$.  
 Then there exists a positive integer $r$ such that 
$$
\lim_{n\rightarrow \infty}\frac{\dim_k L_{a+nr}}{n^{\kappa}}
$$
exists for any fixed $a\in \NN$.
\end{Theorem}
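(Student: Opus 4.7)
The plan is to induct on the number $s$ of irreducible components of $X$. The base case $s = 1$ reduces to $X$ being a proper $k$-variety, which is exactly Theorem \ref{Theorem5}. For the inductive step, write $X = X_1 \cup Y$, where $X_1$ is an irreducible component with its reduced induced structure (hence a proper $k$-variety) and $Y$ is the union of the remaining components (a reduced proper $k$-scheme with $s-1$ components). With $L_n \subseteq H^0(X, \mathcal{L}^n)$ for the relevant invertible sheaf $\mathcal{L}$, let $\rho_1$ and $\rho_Y$ be the restriction maps to $X_1$ and $Y$ respectively.

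Set $L^{(1)}_n := \rho_1(L_n)$, $L'_n := \ker \rho_1$, and $L''_n := \rho_Y(L'_n)$. Multiplicativity of restriction makes $L^{(1)} := \bigoplus L^{(1)}_n$ a graded linear series on $X_1$ and $L' := \bigoplus L'_n$ a graded linear series on $X$ (products of sections vanishing on $X_1$ still vanish on $X_1$). Since $X$ is reduced, any section of $\mathcal{L}^n$ restricting to zero on every irreducible component is zero; therefore $\rho_Y$ is injective on $L'_n$, so $\dim_k L''_n = \dim_k L'_n$, and $L'' := \bigoplus L''_n$ is a graded linear series on $Y$. The exact sequence $0 \to L'_n \to L_n \to L^{(1)}_n \to 0$ yields the key additive identity
\begin{equation*}
\dim_k L_n \;=\; \dim_k L^{(1)}_n + \dim_k L''_n.
\end{equation*}

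By Theorem \ref{Theorem5} applied to $L^{(1)}$ on the proper $k$-variety $X_1$, there is a positive integer $m_1$ such that $\lim_{n\to\infty} \dim_k L^{(1)}_{a+nm_1}/n^{\kappa_1}$ exists for every $a \in \NN$, where $\kappa_1 := \kappa(L^{(1)})$; the inductive hypothesis applied to $L''$ on $Y$ furnishes an analogous $m_2$ for $\kappa'' := \kappa(L'')$. Since $\kappa_1, \kappa'' \leq \kappa$ and the bound (\ref{I1}) extends to reduced proper $k$-schemes, each of $\dim_k L^{(1)}_{a+nr}/n^\kappa$ and $\dim_k L''_{a+nr}/n^\kappa$ converges along the arithmetic progression $a+nr$ (with $r := m_1 m_2$): to a nonnegative constant when the corresponding Kodaira-Iitaka dimension equals $\kappa$, and to $0$ otherwise. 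Summing closes the induction.

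The only subtlety in the reduction is the injectivity of $\rho_Y|_{L'_n}$: this is the sole place where the reducedness of $X$ enters and is what allows the dimension count to split cleanly into contributions coming from the proper variety $X_1$ and from the smaller reduced proper scheme $Y$. The substantive content of the theorem lies entirely in Theorem \ref{Theorem5} (the case of a proper $k$-variety over an arbitrary, possibly non-perfect, ground field); the proposed induction serves only to bridge from proper varieties to arbitrary reduced proper $k$-schemes.
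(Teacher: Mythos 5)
Your proposal is correct and is essentially the paper's own argument: the paper filters $L$ by the successive kernels $M^i=K(M^{i-1},X_i)$ of restriction to the irreducible components, obtains $\dim_k L_n=\sum_i \dim_k(M^{i-1}|X_i)_n$ using reducedness for injectivity of $L\rightarrow \bigoplus_i L|X_i$, and applies Theorem \ref{Theorem5} on each component with $r$ the least common multiple of the relevant indices, discarding the lower Kodaira--Iitaka terms via the growth bound. Your induction on the number of components is just the unrolled form of this filtration (with the same use of reducedness and the same handling of components with $\kappa<\kappa(L)$, including $\kappa=-\infty$), so it is the same proof in substance.
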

The theorem says that 
$$
\lim_{n\rightarrow \infty}\frac{\dim_k L_{n}}{n^{\kappa}}
$$
exists if $n$ is constrained to lie in an arithmetic sequence $a+br$ with $r$ as above, and for some fixed $a$. The conclusions of the theorem are a little weaker than the conclusions of Theorem \ref{TheoremI1} for  varieties. In particular, the index $m(L)$ has little relevance on reduced but nonirreducible schemes (as shown by the example after Theorem \ref{Theorem8} and Example 5.5 \cite{C2}).

Now we turn to the case of  nonreduced proper schemes. We begin by discussing the relationship between growth rates and the Kodaira-Iitaka dimension of graded linear series, which is much more subtle on nonreduced schemes.

A  bound (\ref{I2}) holds for any  graded linear series $L$ on a  proper $k$-scheme $X$ (Lemma \ref{LemmaKI}). 
 However, neither (\ref{I3}) nor (\ref{I1}) are true on nonreduced proper $k$-schemes. The statements must be modified
in this case to include information about the nilradical of $X$.  An upper  bound which holds on all proper $k$-schemes is obtained in Theorem \ref{Theorem8}.
 Suppose that $X$ is a  proper scheme over a field $k$. Let $\mathcal N_X$ be the nilradical of $X$. Suppose that $L$ is a graded linear series on $X$. Then
 there exists a positive constant $\gamma$ such that $\dim_kL_n<\gamma n^e$ where 
\begin{equation}\label{I4}
e=\max\{\kappa(L),\dim \mathcal N_X\}.
\end{equation}
This is the best bound possible. It is shown in Theorem \ref{TheoremN20} that if $X$ is a nonreduced projective $k$-scheme, then for any $s\in \NN\cup\{-\infty\}$ with $s\le \dim \mathcal N_X$, there exists a graded linear series $L$ on $X$ with $\kappa(L)=s$ and a constant $\alpha>0$ such that
$$
\alpha n^{\dim \mathcal N_X} < \dim_kL_n
$$
for all $n>0$.

From the bound (\ref{I4}), we see that the growth bounds (\ref{I1}) and (\ref{I2}) do in fact hold for graded linear series $L$ on a proper $k$-scheme $X$ whenever $\kappa(L)\ge \dim \mathcal N_X$.

In Example 6.3 \cite{C2} we give an example of a graded linear series $L$ of maximal Kodaira-Iitaka dimension $\kappa(L)=d=\dim X$ such that the limit   
$$
\lim_{n\rightarrow \infty}\frac{\dim_k L_{n}}{n^{d}}
$$
does not exist, even when $n$ is constrained to lie in {\it any} arithmetic sequence. However, we prove in Theorem \ref{Theorem8} of this paper that if 
$L$ is a graded linear series on a proper scheme $X$ over a field $k$, with $\kappa(L)>\dim \mathcal N_X$, then 
there exists a positive integer $r$ such that 
$$
\lim_{n\rightarrow \infty}\frac{\dim_k L_{a+nr}}{n^{\kappa}}
$$
exists for any fixed $a\in \NN$.

This is the strongest statement on limits that is true. In fact, the existence of all such limits characterizes the dimension of the nilradical, at least on projective schemes.
We show this in the following theorem.

\begin{Theorem}(Theorem \ref{TheoremN2})
Suppose that $X$ is a $d$-dimensional projective scheme over a field $k$ with $d>0$. Let $\mathcal N_X$ be the nilradical of $X$. Let $\alpha\in\NN$. Then the following are equivalent:
\begin{enumerate}
\item[1)] For every graded linear series $L$ on $X$ with $\alpha\le\kappa(L)$, there exists a positive integer $r$ such that 
$$
\lim_{n\rightarrow\infty}\frac{\dim_kL_{a+nr}}{n^{\kappa(L)}}
$$
exists for every positive integer $a$.
\item[2)] For every graded linear series $L$ on $X$ with $\alpha\le \kappa(L)$, there exists an arithmetic sequence $a+nr$ (for fixed $r$ and $a$ depending on $L$) such that 
$$
\lim_{n\rightarrow\infty}\frac{\dim_kL_{a+nr}}{n^{\kappa(L)}}
$$
exists.
\item[3)] The nilradical $\mathcal N_X$ of $X$ satisfies $\dim \mathcal N_X<\alpha$.
\end{enumerate}
\end{Theorem}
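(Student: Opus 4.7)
The implication (1) $\Rightarrow$ (2) is trivial. For (3) $\Rightarrow$ (1): if $\dim \mathcal{N}_X < \alpha \le \kappa(L)$ then $\kappa(L) > \dim \mathcal{N}_X$, which is precisely the hypothesis of Theorem \ref{Theorem8}; that theorem then provides the positive integer $r$ such that the limit along $a+nr$ exists for every $a \in \NN$. No new work is needed here.

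The substantive direction is (2) $\Rightarrow$ (3), which I prove by contrapositive: assuming $\dim \mathcal{N}_X \ge \alpha$, I construct a graded linear series $L$ on $X$ with $\kappa(L) \ge \alpha$ for which $\dim_k L_{a+nr}/n^{\kappa(L)}$ fails to converge along \emph{every} arithmetic sequence $\{a+nr\}$. The model is Example 6.3 of \cite{C2}, which exhibits such an $L$ when $X$ is nonreduced with $\dim \mathcal{N}_X = \dim X$ and $\kappa(L) = \dim X$. My plan is to adapt that construction to the present generality.

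Set $e = \dim \mathcal{N}_X \ge \alpha$ and fix an ample $\mathcal{O}_X(1)$; then $h^0(X, \mathcal{N}_X \otimes \mathcal{O}_X(n))$ grows as a polynomial of degree $e$ in $n$. First I fix a base graded subalgebra $B$ of $\bigoplus_n H^0(X, \mathcal{O}_X(n))$ of Kodaira--Iitaka dimension exactly $\alpha$, obtained from an ample subseries on $X_{\mathrm{red}}$ when $\alpha \le \dim X_{\mathrm{red}}$, and from the nilpotent construction of Theorem \ref{TheoremN20} otherwise. I partition $\NN$ into blocks of rapidly increasing factorial length, and on the $k$-th block I enlarge $B_n$ by either a \emph{wide} or a \emph{narrow} subspace of $H^0(X, \mathcal{N}_X \otimes \mathcal{O}_X(n))$ depending on the parity of $k$, thereby defining the candidate $L_n$. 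The wide--narrow gap is of order $n^e \ge n^{\kappa(L)}$; since block lengths eventually dwarf any fixed $r$, every progression $a+nr$ meets blocks of both parities infinitely often, and hence $\dim_k L_{a+nr}/n^{\kappa(L)}$ has two distinct positive accumulation points and fails to converge.

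The main obstacle is arranging that $\{L_n\}$ is closed under the multiplication map of the section ring without destroying either the oscillation or the prescribed value of $\kappa(L)$. Block boundaries do not respect addition, so $L_n$ will have to be enlarged in transition zones to absorb all earlier products $L_m \cdot L_{n-m}$; growing the blocks rapidly enough keeps those enlargements lower-order and preserves the oscillation. Likewise, forcing $\kappa(L) = \alpha$ rather than letting it drift up to $e$ requires truncating the nilpotent contributions so that their image under the Kodaira--Iitaka map is contained in that of $B$. Handling this combinatorial bookkeeping --- simultaneously achieving multiplicative closure, controlled Kodaira--Iitaka dimension, and oscillation on every arithmetic progression --- is the principal new work beyond Example 6.3 of \cite{C2}.
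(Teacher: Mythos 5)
Your skeleton matches the paper's: 1)$\Rightarrow$2) is trivial, 3)$\Rightarrow$1) is exactly Theorem \ref{Theorem8}, and 2)$\Rightarrow$3) must be proved by exhibiting, whenever $\dim\mathcal N_X\ge\alpha$, a graded linear series $L$ with $\kappa(L)=\alpha$ whose normalized dimensions converge along no arithmetic progression. But that exhibition is precisely where your proposal stops: you describe a plan (parity-alternating ``wide'' and ``narrow'' subspaces of $H^0(X,\mathcal N_X\otimes\mathcal O_X(n))$ on rapidly growing blocks, to be patched up later so that $L_mL_n\subset L_{m+n}$ and so that $\kappa(L)$ does not drift upward) and then explicitly defer ``the principal new work.'' That deferred work is the actual content of the implication, and your proposed patch is not shown to succeed. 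The tension is real: once a wide space $W_m$ (of dimension of order $m^e$, $e=\dim\mathcal N_X$) has appeared, multiplicative closure forces $L_n$ to contain $B_{n-m}\cdot W_m$ for all later $n$, including $n$ in your narrow blocks, and you give no estimate showing that these forced enlargements stay of strictly smaller order than $n^e$; if they do not, the oscillation you are counting on disappears. ``Enlarging in transition zones'' does not address this, because the absorption requirement propagates through every later degree, not just near block boundaries. (By contrast, your second worry is not actually an obstacle: since every section you add lies in $\Gamma(X,\mathcal N_X\otimes\mathcal O_X(n))$, it is nilpotent in the section ring, and Lemma \ref{Lemmanr90} already gives $\kappa(L)=\kappa(L|X_{\rm red})=\kappa(B)$, so no truncation is needed to keep $\kappa(L)=\alpha$.)

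The paper closes exactly this gap in Theorem \ref{TheoremN1} by making the nilpotent part monomial rather than block-defined. One passes to a saturated homogeneous coordinate ring $S$, takes a top-dimensional component $Y$ of the support of $\mathcal N_X$ with homogeneous prime $P_Y$, and chooses a homogeneous $x$ in the nilradical of $S$ with $\operatorname{ann}_S(x)=P_Y$; then $x^2=0$ and $x$ is a free generator over a weighted polynomial ring $A=k[z_0,\dots,z_r]$ lifting a homogeneous system of parameters of $S/P_Y$. Setting $L_n=z_0^{nf}M^s_{nf}+x\,z_0^{(n-\sigma(n))f-e}M^r_{(n+\sigma(n))f}$ with $\sigma$ the nondecreasing step function of (\ref{eqsigma}), multiplicativity is automatic because $\sigma$ is monotone and $x^2=0$ kills products of nilpotent parts, freeness gives the exact count $\dim_kL_n=Q_s(n)+Q_r(n+\sigma(n))$, and non-convergence along every arithmetic progression comes from $\sigma(n)/n$ having no such limit; the point is that the ``width'' parameter $n+\sigma(n)$ is monotone while only the ratio $\sigma(n)/n$ oscillates, which is what reconciles closure under multiplication with the oscillation your parity scheme tries to create by letting the width itself jump down. (The cases $r=0$ and $s=-\infty$ are handled separately with the function $\tau$.) Taking $s=\alpha\le\dim\mathcal N_X$ in Theorem \ref{TheoremN1} then contradicts 2): if $\alpha=\dim\mathcal N_X$ the ratio by $n^{\kappa(L)}$ oscillates, and if $\alpha<\dim\mathcal N_X$ it diverges, so in either case no finite limit exists along any progression. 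Until you supply an argument of comparable precision for your block construction --- in particular a proof that the forced products from earlier wide blocks do not fill up the narrow blocks to order $n^e$ --- the implication 2)$\Rightarrow$3) remains unproved in your write-up.
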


If $X$ is a $k$-scheme of  dimension $d=0$ which is not irreducible, then the conclusions of Theorem \ref{TheoremN2} are true for $X$. This follows from Section \ref{SecZero}. However, 2) implies 3)  does not hold if $X$ is irreducible of dimension 0. In fact (Proposition \ref{TheoremN3}) if $X$ is irreducible of dimension 0,
and $L$ is a graded linear series on $X$ with $\kappa(L)=0$,  then there exists a positive integer $r$ such that the limit 
$\lim_{n\rightarrow \infty}\dim_kL_{a+nr}$ exists for every positive integer $a$.

\subsection{The method} Now we present a method to reduce problems on computing limits of lengths of graded families of modules to problems
on integral semigroups, so that the techniques for counting points in integral semigroups developed by Khovanskii \cite{Kh} and Kaveh and Khovanskii \cite{KK}
are applicable. The method  appears in work of Okounkov \cite{Ok} and has been refined by Lazarsfeld and Mustata \cite{LM} and Kaveh and Khovanskii \cite{KK}, to apply to graded linear series on projective varieties over an algebraically closed field $k$. In these papers, (using the notation below) $F$ is an algebraic function field of a $d$-dimensional variety, $\nu$ is a rank $d$ valuation, $R=k$ and the residue field $V_{\nu}/m_R=k$ (this condition on the valuation is  called having ``one dimensional leaves'' in \cite{KK}). The $M_i$ are a graded linear series, so that $\dim_k M_i<\infty$ for all $i$. 

The method is refined in \cite{C1} to include the case when $R$ is an analytically irreducible local ring, but still with the restriction that $[V_{\nu}/m_{\nu}:k]=1$ (with $k=R/m_R$). Now this is a major restriction. In fact, even in the context of graded linear series on a projective variety,
there exists such a valuation with $V_{\nu}/m_{\nu}$ equal to the ground field $k$ if and only if there exists a 
birational morphism $X'\rightarrow X$ and a nonsingular (regular) $k$-rational point $Q'\in X'$ (Proposition 4.2 \cite{C2}). 

In \cite{C1}, the restriction that $R$ be analytically irreducible is necessary to ensure that the topologies defined by the $m_R$-adic topology and the
filtration defined by the ideals $\{R\cap K_{\lambda}\}$, where $K_{\lambda}$ is the ideal in $V_{\nu}$ of elements of value $\ge \lambda$, are linearly equivalent. This is necessary to calculate the $\beta$ below when approximating lengths of quotients by graded families of $m_R$-primary ideals.

In this paper we generalize the method to work whenever $[V_{\nu}/m_{\nu}:k]<\infty$. This is sufficiently general to allow for very wide applicability. 
For instance, every projective variety over a field $k$ has a closed point $Q$ such that $\mathcal O_{X,Q}$ is regular, and certainly 
$$
[k(Q):k]<\infty.
$$

In our papers \cite{C1} and \cite{C2}, we use  base extension to reduce to the case where $k=V_{\nu}/m_{\nu}$. As discussed earlier, limits do not always exist when there are nilpotents.
Thus base change cannot be used in the case when the extension $V_{\nu}/m_{\nu}$ is not separable over $k$. In particular, even for the case of graded linear series,  this generalization is required to obtain results over
arbitrary (possibly nonperfect) ground fields.

Now we present an outline of the method. It is presented in detail in the body of the paper where it is  applied.
Suppose that $F$ is a field, and $\nu$ is a valuation of $F$ with valuation ring $V_{\nu}$ and value group $\Gamma_{\nu}=\ZZ^d$ with some total order
(in the case of graded families of $m_R$-primary ideals we take $\Gamma_{\nu}$ to be an ordered subgroup of the real numbers, and in the case of graded linear series on a projective variety  we take $\Gamma_{\nu}$ to be $\ZZ^d$ with the lex order).
Suppose that $R\subset V_{\nu}$ is a subring such that $V_{\nu}$ dominates $R$, and $[V_{\nu}/m_{\nu}:k]<\infty$, where $k=R/m_R$.
Suppose that $M_i\subset V_{\nu}$ are $R$-modules such that $M_iM_j\subset M_{i+j}$ for all $i,j$.
For all $\vec n\in \ZZ^d$, define ideals in $V_{\nu}$ by
$$
K_{\vec n} = \{f\in V_{\nu}\mid \nu(f)\ge \vec n\},
$$
$$
K_{\vec n}^+ = \{f\in V_{\nu}\mid \nu(f)> \vec n\}.
$$
Suppose that $\beta \in\Gamma_{\nu}$ is such that the length
$$
\ell_R(M_i/M_i\cap K_{\beta i})<\infty
$$
for all $i$. For $1\le i\le [V_{\nu}/m_{\nu}:k]$, define subsets of $\ZZ^{d+1}$ by
$$
S(M)^{(t)} = \{ (\overline n,i)\in \ZZ^{d+1}\mid \dim_k M_i\cap K_{\overline n}/M_i\cap K_{\overline n}^+\ge t\}.
$$
The $S(M)^{(t)}$ are in fact subsemigroups of $\ZZ^{d+1}$. Letting $S(M)^{(t)}_i=S(M)^{(t)}\cap (\ZZ^d\times\{i\})$, we compute the length
$$
\ell_R(M_i/M_i\cap K_{\beta i})=\sum_{t=1}^{[V_{\nu}/m_{\nu}:k]}\#(S(M)_i^{(t)}).
$$
This converts the problem of computing lengths into a problem of computing integral points of integral semigroups.

\section{notation and conventions}\label{SecNot} $m_R$ will denote the maximal ideal of a local ring $R$. $Q(R)$ will denote the quotient field of a domain $R$.
$\ell_R(N)$ will denote the length of an $R$-module $N$.  $\ZZ_+$ denotes the positive integers and $\NN$ the nonnegative integers. 
Suppose that $x\in \RR$. $\lceil x \rceil$ is the smallest integer $n$ such  that $x\le n$. $\lfloor x \rfloor$ is the largest integer $n$ such that $n\le x$. 

We recall some notation on multiplicity from Chapter VIII, Section 10 of \cite{ZS2}, Section V-2 \cite{Se} and Section 4.6 \cite{BH}.
Suppose that $(R,m_R)$ is a (Noetherian) local ring,  $N$ is a finitely generated $R$-module with $r=\dim N$ and $a$ is an ideal of definition of $R$. Then
$$
e_a(N)=\lim_{k\rightarrow \infty}\frac{\ell_R(N/a^kN)}{k^r/r!}.
$$
We write $e(a)=e_a(R)$.

If $s\ge r=\dim N$, then we define 
$$
e_s(a,N)=\left\{
\begin{array}{ll}
 e_a(N)&\mbox{ if }\dim N=s\\
 0&\mbox{ if } \dim N<s.
 \end{array}\right.
 $$

A local ring is analytically unramified if its completion is reduced. In particular, a reduced excellent local ring is 
analytically unramified.

 We will denote the maximal ideal of a local ring $R$ by $m_R$. If $\nu$ is a valuation of a field $K$, then we will write $V_{\nu}$ for the valuation ring of $\nu$, and $m_{\nu}$ for the maximal ideal of $V_{\nu}$. We will write $\Gamma_{\nu}$ for the value group of $\nu$. If $A$ and $B$ are local rings, we will say that $B$ dominates $A$ if $A\subset B$ and $m_B\cap A=m_A$.
 
 We  use the notation of Hartshorne \cite{H}. For instance, a variety is required to be integral.
If $\mathcal F$ is a coherent sheaf on a Noetherian scheme, then $\dim \mathcal F$ will denote the dimension of the support of $\mathcal F$, with  $\dim \mathcal F=-\infty$ if $\mathcal F=0$.

Suppose that $X$ is a scheme. The nilradical of $X$ is the ideal sheaf $\mathcal N_X$ on $X$ which is the kernel of the natural surjection
$\mathcal O_{X}\rightarrow \mathcal O_{X_{\rm red}}$ where $X_{\rm red}$ is the reduced scheme associated to $X$. $(\mathcal N_X)_\eta$ is the nilradical of the local ring $\mathcal O_{X,\eta}$ for all $\eta\in X$.

\section{Cones associated to semigroups}\label{SecCone}

In this section, we summarize some results of Okounkov \cite{Ok}, Lazarsfeld and Mustata \cite{LM} and Koveh and Khovanskii \cite{KK}. 

Suppose that $S$ is a subsemigroup of $\ZZ^{d}\times \NN$ which is not contained in $\ZZ^d\times\{0\}$. Let $L(S)$ be the subspace of $\RR^{d+1}$ which is generated by $S$, and let $M(S)=L(S)\cap(\RR^d\times\RR_{\ge 0})$. 

Let $\mbox{Con}(S)\subset L(S)$ be the closed convex cone which is the closure of  the set of all linear combinations $\sum \lambda_is_i$ with $s_i\in S$ and $\lambda_i\ge 0$.

$S$ is called {\it strongly nonnegative} (Section 1.4 \cite{KK}) if $\mbox{Cone}(S)$  intersects $\partial M(S)$ only at the origin (this is equivalent to being strongly admissible (Definition 1.9 \cite{KK}) since with our assumptions, $\mbox{Cone}(S)$ is contained in $\RR^d\times\RR_{\ge 0}$,  so the ridge of of $S$ must be contained in $\partial M(S)$). In particular, a subsemigroup of a strongly negative semigroup is itself strongly negative.

We now introduce some notation from \cite{KK}. Let 
\vskip .1truein

$S_k=S\cap (\RR^d\times\{k\})$.

$\Delta(S)=\mbox{Con}(S)\cap (\RR^{d}\times\{1\})$ (the Newton-Okounkov body of $S$).

$q(S)=\dim \partial M(S)$.

$G(S)$ be the subgroup of $\ZZ^{d+1}$ generated by $S$.

$m(S)=[\ZZ:\pi(G(S))]$
  where $\pi:\RR^{d+1}\rightarrow \RR$ be projection onto the last factor.

$\mbox{ind}(S)= [\partial M(S)_{\ZZ}:G(S)\cap \partial M(S)_{\ZZ}]$
where 

$\partial M(S)_{\ZZ}:=\partial M(S)\cap \ZZ^{d+1}= M(S)\cap (\ZZ^d\times\{0\})$.

${\rm vol}_{q(S)}(\Delta(S))$ is the integral volume of $\Delta(S)$. This volume is computed using the translation of the integral measure on $\partial M(S)$.
\vskip .2truein

$S$ is strongly negative if and only if $\Delta(S)$ is a compact set. If $S$ is strongly negative, then the dimension of $\Delta(S)$ is $q(S)$.
\vskip .1truein

\begin{Theorem}\label{ConeTheorem3}(Kaveh and Khovanskii) Suppose that $S$ is strongly nonnegative.  Then 
$$
\lim_{k\rightarrow \infty}\frac{\#S_{m(S)k}}{k^{q(S)}}=\frac{{\rm vol}_{q(S)}(\Delta(S))}{{\rm ind}(S)}.
$$
\end{Theorem}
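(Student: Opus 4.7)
The plan is to prove this by converting the combinatorial count of $\#S_{m(S)k}$ to a lattice-point count in a convex body, then applying a standard asymptotic lattice-point-counting estimate. First I would pass to the subspace $L(S)=G(S)\otimes_{\ZZ}\RR$ and work inside the rank-$(d+1)$ lattice $G(S)$, so that $S$ generates a full-rank sublattice of the ambient lattice. Under this reduction, the projection $\pi\colon \RR^{d+1}\to \RR$ restricted to $G(S)$ has image $m(S)\ZZ$, and the nonzero fibers $G(S)\cap \pi^{-1}(m(S)k)$ are cosets of the sublattice $G(S)\cap \partial M(S)_{\ZZ}$ sitting inside the integer slice $\partial M(S)_{\ZZ}$ with index $\mathrm{ind}(S)$.

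Next I would establish the key semigroup-filling statement (the hard part, see below): for strongly nonnegative $S$ there is a bounded region $B\subset L(S)$, of dimension strictly less than $q(S)+1$, such that
$$
\bigl(G(S)\cap \mathrm{Con}(S)\bigr)\setminus S \ \subset\ B+\partial M(S).
$$
In other words, every lattice point of the cone that lies sufficiently far from the boundary face $\partial M(S)$ is actually in $S$. This is Khovanskii's theorem on the asymptotic approximation of a finitely generated (or, with appropriate hypotheses, general) semigroup by the lattice points of its cone, and it is the conceptual heart of the argument.

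Granting this, I would then dilate by $k$ and slice at height $m(S)k$. On the one hand,
$$
S_{m(S)k}\ \subset\ G(S)\cap k\Delta(S)\cap \pi^{-1}(m(S)k),
$$
which has cardinality asymptotically $\frac{\mathrm{vol}_{q(S)}(k\Delta(S))}{\mathrm{ind}(S)}=\frac{k^{q(S)}\mathrm{vol}_{q(S)}(\Delta(S))}{\mathrm{ind}(S)}$ by the standard Minkowski/Ehrhart-type estimate for lattice points in a dilating compact convex body (using that $\Delta(S)$ is compact, since $S$ is strongly nonnegative, and $q(S)$-dimensional). On the other hand, the filling statement shows that the complement $\bigl(G(S)\cap k\Delta(S)\cap \pi^{-1}(m(S)k)\bigr)\setminus S_{m(S)k}$ lies in a neighborhood of $\partial(k\Delta(S))$ of thickness $O(1)$, hence contributes only $O(k^{q(S)-1})=o(k^{q(S)})$ lattice points. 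Dividing by $k^{q(S)}$ and passing to the limit yields the claimed equality.

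The main obstacle is the semigroup-filling statement; the rest is a routine convex-geometric count. The filling step requires the strong-nonnegativity hypothesis in an essential way: it is what guarantees that $\mathrm{Con}(S)$ touches the boundary hyperplane $\RR^d\times\{0\}$ only at the origin, so that every ``direction of escape'' to infinity in $\mathrm{Con}(S)$ lies in the interior of the cone, where one can iterate addition of semigroup generators and cover the distant lattice points. Without this, the defect set could have dimension $q(S)+1$ and spoil the volume estimate. Once one has the filling lemma, the combinatorial statement falls out of standard lattice-point asymptotics, with the constant $\mathrm{ind}(S)$ appearing precisely because the nonzero fibers of $\pi|_{G(S)}$ are cosets of a sublattice of covolume $\mathrm{ind}(S)$ inside the measure-defining lattice $\partial M(S)_{\ZZ}$.
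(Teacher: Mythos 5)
The paper does not actually prove this statement -- it is quoted directly from Corollary 1.16 of \cite{KK} -- so your sketch has to stand on its own, and its central lemma is false as stated. The ``semigroup-filling'' inclusion $\bigl(G(S)\cap\mathrm{Con}(S)\bigr)\setminus S\subset B+\partial M(S)$ with $B$ bounded asserts that every lattice point of the cone of sufficiently large height (large last coordinate) lies in $S$. That is not Khovanskii's approximation theorem, and it fails even for finitely generated strongly nonnegative semigroups: let $S\subset\ZZ\times\NN$ be generated by $(0,1)$, $(3,1)$, $(1,2)$. Then $G(S)=\ZZ^{2}$, $\mathrm{Con}(S)$ is the cone spanned by $(0,1)$ and $(3,1)$, and $S$ is strongly nonnegative; but $(3k-1,k)\in G(S)\cap\mathrm{Con}(S)$ is not in $S$ for any $k\ge 1$ (writing $(3k-1,k)=a(0,1)+b(3,1)+c(1,2)$ forces $3b+c=3k-1$ and $a=5b-5k+2$, so $a\ge 0$ gives $b\ge k$ and then $c<0$). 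Thus the defect set contains points of arbitrarily large height and lies in no bounded-height slab $B+\partial M(S)$. The correct statement confines the defect to a neighborhood of the topological boundary of $\mathrm{Con}(S)$ (the side faces of the cone, which meet every slice $\pi^{-1}(m(S)k)$); and the role of strong nonnegativity is not to produce this filling but to make $\Delta(S)$ compact, so that the slices and their volumes are finite.

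This matters for the rest of your count. Your claim that the defect in the slice at height $m(S)k$ has thickness $O(1)$, hence contributes $O(k^{q(S)-1})$ points, is the finitely generated version of Khovanskii's theorem; but the theorem here is for arbitrary strongly nonnegative subsemigroups of $\ZZ^{d}\times\NN$, and the semigroups the paper feeds into it (coming from graded linear series and graded families of ideals) are typically not finitely generated. For such $S$ the missing lattice points near the side boundary need not be confined to a layer of bounded thickness, so the $O(k^{q(S)-1})$ estimate is not available. What Kaveh--Khovanskii actually prove and use is: for any closed strongly convex cone $C\subset\mathrm{Con}(S)$ meeting the boundary of $\mathrm{Con}(S)$ only at the origin, all points of $G(S)\cap C$ sufficiently far from the origin lie in $S$; one then sandwiches $\#S_{m(S)k}$ between the coset counts for slices of such interior subcones and of $\mathrm{Con}(S)$ itself, the difference being at most $\epsilon k^{q(S)}$ for $C$ close to $\mathrm{Con}(S)$, and lets $\epsilon\to 0$. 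Your lattice bookkeeping (the nonzero fibers being cosets of $G(S)\cap\partial M(S)_{\ZZ}$, the index $\mathrm{ind}(S)$, compactness and $q(S)$-dimensionality of $\Delta(S)$) is correct, modulo one normalization to watch: the slice of $\mathrm{Con}(S)$ at height $m(S)k$ is the $m(S)k$-fold dilate of the level-one slice, so writing it as $k\Delta(S)$ presupposes the Kaveh--Khovanskii convention for $\Delta(S)$ rather than the level-one slice. But the conceptual heart of the argument -- the approximation step -- must be restated and proved in the subcone form above; as written, it is a false statement from which the rest of your estimate is derived.
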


This is proven in  Corollary 1.16 \cite{KK}. 

With our assumptions, we have that $S_n=\emptyset$ if $m(S)\not\,\mid n$ and  the limit is positive, since
${\rm vol}_{q(S)}(\Delta(S))>0$.

\begin{Theorem}\label{ConeTheorem1}(Okounkov,  Section 3 \cite{Ok}, Lazarsfeld and Mustata, Proposition 2.1 \cite{LM}) Suppose that a subsemigroup $S$ of  $\ZZ^d\times\ZZ_{\ge 0}$  satisfies the following two conditions:
\begin{equation}\label{Cone2}
\begin{array}{l}
\mbox{There exist finitely many vectors $(v_i,1)$ spanning a semigroup $B\subset\NN^{d+1}$}\\
\mbox{such that $S\subset B$}
\end{array}
\end{equation}
and
\begin{equation}\label{Cone3}
G(S)=\ZZ^{d+1}.
\end{equation}
Then
$$
\lim_{n\rightarrow\infty} \frac{\# S_n}{n^d}={\rm vol}(\Delta(S)).
$$
\end{Theorem}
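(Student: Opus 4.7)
The plan is to derive Theorem \ref{ConeTheorem1} as a direct specialization of the Kaveh-Khovanskii Theorem \ref{ConeTheorem3}. Under the hypotheses (\ref{Cone2}) and (\ref{Cone3}), I will verify that $S$ is strongly nonnegative, and then compute that the invariants $m(S)$, $q(S)$, $\mbox{ind}(S)$ appearing in the Kaveh-Khovanskii formula collapse in such a way that the conclusion reduces exactly to $\lim \#S_n/n^d = {\rm vol}(\Delta(S))$.

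The first step is to read off the invariants from (\ref{Cone3}). Since $G(S)=\ZZ^{d+1}$, we have $L(S)=\RR^{d+1}$, so $M(S)=\RR^d\times\RR_{\ge 0}$ and $\partial M(S)=\RR^d\times\{0\}$; in particular $q(S)=d$. Projecting onto the last factor yields $\pi(G(S))=\ZZ$, hence $m(S)=1$. Further $\partial M(S)_{\ZZ}=\ZZ^d\times\{0\}\subset G(S)$, so $\mbox{ind}(S)=1$. Under the obvious identification of $\RR^d\times\{0\}$ with $\RR^d$, the integral measure on $\partial M(S)$ is the standard Lebesgue measure on $\RR^d$, and translating by the last basis vector identifies $\Delta(S)\subset \RR^d\times\{1\}$ with its image in $\RR^d$, so ${\rm vol}_{q(S)}(\Delta(S))={\rm vol}(\Delta(S))$. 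The second step is to check strong nonnegativity using (\ref{Cone2}): any element of $\mbox{Con}(B)$ has the form $\sum \lambda_i(v_i,1)$ with $\lambda_i\ge 0$, and its last coordinate equals $\sum \lambda_i$, which vanishes only if every $\lambda_i=0$. Hence $\mbox{Con}(B)\cap \partial M(S)=\{0\}$, and a fortiori $\mbox{Con}(S)\cap\partial M(S)=\{0\}$, because $\mbox{Con}(S)\subset \mbox{Con}(B)$. Theorem \ref{ConeTheorem3} then applies and gives
$$
\lim_{n\to\infty}\frac{\#S_n}{n^d}=\frac{{\rm vol}_{q(S)}(\Delta(S))}{\mbox{ind}(S)}={\rm vol}(\Delta(S)).
$$

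The argument has essentially no obstacle beyond this bookkeeping: all of the analytic content — approximating $\#S_n$ by the number of lattice points of $\mbox{Con}(S)$ at height $n$ and passing to the volume of the Newton-Okounkov body — is absorbed into Theorem \ref{ConeTheorem3}. The only point requiring genuine care is the matching of normalizations: one must be sure that the integral volume on $\partial M(S)$ used by Kaveh and Khovanskii agrees with ordinary Lebesgue measure on $\RR^d$, so that the factor $\mbox{ind}(S)^{-1}$ really is $1$ and the limit comes out equal to ${\rm vol}(\Delta(S))$ on the nose rather than up to a lattice correction.
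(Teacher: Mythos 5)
Your proposal is correct and follows essentially the same route as the paper: both verify that $S$ is strongly nonnegative via the containment $S\subset B$ from (\ref{Cone2}), then use $G(S)=\ZZ^{d+1}$ to compute $q(S)=d$, $m(S)=1$, ${\rm ind}(S)=1$ and apply Theorem \ref{ConeTheorem3}. Your extra remarks on the volume normalization and on why $\mbox{Con}(B)$ meets $\RR^d\times\{0\}$ only at the origin just make explicit what the paper leaves implicit.
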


\begin{proof} 
$S$ is strongly nonnegative since $B$ is strongly nonnegative, so Theorem \ref{ConeTheorem3} holds.

$G(S)=\ZZ^{d+1}$ implies $L(S)=\RR^{d+1}$, so $M(S)=\RR^d\times\RR_{\ge 0}$, $\partial M(S)=\RR^d\times\{0\}$ and 
$q(S)=\dim \partial M(S)=d$. We thus have  $m(S)=1$ and $\mbox{ind}(S)=1$. 
\end{proof}

\begin{Theorem}\label{ConeTheorem4} Suppose that $S$ is  strongly nonnegative. Fix $\epsilon>0$. Then there is an integer $p=p_0(\epsilon)$ such that if $p\ge p_0$, then the limit
$$
\lim_{n\rightarrow\infty}\frac{\#(n*S_{pm(S)})}{n^{q(S)}p^{q(S)}}\ge \frac{{\rm vol}_{q(S)}\Delta(S)}{{\rm ind}(S)}-\epsilon
$$
exists, where
$$
n*S_{pm(S)}=\{x_1+\cdots+x_n\mid x_1,\ldots,x_n\in S_{pm(S)}\}.
$$
\end{Theorem}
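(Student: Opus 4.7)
The plan is to realize $n*S_{pm(S)}$ as the level-$n$ slice of an auxiliary subsemigroup $T^{(p)}\subset\ZZ^d\times\NN$, apply Theorem \ref{ConeTheorem3} to $T^{(p)}$, and then compare the invariants $q(T^{(p)})$, $\mathrm{ind}(T^{(p)})$, and $\mathrm{vol}_{q(T^{(p)})}\Delta(T^{(p)})$ to their $S$-counterparts for $p$ large. Since $S$ is strongly nonnegative, $\Delta(S)$ is compact and each slice $S_r$ is finite. Let $T^{(p)}$ be the subsemigroup of $\ZZ^d\times\NN$ generated by $\{(s,1)\mid s\in S_{pm(S)}\}$, so that $T^{(p)}_n=n*S_{pm(S)}$. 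Then $T^{(p)}$ is strongly nonnegative, $\Delta(T^{(p)})=\mathrm{conv}(S_{pm(S)})$, and $m(T^{(p)})=1$ once $S_{pm(S)}\ne\emptyset$; the latter holds for all sufficiently large $p$, since $\{k\in\NN\mid S_{km(S)}\ne\emptyset\}$ is a subsemigroup of $\NN$ whose group is $\ZZ$ (by the definition of $m(S)$) and so has cofinite complement. Theorem \ref{ConeTheorem3} therefore supplies the limit $\lim_n\#T^{(p)}_n/n^{q(T^{(p)})}=\mathrm{vol}_{q(T^{(p)})}\Delta(T^{(p)})/\mathrm{ind}(T^{(p)})$.

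The key tool for the comparison is the \emph{filler construction}: given $x\in S_n$ with $pm(S)-n$ in the cofinite set of nonempty levels of $S$, pick any $t\in S_{pm(S)-n}$; then $x+t\in S_{pm(S)}$, and $(x+t)/(pm(S))$ is a convex combination of $x/n\in\Delta(S)$ and $t/(pm(S)-n)\in\Delta(S)$ with weights $n/(pm(S))$ and $1-n/(pm(S))$. With this I would establish three stabilizations for $p$ past a threshold. First, $q(T^{(p)})=q(S)$: the scaled body $C_p:=(pm(S))^{-1}\Delta(T^{(p)})=\mathrm{conv}(S_{pm(S)}/(pm(S)))$ lies in $\Delta(S)$, and applying the filler construction to $q(S)+1$ affinely independent rational points of $\Delta(S)$ produces $q(S)+1$ affinely independent points in $C_p$. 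Second, $\mathrm{ind}(T^{(p)})=\mathrm{ind}(S)$: for generators $s_i-s_i'$ of $G(S)\cap\partial M(S)_{\ZZ}$ with $s_i,s_i'\in S_{n_i}$, adding a common filler $t_i\in S_{pm(S)-n_i}$ gives $s_i-s_i'=(s_i+t_i)-(s_i'+t_i)\in G(T^{(p)})\cap\partial M(T^{(p)})_{\ZZ}$, so the latter contains $G(S)\cap\partial M(S)_{\ZZ}$; the reverse inclusion is automatic from $(s,pm(S))-(s',pm(S))=s-s'\in G(S)$, and the first stabilization yields $\partial M(T^{(p)})_{\ZZ}=\partial M(S)_{\ZZ}$. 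Third, the volume convergence $\mathrm{vol}_{q(S)}C_p\to\mathrm{vol}_{q(S)}\Delta(S)$: using $\Delta(S)=\overline{\mathrm{conv}(\bigcup_k S_k/k)}$, pick finitely many rational points whose convex hull approximates $\Delta(S)$ in $q(S)$-volume within $\epsilon/2$, approximate each by a point of $C_p$ within $O(1/p)$ via the filler construction, and conclude by continuity of volume under perturbation.

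Combining these, for $p\ge p_0(\epsilon)$,
\[
\lim_{n\to\infty}\frac{\#(n*S_{pm(S)})}{n^{q(S)}p^{q(S)}}=\frac{\mathrm{vol}_{q(S)}\Delta(T^{(p)})}{p^{q(S)}\mathrm{ind}(T^{(p)})}=m(S)^{q(S)}\frac{\mathrm{vol}_{q(S)}C_p}{\mathrm{ind}(S)}\ge\frac{\mathrm{vol}_{q(S)}\Delta(S)}{\mathrm{ind}(S)}-\epsilon,
\]
since $m(S)^{q(S)}\ge 1$. The main obstacle is running the filler approximation \emph{uniformly} in $p$: convergence of $\mathrm{vol}_{q(S)}C_p$ to $\mathrm{vol}_{q(S)}\Delta(S)$ along divisibility-compatible subsequences is immediate because $C_{p'}\subset C_p$ when $p'\mid p$, but producing a point of $C_p$ close to a prescribed $P=s/k\in\Delta(S)$ for every large $p$ requires Euclidean-dividing $pm(S)=Nk+r$ and replacing $N$ by $N-t$ with $t$ bounded but large enough that $r+tk$ falls in the cofinite set of nonempty levels of $S$; the resulting weight $(N-t)k/(pm(S))$ on $P$ is $1-O(1/p)$, so the resulting lattice point is within $O(1/p)$ of $P$ and the approximation holds for all $p\ge p_0$.
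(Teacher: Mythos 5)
Your proposal is correct and takes essentially the same route as the paper: the paper's proof works with the subsemigroup $S^{[pm]}\subset S$ generated by $S_{pm(S)}$ (your $T^{(p)}$ is just this semigroup regraded to level one), applies Theorem \ref{ConeTheorem3} to it, and stabilizes the index and the volume for $p\gg 0$ by exactly your padding/filler idea with generators of $G(S)\cap\partial M(S)_{\ZZ}$ written as differences at a common level. The only differences are cosmetic: regrading makes $m(T^{(p)})=1$, and you spell out the stabilizations $q(S^{[pm]})=q(S)$ and ${\rm vol}_{q}(\Delta(S^{[pm]}))/p^{q}\rightarrow {\rm vol}_{q}(\Delta(S))$ that the paper asserts without proof.
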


\begin{proof} Let $m=m(S)$ and $q=q(S)$. Let $S^{[pm]}=\cup_{n=1}^{\infty} (n*S_{pm(S)})$ be the subsemigroup of $S$ generated by $S_{pm}$. For $p\gg 0$, we have that $L(S^{[pm]})=L(S)$ so $m(S^{[pm]})=pm$ and $q(S^{[pm]})=q$. 

Suppose that $v_1,\ldots,v_r$ generate $G(S)\cap \partial M(S)_{\ZZ}$. For $1\le i\le r$, there exist $a_i,b_i,n_i$ such that
$v_i=(a_i,n_im)-(b_i,n_im)$ with $(a_i,n_im), (b_i,n_im)\in S_{n_im}$. There exist $b>0$ and $c,c'$ such that $(c,mb) \in S$ and $(c',m(b+1))\in S$.
$bm$ divides $n_im+n_i(b-1)(b+1)m$ and
$$
v_i=[(a_i,n_im)+n_i(b-1)(c',(b+1)m)]-[(b_i,n_im)+n_i(b-1)(c',(b+1)m)],
$$
so we may assume that $b$ divides $n_i$ for all $i$. Thus $v_1,\ldots,v_r\in G(S^{[nm]})$ where $n=\max\{n_i\}$, and $v_1,\ldots,v_r\in G(S^{[pm]})$ whenever $p\ge (b-1)b+n$. Thus
\begin{equation}\label{eqnr70}
{\rm ind}(S^{[pm]})={\rm ind}(S)
\end{equation}
whenever $p\gg 0$.
We have that
\begin{equation}\label{eqnr71}
\lim_{p\rightarrow\infty} \frac{{\rm vol}_q(\Delta(S^{[pm]}))}{p^q}={\rm vol}_q(\Delta(S).
\end{equation}
By Theorem \ref{ConeTheorem3},
\begin{equation}\label{eqnr72}
\lim_{n\rightarrow \infty}\frac{\#(n*S_{pm})}{n^q}=\frac{{\rm vol}_q(\Delta(S^{[pm]}))}{{\rm ind}(S^{[pm]})}.
\end{equation}
The theorem now follows from (\ref{eqnr70}), (\ref{eqnr71}), (\ref{eqnr72}).
 \end{proof}
 
 We obtain the following result.

\begin{Theorem}\label{ConeTheorem2}(Proposition 3.1 \cite{LM}) Suppose that a subsemigroup $S$ of $\ZZ^d\times\ZZ_{\ge 0}$ satisfies  (\ref{Cone2}) and (\ref{Cone3}). Fix $\epsilon>0$. Then there is an integer $p_0= p_0(\epsilon)$
such that if $p\ge p_0$, then the limit
$$
\lim_{k\rightarrow \infty} \frac{\#(k*S_p)}{k^dp^d}\ge {\rm vol}(\Delta(S))-\epsilon
$$
exists.
\end{Theorem}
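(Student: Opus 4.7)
The plan is to deduce this as an immediate specialization of Theorem \ref{ConeTheorem4}, in exactly the way that Theorem \ref{ConeTheorem1} was obtained from Theorem \ref{ConeTheorem3}. The hypotheses (\ref{Cone2}) and (\ref{Cone3}) collapse all of the auxiliary invariants appearing in Theorem \ref{ConeTheorem4} to their simplest values, so once we verify those collapses, the inequality and the existence of the limit simply read off.

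First I would note that (\ref{Cone2}) guarantees that $S$ is strongly nonnegative: $S$ is contained in the semigroup $B$ spanned by vectors of the form $(v_i,1)$, whose cone meets $\partial M(B) = \RR^d \times \{0\}$ only at the origin, and any subsemigroup of a strongly nonnegative semigroup is strongly nonnegative. Hence Theorem \ref{ConeTheorem4} is applicable to $S$. Next, following the proof of Theorem \ref{ConeTheorem1}, condition (\ref{Cone3}) ($G(S)=\ZZ^{d+1}$) forces $L(S)=\RR^{d+1}$, so $M(S) = \RR^d \times \RR_{\ge 0}$ and $\partial M(S) = \RR^d \times \{0\}$. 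This immediately yields $q(S)=d$, $m(S)=1$, and $\mathrm{ind}(S) = [\partial M(S)_{\ZZ} : G(S) \cap \partial M(S)_{\ZZ}] = 1$, and the intrinsic volume ${\rm vol}_{q(S)}(\Delta(S))$ coincides with the ordinary Euclidean volume ${\rm vol}(\Delta(S))$ on $\RR^d \times \{1\}$.

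With these identifications in place, the statement of Theorem \ref{ConeTheorem4} specializes, for $p \ge p_0(\epsilon)$, to
$$
\lim_{k \to \infty} \frac{\#(k*S_p)}{k^d p^d} \ge {\rm vol}(\Delta(S)) - \epsilon,
$$
with existence of the limit part of the conclusion of Theorem \ref{ConeTheorem4}. This is precisely the claim, so the proof is complete.

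There is essentially no obstacle here: all of the genuine work, namely showing that $n * S_{pm(S)}$ has the expected asymptotic count and that enlarging $p$ pushes the relevant volume invariant arbitrarily close to ${\rm vol}_{q(S)}(\Delta(S))$, has already been carried out in Theorem \ref{ConeTheorem4}. The only thing to be careful about is matching notation, in particular confirming that the integral volume ${\rm vol}_{q(S)}$ on $\partial M(S)$ reduces to ordinary Lebesgue volume on $\RR^d$ under the present hypotheses; this is immediate since $\partial M(S) = \RR^d \times \{0\}$ carries its standard integral structure coming from $\ZZ^d \times \{0\}$.
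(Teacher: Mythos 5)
Your proof is correct and matches the paper's intent exactly: the paper gives no explicit proof, prefacing Theorem \ref{ConeTheorem2} only with ``We obtain the following result,'' meaning it is viewed as the immediate specialization of Theorem \ref{ConeTheorem4} under (\ref{Cone2}) and (\ref{Cone3}), in direct parallel to how Theorem \ref{ConeTheorem1} was deduced from Theorem \ref{ConeTheorem3}. You have filled in precisely the identifications ($m(S)=1$, $q(S)=d$, $\mathrm{ind}(S)=1$, integral volume = Euclidean volume) that the paper leaves implicit.
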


\section{Asymptotic theorems on lengths}\label{SecAsyRing}

\begin{Definition} A graded family of ideals $\{I_i\}$ in a ring $R$ is a family of ideals indexed by the natural numbers such that $I_0=R$ and $I_iI_j\subset I_{i+j}$
for all $i,j$.  If $R$ is a local ring and $I_i$ is $m_R$-primary for $i>0$, then we will say that $\{I_i\}$ is a graded family of $m_R$-primary ideals.
\end{Definition}

The following theorem is proven with the further assumptions that $R$ is equicharacteristic with perfect residue field in \cite{C1}.

\begin{Theorem}\label{Theorem1} Suppose that $R$ is an analytically irreducible local domain of dimension $d$ and  $\{I_i\}$ is a graded family of  $m_R$-primary ideals in $R$. 
Then
$$
\lim_{i\rightarrow\infty}\frac{\ell_R(R/I_i)}{i^d}
$$
exists.
\end{Theorem}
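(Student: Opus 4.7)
The plan is to express $\ell_R(R/I_n)$ as a difference of cardinalities of slices of strongly nonnegative semigroups in $\ZZ^{d+1}$, then apply Theorem \ref{ConeTheorem3}. First, since $m_R$-adic completion preserves lengths of artinian quotients and $\hat R$ is again an analytically irreducible local domain of dimension $d$, I may replace $R$ by $\hat R$ and assume $R$ is complete. Next, following the method sketched in the introduction, I choose a valuation $\nu$ of $Q(R)$ whose valuation ring $V_\nu$ dominates $R$, such that $\nu$ has rational rank $d$ (so $\Gamma_\nu\cong \ZZ^d$ sits as an ordered subgroup of $\RR$) and $[V_\nu/m_\nu:k]<\infty$; for example, one can take $\nu$ to be the composition of a rank one valuation centered at a suitable nonsingular closed point on a birational modification of $\mathrm{Spec}(R)$. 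Because $R$ is analytically irreducible, Izumi's theorem in the form due to Rees guarantees that the filtration $\{R\cap K_\lambda\}$ (where $K_\lambda=\{f\in V_\nu:\nu(f)\geq \lambda\}$) is linearly equivalent to the $m_R$-adic filtration, and combining this with $m_R^c\subseteq I_1$ furnishes a constant $\beta>0$ for which $R\cap K_{\beta n}\subseteq I_n$ for all $n\geq 0$.

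Using $\nu$, I implement the bookkeeping from the introduction: for each $1\leq t\leq [V_\nu/m_\nu:k]$, the sets
$$
S(R)^{(t)}=\bigl\{(\overline n,i)\in\ZZ^{d+1}:\dim_k R\cap K_{\overline n}/R\cap K_{\overline n}^+\geq t\bigr\}
$$
and the analogously defined $S(I)^{(t)}$ (with $R$ replaced by $I_i$) are subsemigroups of $\ZZ^{d+1}$; the bound $R\cap K_{\beta i}\subseteq I_i$ together with $\nu\geq 0$ on $R$ confines all occupied slices to the strongly nonnegative cone $\{0\leq \overline n\leq \beta i\}$, and the rational rank of $\nu$ forces $q(S)=d$. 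The graded family structure produces enough slice points, for instance $(0,i)\in S(R)^{(1)}$ for all $i$ and $(\nu(I_i),i)\in S(I)^{(1)}$ for all $i$, to conclude $m(S)=1$ in each case. The short exact sequence $0\to I_n/(R\cap K_{\beta n})\to R/(R\cap K_{\beta n})\to R/I_n\to 0$ together with the summation formula displayed at the end of the introduction then yields the identity
$$
\ell_R(R/I_n)=\sum_{t=1}^{[V_\nu/m_\nu:k]}\bigl(\#S(R)^{(t)}_n-\#S(I)^{(t)}_n\bigr).
$$
Applying Theorem \ref{ConeTheorem3} to each of the $2[V_\nu/m_\nu:k]$ semigroups gives the existence of every limit $\lim_{n\to\infty}\#S(\cdot)^{(t)}_n/n^d$, and summing with signs produces $\lim_{n\to\infty}\ell_R(R/I_n)/n^d$.

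The principal technical obstacle, which distinguishes this argument from Theorem 5.8 of \cite{C1}, is handling residue field extensions $[V_\nu/m_\nu:k]>1$ directly rather than by base change, since base change is unavailable in the inseparable situation that the paper must treat. This forces the introduction of the $t$-layered semigroups $S^{(t)}$ in place of a single semigroup, and one must carefully verify that each layer is a subsemigroup and satisfies the hypotheses of Theorem \ref{ConeTheorem3}. Once this layering is in place, the combinatorial input from Kaveh--Khovanskii is essentially the same as in the case $[V_\nu/m_\nu:k]=1$ of \cite{C1}.
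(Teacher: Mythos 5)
Your route is the paper's own: complete $R$, construct a monomial valuation at a regular closed point of a birational model so that $[V_\nu/m_\nu:k]<\infty$, use the Rees/Izumi-type estimate (the paper's (\ref{eqred50}), i.e.\ Lemma 4.3 of \cite{C1}, which is where analytic irreducibility enters) to get $K_{\beta n}\cap R\subset I_n$, layer by multiplicity $t$, and apply Kaveh--Khovanskii. However, two steps do not work as written. First, your semigroups $S(R)^{(t)}$ and $S(I)^{(t)}$ omit the truncation $n_1+\cdots+n_d\le \beta i$ that the paper builds into the definitions of $\hat\Gamma^{(t)}$ and $\Gamma^{(t)}$. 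With your definition every slice is infinite: the set of values attained on $R$, or on the ideal $I_i$, is unbounded and has nothing to do with the cutoff $\beta i$, so the assertion that $R\cap K_{\beta i}\subseteq I_i$ ``confines all occupied slices to $0\le \overline n\le \beta i$'' is false, and your displayed identity for $\ell_R(R/I_n)$ is a difference of infinite quantities. The repair is exactly the paper's: put the bound $n_1+\cdots+n_d\le\beta i$ into the definition of each layer (this is compatible with the semigroup law, and the normalization $\lambda_j\ge 1$ makes ``value $<\beta i$'' imply ``exponent sum $<\beta i$''), after which the identity (\ref{eq12}) counts only values below $\beta i$; this truncation is also what yields the boundedness condition (\ref{Cone2}).

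Second, invoking Theorem \ref{ConeTheorem3} layer by layer only gives convergence of $\#S^{(t)}_{m(S^{(t)})k}/k^{q(S^{(t)})}$. To divide by $n^d$, let $n$ run over all integers, and sum with signs, you must know $m(S^{(t)})=1$ and $q(S^{(t)})=d$ for every nontrivial layer of both families. The evidence you give --- $(0,i)\in S(R)^{(1)}$, $(\nu(I_i),i)\in S(I)^{(1)}$, and ``rational rank $d$ forces $q(S)=d$'' --- concerns only the layer $t=1$ and is not a proof for $t\ge 2$: a higher layer could a priori be supported on a proper sublattice or generate a lower-dimensional cone, in which case its counts normalized by $n^d$ oscillate and the signed sum need not converge. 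The missing ingredient is the paper's Lemma \ref{Lemmared3} (with Lemma \ref{Lemmared1}): take an element $h\in I_r$ whose value lies in the $t$-th layer, write the regular parameters as $y_j=f_j/g_j$ with $f_j,g_j\in R$, enlarge $c$ so that $hf_j,hg_j$ stay under the cutoff, and translate to show the differences $(e_j,0)$ and $(\phi(u),1)$ lie in $G(\Gamma^{(t)})$, giving $G(\Gamma^{(t)})=\ZZ^{d+1}$; then Theorem \ref{ConeTheorem1} applies with $m=1$, $q=d$ and index $1$ for each nontrivial layer, while trivial layers contribute nothing. With these two repairs your outline coincides with the paper's proof.
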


\begin{Corollary}\label{Cor33} Suppose that $R$ is an analytically irreducible local domain of dimension $d>0$ and  $\{I_i\}$ is a graded family of   ideals in $R$ such that there exists a positive number $c$ such that $m_R^c\subset I_1$.
Then
$$
\lim_{i\rightarrow\infty}\frac{\ell_R(R/I_i)}{i^d}
$$
exists.
\end{Corollary}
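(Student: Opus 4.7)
The plan is to reduce the corollary immediately to Theorem \ref{Theorem1} by verifying that the family $\{I_i\}$ already satisfies the hypotheses of that theorem, i.e.\ that each $I_i$ with $i>0$ is $m_R$-primary.

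First I would observe that since $\{I_i\}$ is a graded family of ideals, the defining condition $I_iI_j\subset I_{i+j}$ implies by induction that $I_1^i\subset I_i$ for every $i\geq 1$. Combining this with the hypothesis $m_R^c\subset I_1$ yields
$$
m_R^{ci}\subset I_1^i\subset I_i
$$
for every $i\geq 1$. Because $R$ is a local domain of positive dimension, $m_R^{ci}$ is a nonzero $m_R$-primary ideal, and so $I_i$ is $m_R$-primary as well; in particular $\ell_R(R/I_i)<\infty$.

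Thus $\{I_i\}$ is a graded family of $m_R$-primary ideals in the sense of the definition given just above Theorem \ref{Theorem1}. Since $R$ is analytically irreducible of dimension $d$, applying Theorem \ref{Theorem1} directly gives the existence of
$$
\lim_{i\rightarrow\infty}\frac{\ell_R(R/I_i)}{i^d}.
$$
There is no genuine obstacle here: the corollary is purely a packaging of Theorem \ref{Theorem1}, the only content being the elementary observation that containing a power of $m_R$ in $I_1$ forces the same for every $I_i$ via the graded family property.
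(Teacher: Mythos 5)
There is a genuine, if small, gap in your reduction: the step ``$m_R^{ci}\subset I_i$, hence $I_i$ is $m_R$-primary'' is not valid in general. The paper's definition of a graded family only requires $I_0=R$ and $I_iI_j\subset I_{i+j}$; it does not force $I_i$ to be a proper ideal for $i>0$. If some $I_{n_0}=R$ with $n_0>0$, then $I_{n_0}$ contains $m_R^{cn_0}$ but is certainly not a primary ideal, so $\{I_i\}$ is not a graded family of $m_R$-primary ideals and Theorem \ref{Theorem1} does not apply to it as stated. (When $I_i$ is proper your argument is fine: $m_R^{ci}\subset I_i\subset m_R$ gives $\sqrt{I_i}=m_R$, hence $I_i$ is $m_R$-primary.) This degenerate case is exactly the extra content of the corollary, and it is also where the hypothesis $d>0$ --- which Theorem \ref{Theorem1} does not carry --- gets used; its presence in the statement is a signal that a pure repackaging of the theorem cannot be the whole proof.

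The missing case is handled as in the paper: either $I_n$ is $m_R$-primary for all $n>0$, in which case your argument applies verbatim, or there exists $n_0>1$ with $I_{n_0}=R$. In the latter case, writing $n=qn_0+r$ with $0\le r<n_0$ and using $I_{n_0}^qI_r\subset I_n$ together with $m_R^{cr}\subset I_1^r\subset I_r$, one gets $m_R^{cn_0}\subset m_R^{cr}\subset I_n$ for all $n\ge n_0$, so $\ell_R(R/I_n)\le \ell_R(R/m_R^{cn_0})$ is bounded; since $d>0$, the limit exists and equals $0$. (For $d=0$ this conclusion would fail in general, since a bounded sequence of lengths need not converge, as the construction with the function $\tau(n)$ in the proof of Theorem \ref{Theorem4} shows.) With this case added, your proof matches the paper's.
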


\begin{proof}
The assumption $m_R^c\subset I_1$ implies that either $I_n$ is $m_R$-primary for all positive $n$, or there exists
$n_0>1$ such that $I_{n_0}=R$. In the first case, the Corollary follows from Theorem \ref{Theorem1}. In the second case, $m_R^{cn_0}\subset I_n$ for all $n\ge n_0$,  so $\ell_R(R/I_i)$ is actually bounded.
\end{proof}

We now give the proof of Theorem \ref{Theorem1}.

Since $I_1$ is $m_R$-primary, there exists   $c\in \ZZ_+$ such that 
\begin{equation}\label{eq8}
m_R^c\subset I_1.
\end{equation}

 Let $\hat R$ be the $m_R$-adic completion of $R$. Since the $I_n$ are $m_R$-primary, we have that 
$R/I_n\cong \hat R/I_n\hat R$ and $\ell_R(R/I_n)=\ell_{\hat R}(\hat R/I_n\hat R)$ for all $n$.   We may thus assume that $R$ is an excellent domain.
Let $\pi:X\rightarrow \mbox{spec}(R)$ be the normalization of the blow up of $m_R$. $X$ is of finite type over $R$ since $R$ is excellent.
Since $\pi^{-1}(m_R)$ has codimension 1 in $X$ and $X$ is normal, there exists a closed point $x\in X$ such that the local ring $\mathcal O_{X,x}$ is a regular local ring. Let $S$ be this local ring. $S$ is a regular local ring which  is essentially of finite type  and birational over $R$ ($R$ and $S$ have the same  quotient field).

Let $y_1,\ldots,y_d$ be a regular system of parameters in $S$. Let $\lambda_1,\ldots,\lambda_d$ be rationally independent real numbers, such that 
\begin{equation}\label{eq9}
\lambda_i\ge 1\mbox{ for all $i$}.
\end{equation}
 We define a valuation $\nu$ on
$Q(R)$ which dominates $S$ by prescribing 
$$
\nu(y_1^{a_1}\cdots y_d^{a_d})=a_1\lambda_1+\cdots+a_d\lambda_d
$$
for $a_1,\ldots,a_d\in \ZZ_+$, and $\nu(\gamma)=0$  if $\gamma\in S$ has nonzero residue in $S/m_S$.

Let $C$ be a coefficient set of $S$. Since $S$ is a regular local ring, for $r\in \ZZ_+$ and $f\in S$, there is a unique expression 
\begin{equation}\label{eqred11}
f=\sum s_{i_1,\ldots,i_d}y_1^{i_1}\cdots y_d^{i_d}+g_r
\end{equation}
with $g_r\in m_S^r$, $s_{i_1,\ldots,i_d}\in S$ and $i_1+\cdots+i_d<r$ for all $i_1,\ldots,i_d$ appearing in the sum. Take $r$ so large that 
$r> i_1\lambda_1+\cdots+i_d\lambda_d$ for some term with $s_{i_1,\ldots,i_d}\ne 0$. Then define
\begin{equation}\label{eq61}
\nu(f)=\min\{i_1\lambda_1+\cdots+i_d\lambda_d\mid s_{i_1,\ldots,i_d}\ne 0\}.
\end{equation}
This definition is well defined, and we calculate that
$\nu(f+g)\ge \min\{\nu(f),\nu(g)\}$ and $\nu(fg)=\nu(f)+\nu(g)$ (by the uniqueness of the expansion (\ref{eqred11})) for all $0\ne f,g\in S$. Thus $\nu$ is a valuation.
 Let $V_{\nu}$ be the valuation ring of $\nu$ (in $Q(R)$). The value group $\Gamma_{\nu}$ of $V_{\nu}$ is the (nondiscrete) ordered subgroup 
$\ZZ\lambda_1+\cdots+\ZZ\lambda_d$ of $\RR$. Since there is unique monomial giving the minimum in (\ref{eq61}), we have that the residue field of $V_{\nu}$ is
$S/m_S$.

Let $k=R/m_R$ and $k'=S/m_S=V_{\nu}/m_{\nu}$. Since $S$ is essentially of finite type over $R$, we have that $[k':k]<\infty$.

For $\lambda\in \RR$, define  ideals $K_{\lambda}$ and $K_{\lambda}^+$ in $V_{\nu}$ by
$$
K_{\lambda}=\{f\in Q(R)\mid \nu(f)\ge\lambda\}
$$
and
$$
K_{\lambda}^+=\{f\in Q(R)\mid \nu(f)>\lambda\}.
$$

We follow the usual convention that $\nu(0)=\infty$ is larger than any element of $\RR$. By Lemma 4.3 \cite{C1}, we have the following formula. The assumption that $R$ is analytically irreducible is necessary for the validity of the formula.

\begin{equation}\label{eqred50}
\mbox{There exists $\alpha\in \ZZ_+$ such that $K_{\alpha n}\cap R\subset m_R^n$ for all $n\in \NN$.}
\end{equation}

 Suppose that $I\subset R$ is an ideal and $\lambda\in \Gamma_{\nu}$ is nonnegative. Then we have inclusions of $k$-vector spaces
 $$
 I\cap K_{\lambda}/I\cap K_{\lambda}^+\subset K_{\lambda}/K_{\lambda}^+.
 $$
 Since $K_{\lambda}/K_{\lambda}^+$ is isomorphic to $k'$, we conclude that 
\begin{equation}\label{red1}
\dim_k I\cap K_{\lambda}/I\cap K_{\lambda}^+\le [k':k].
\end{equation}

 Let $\beta=\alpha c\in \ZZ_+$, where $c$ is the constant of (\ref{eq8}), and $\alpha$ is the constant of (\ref{eqred50}), so that for all $i\in \ZZ_+$,
\begin{equation}\label{eq13}
K_{\beta i}\cap R= K_{\alpha c i}\cap R\subset m_R^{ic}\subset I_i.
\end{equation}

For $t\ge  1$, define 
$$
\Gamma^{(t)}=\left\{
\begin{array}{l}
(n_1,\ldots,n_d,i)\in \NN^{d+1}\mid \dim_k I_i\cap K_{n_1\lambda_1+\cdots+n_d\lambda_d}/I_i\cap K_{n_1\lambda_1+\cdots+n_d\lambda_d}^+\ge t\\
\mbox{ and }n_1+\cdots+n_d\le \beta i
\end{array}
\right\},
$$
and
$$
\hat{\Gamma}^{(t)}=
\left\{
\begin{array}{l}(n_1,\ldots,n_d,i)\in \NN^{d+1}\mid \dim_k R\cap K_{n_1\lambda_1+\cdots+n_d\lambda_d}/R\cap K_{n_1\lambda_1+\cdots+n_d\lambda_d}^+\ge t\\
\mbox{ and }n_1+\cdots+n_d\le \beta i
\end{array}
\right\}.
$$

Let $\lambda=n_1\lambda_1+\cdots+n_d\lambda_d$ be such that $n_1+\cdots+n_d\le\beta i$. Then
\begin{equation}\label{eqred40}
\dim_kK_{\lambda}\cap I_i/K_{\lambda}^+\cap I_i=\#\{t|(n_1,\ldots,n_d,i)\in \Gamma_i^{(t)}\}.
\end{equation}

\begin{Lemma}\label{Lemmared1} Suppose that $t\ge 1$, $0\ne f\in I_i$, $0\ne g\in I_j$ and
$$
\dim_kI_i\cap K_{\nu(f)}/I_i\cap K_{\nu(f)}^+\ge t.
$$
Then
\begin{equation}\label{eqred10}
\dim_k I_{i+j}\cap K_{\nu(fg)}/I_{i+j}\cap K_{\nu(fg)}^+\ge t.
\end{equation}
In particular, when nonempty,
   $\Gamma^{(t)}$ and $\hat{\Gamma}^{(t)}$ are subsemigroups of the  semigroup $\ZZ^{d+1}$.
\end{Lemma}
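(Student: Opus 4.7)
The plan is as follows. The key observation is that multiplication by the given element $g \in I_j$ induces a natural $k$-linear map
$$
\phi_g \colon I_i \cap K_{\nu(f)}/I_i \cap K_{\nu(f)}^+ \longrightarrow I_{i+j} \cap K_{\nu(fg)}/I_{i+j} \cap K_{\nu(fg)}^+,
$$
and I would prove the first assertion of the lemma by showing that $\phi_g$ is injective.

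To verify that $\phi_g$ is well defined, I would note that for any $h \in I_i \cap K_{\nu(f)}$ we have $hg \in I_iI_j \subset I_{i+j}$ by the graded-family hypothesis, and $\nu(hg) = \nu(h)+\nu(g) \ge \nu(f)+\nu(g) = \nu(fg)$, so $hg \in I_{i+j} \cap K_{\nu(fg)}$. Moreover, if $h \in K_{\nu(f)}^+$ then $\nu(hg) > \nu(fg)$, so $hg \in K_{\nu(fg)}^+$. The map is $R$-linear, and since $\nu$ dominates $R$ (via its domination of $S$), we have $\nu(m_R) > 0$, which forces $m_R \cdot K_\lambda \subset K_\lambda^+$ for every $\lambda$; hence both source and target are genuine $k$-vector spaces and $\phi_g$ is $k$-linear. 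Injectivity is equally immediate from the additivity of the valuation: if $hg \in I_{i+j} \cap K_{\nu(fg)}^+$, then $\nu(h) > \nu(f)$, so $h \in I_i \cap K_{\nu(f)}^+$ and its class in the source is already zero. Since $\phi_g$ is a $k$-linear injection, $\dim_k$ of the target is at least $\dim_k$ of the source, yielding (\ref{eqred10}).

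For the semigroup statement, suppose $(n_1,\ldots,n_d,i),(m_1,\ldots,m_d,j) \in \Gamma^{(t)}$. Since $t \ge 1$, the relevant quotients are nonzero, so I can pick $f \in I_i$ and $g \in I_j$ realizing $\nu(f) = \sum n_l \lambda_l$ and $\nu(g) = \sum m_l \lambda_l$ respectively. Applying the first part then gives the required dimension lower bound at $\nu(fg) = \sum (n_l+m_l)\lambda_l$, and the constraint $\sum (n_l+m_l) \le \beta(i+j)$ follows by adding the two given constraints. The same verbatim argument applies to $\hat\Gamma^{(t)}$ by taking $I_i = I_j = R$. I do not anticipate any genuine obstacle here: the entire content of the lemma is the multiplicativity $\nu(fg)=\nu(f)+\nu(g)$ of the valuation, repackaged as a statement about the map induced by multiplication by $g$ between graded pieces of the $K_\bullet$ filtration.
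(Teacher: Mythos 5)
Your proposal is correct and is essentially the paper's own argument: the paper picks $f_1,\ldots,f_t$ with $k$-linearly independent classes and checks that the classes of $gf_1,\ldots,gf_t$ remain independent, which is exactly your statement that multiplication by $g$ induces an injective $k$-linear map between the quotients, both resting on $\nu(fg)=\nu(f)+\nu(g)$. The semigroup deduction (choosing representatives of value $\sum n_l\lambda_l$ and adding the constraints $n_1+\cdots+n_d\le\beta i$) likewise matches the paper, which applies the same reasoning verbatim to $\hat\Gamma^{(t)}$.
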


\begin{proof} 
 There exist $f_1,\ldots,f_t\in I_i\cap K_{\nu(f)}$ such that their classes  are linearly independent over $k$ in $I_i\cap K_{\nu(f)}/ I_i\cap K_{\nu(f)}^+$.
 We will show that the classes of $gf_1,\ldots, gf_t$ in 
$$
I_{i+j}\cap K_{\nu(fg)}/I_{i+j}\cap K_{\nu(fg)}^+
$$
 are linearly independent over $k$.

Suppose that 
$a_1,\ldots,a_t \in k$  are such that the class of $a_1gf_1+\cdots +a_tgf_t$ in $I_{i+j}\cap K_{\nu(fg)}/I_{i+j}\cap K_{\nu(fg)}^+$ is zero. Then
$\nu(a_1gf_1+\cdots +a_tgf_t)>\nu(fg)$, whence $\nu(a_1f_1+\cdots+a_tf_t)>\nu(f)$,
so $a_1f_1+\cdots +a_tf_t\in I_i\cap K_{\nu(f)}^+$. Thus $a_1=\cdots=a_t=0$, since the classes of $f_1,\ldots,f_t$ are linearly independent over $k$ in $I_i\cap K_{\nu(f)}/ I_i\cap K_{\nu(f)}^+$. 
\end{proof}

From (\ref{eq13}), and since $n_1\lambda_1+\cdots+n_d\lambda_d<\beta i$
implies $n_1+\cdots+n_d< \beta i$   by (\ref{eq9}),
we have that 

\begin{equation}\label{eq12}
\begin{array}{lll}
\ell_R(R/I_i)&=&\ell_R(R/K_{\beta i}\cap R)-\ell_R(I_i/K_{\beta i}\cap I_i)\\
&=& \dim_k\left(\bigoplus_{\lambda<\beta i}K_{\lambda}\cap R/K_{\lambda}^+\cap R\right)
-\dim_k\left(\bigoplus_{\lambda<\beta i}K_{\lambda}\cap I_i/K_{\lambda}^+\cap I_i\right)\\
&=& \left(\sum_{t=1}^{[k':k]}\# \hat{\Gamma}_i^{(t)}\right)-\left(\sum_{t=1}^{[k':k]}\# \Gamma_i^{(t)}\right),
\end{array}
\end{equation}
where $\Gamma_i^{(t)}=\Gamma^{(t)}\cap (\NN^d\times\{i\})$ and $\hat{\Gamma}_i^{(t)}=\hat{\Gamma}^{(t)}\cap (\NN^d\times\{i\})$.

For $0\ne f\in R$, define
$$
\phi(f)=(n_1,\ldots,n_d)\in \NN^d
$$
if $\nu(f)=n_1\lambda_1+\cdots+n_d\lambda_d$. We have that $\phi(fg)=\phi(f)+\phi(g)$.

\begin{Lemma}\label{Lemmared3} Suppose that $t\ge 1$ and $\Gamma^{(t)}\not\subset (0)$. Then  $\Gamma^{(t)}$ satisfies equations (\ref{Cone2}) and (\ref{Cone3}).
\end{Lemma}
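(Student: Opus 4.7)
The verification of (\ref{Cone2}) and (\ref{Cone3}) proceed separately. For (\ref{Cone2}), the defining inequality $n_1+\cdots+n_d \leq \beta i$ in $\Gamma^{(t)}$ places every element inside the subsemigroup $B \subset \NN^{d+1}$ generated by the finite set of vectors $(v_1,\ldots,v_d,1)$ with $v_i \in \NN$ and $v_1+\cdots+v_d \leq \beta$. Given $(n_1,\ldots,n_d,i) \in \Gamma^{(t)}$, distribute the $n_j$'s into $i$ bins each of total size at most $\beta$, producing $i$ vectors $v^{(1)},\ldots,v^{(i)}$ with $(n,i) = \sum_{\ell=1}^i (v^{(\ell)},1) \in B$.

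For (\ref{Cone3}) I need to show $G(\Gamma^{(t)}) = \ZZ^{d+1}$. Fix a witness $(m,i_0) \in \Gamma^{(t)}$ realized by $f_1,\ldots,f_t \in I_{i_0}$ of common valuation $m_1\lambda_1+\cdots+m_d\lambda_d$ whose classes are linearly independent modulo $K^+_{m_1\lambda_1+\cdots+m_d\lambda_d}$. Two algebraic facts drive the construction: since $R \subset S$ share the quotient field $Q(R)$, for each $j=1,\ldots,d$ I may write $y_j = a_j/b_j$ with $a_j,b_j \in R$, giving $\phi(a_j)-\phi(b_j) = e_j$ in $\ZZ^d$; and I select an auxiliary $g \in I_1$ whose $\phi$-length $\phi(g)_1+\cdots+\phi(g)_d$ is strictly less than $\beta$. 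Iterating Lemma \ref{Lemmared1} with multipliers $g^L,\,a_jg^L,\,b_jg^L \in I_L$, the linearly-growing slack $L(\beta-(\phi(g)_1+\cdots+\phi(g)_d))$ eventually absorbs the fixed contributions from $\phi(a_j)$ and $\phi(b_j)$ in the bound condition, so for all $L$ sufficiently large the three points
\begin{equation*}
(m+L\phi(g),\,i_0+L),\quad (m+\phi(a_j)+L\phi(g),\,i_0+L),\quad (m+\phi(b_j)+L\phi(g),\,i_0+L)
\end{equation*}
all lie in $\Gamma^{(t)}$. Differencing the last two places $(e_j,0) \in G(\Gamma^{(t)})$ for each $j$; differencing the first at consecutive $L$ places $(\phi(g),1) \in G(\Gamma^{(t)})$; combining yields $(0,1)$. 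Hence $\{(e_1,0),\ldots,(e_d,0),(0,1)\}$ is a $\ZZ$-basis of $\ZZ^{d+1}$ inside $G(\Gamma^{(t)})$.

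The principal technical hurdle is producing the auxiliary $g \in I_1$ with $\phi(g)_1+\cdots+\phi(g)_d < \beta$, since otherwise the slack in the bound condition cannot absorb extra fixed contributions and no shifts $(e_j,0)$ can be generated. This strict inequality is arranged by enlarging $\alpha$ in (\ref{eqred50}) --- any integer above the minimal threshold still satisfies (\ref{eqred50}) and keeps (\ref{eq13}) valid --- so that $\beta = \alpha c$ exceeds $c\cdot\max_j(\phi(x_j)_1+\cdots+\phi(x_j)_d)$, where $x_1,\ldots,x_r$ generate $m_R$. Then any monomial $g = x_{j_1}\cdots x_{j_c} \in m_R^c \subset I_1$ achieves the strict inequality.
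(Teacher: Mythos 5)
Your proposal is correct and follows essentially the same route as the paper: for (\ref{Cone2}) the containment of $\Gamma^{(t)}$ in the semigroup generated by the vectors $(v,1)$ with $v\in\NN^d$, $v_1+\cdots+v_d\le\beta$, and for (\ref{Cone3}) the production of $(e_j,0)$ by writing $y_j=a_j/b_j$ with $a_j,b_j\in R$ and differencing translates of a witness via Lemma \ref{Lemmared1}, plus $(0,1)$ from an element of $I_1$, after enlarging the constant $\beta=\alpha c$. The only (harmless) difference is the device used to keep the new points within the bound $n_1+\cdots+n_d\le\beta i$: the paper enlarges $c$ so that the finitely many elements $u,hf_i,hg_i$ lie outside $m_R^{c'}$ and hence have value less than $\beta$ by (\ref{eqred50}), whereas you enlarge $\alpha$ to obtain an auxiliary $g\in I_1$ with value-sum strictly below $\beta$ and let the powers $g^L$ supply linearly growing slack; both adjustments preserve (\ref{eqred50}) and (\ref{eq13}), so the proof of Theorem \ref{Theorem1} is unaffected.
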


\begin{proof} Let $\{e_i\}$ be the standard basis of $\ZZ^{d+1}$.
The semigroup
$$
B=\{(n_1,\ldots,n_d,i)\mid (n_1,\ldots,n_d)\in \NN^{d}\mbox{ and }n_1+\cdots+n_d\le\beta i\}
$$
is generated by $B\cap (\NN^d\times\{1\})$ and contains $\Gamma^{(t)}$, so (\ref{Cone2}) holds.

By assumption, there exists $r\ge 1$ and $0\ne h\in I_r$ such that $(\phi(h),r)\in \Gamma^{(t)}$.

There exists $0\ne u\in I_1$. 
Write $y_i=\frac{f_i}{g_i}$ with $f_i,g_i\in R$ for $1\le i\le d$.  Then $hf_i, hg_i\in I_{r}$. There exists $c'\in \ZZ_+$ such that $c'\ge c$ and $u, hf_i, hg_i\not\in m_R^{c'}$ for $1\le i\le d$. We may replace $c$ with $c'$ in (\ref{eq8}). Then
$(\phi(hf_i),r), (\phi(hg_i),r)\in \Gamma_r^{(t)}=\Gamma^{(t)}\cap(\NN^d\times\{r\})$ for $1\le i\le d$, by (\ref{eqred10}) (with $f_i,g_i\in I_0=R$)  since $hf_i$ and $hg_i$ all have values $n_1\lambda_1+\cdots+n_d\lambda_d<\beta r$, so that $n_1+\ldots+n_d<\beta r$. We have that  $\phi(y_i)=\phi(hf_i)-\phi(hg_i)=\phi(y_i)=e_i$ for $1\le i\le d$. Thus
$$
(e_i,0)=(\phi(hf_i),r)-(\phi(hg_i),r)\in G(\Gamma^{(t)})
$$
for $1\le i\le d$. 
 $(\phi(uh),r+1)\in \Gamma^{(t)}$ by (\ref{eqred10}) and since $\nu(u)\le \beta$,
so that $(\phi(u),1)\in G(\Gamma^{(t)})$, so $e_{d+1} \in G(\Gamma^{(t)})$. Thus $G(\Gamma^{(t)})=\ZZ^{d+1}$ and (\ref{Cone3}) holds.
\end{proof}

The  same argument proves the following lemma.
\begin{Lemma}\label{Lemmared4} Suppose that $t\ge 1$ and $\hat\Gamma^{(t)}\not\subset (0)$.  Then  $\hat{\Gamma}^{(t)}$ satisfies equations (\ref{Cone2}) and(\ref{Cone3}).
\end{Lemma}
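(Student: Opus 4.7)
The plan is to mirror the proof of Lemma \ref{Lemmared3}, with $R$ playing the role of the ideals $I_i$ throughout. The argument of Lemma \ref{Lemmared3} invokes the graded-family structure only through Lemma \ref{Lemmared1}, and that lemma remains available here: we simply apply it to the constant (and trivially graded) family in which every $I_n=R$, so that the hypotheses ``$0\ne f\in I_i$, $0\ne g\in I_j$'' become ``$f,g\in R$''. For (\ref{Cone2}) there is nothing new to do: the same cone semigroup $B=\{(n_1,\dots,n_d,i)\in\NN^{d+1}\mid n_1+\cdots+n_d\le\beta i\}$ is generated by $B\cap(\NN^d\times\{1\})$ and contains $\hat{\Gamma}^{(t)}$.

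For (\ref{Cone3}), use the assumption $\hat{\Gamma}^{(t)}\not\subset(0)$ to pick $r\ge 1$ and $0\ne h\in R$ with $(\phi(h),r)\in\hat{\Gamma}^{(t)}$, and write $y_i=f_i/g_i$ with $f_i,g_i\in R$ for $1\le i\le d$. Enlarge $c$ to some $c'\ge c$ so that $hf_i,hg_i\notin m_R^{c'}$ for all $i$, replace $c$ by $c'$ in (\ref{eq8}), and set the new $\beta:=\alpha c'$. Then (\ref{eqred50}), together with $\lambda_j\ge 1$, forces $\nu(hf_i),\nu(hg_i)<\beta r$ and consequently $n_1+\cdots+n_d<\beta r$ for the corresponding value vectors, so Lemma \ref{Lemmared1} (applied to the constant family $\{R\}$) puts $(\phi(hf_i),r)$ and $(\phi(hg_i),r)$ inside $\hat{\Gamma}^{(t)}$. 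Since $\phi(hf_i)-\phi(hg_i)=\phi(y_i)=e_i$, this gives $e_i\in G(\hat{\Gamma}^{(t)})$ for $1\le i\le d$.

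One small simplification is available relative to Lemma \ref{Lemmared3}: since the only $i$-dependence in the defining condition of $\hat{\Gamma}^{(t)}$ is the one-sided inequality $n_1+\cdots+n_d\le\beta i$, the tuple $(\phi(h),r+1)$ automatically lies in $\hat{\Gamma}^{(t)}$, and subtracting $(\phi(h),r)$ produces $e_{d+1}\in G(\hat{\Gamma}^{(t)})$ without needing any auxiliary element $u\in I_1$. Together with the previous step, $G(\hat{\Gamma}^{(t)})=\ZZ^{d+1}$, which is (\ref{Cone3}). I anticipate no real obstacle; the only point requiring care is the replacement of $c$ by $c'$ and the accompanying rescaling of $\beta$, performed exactly as in Lemma \ref{Lemmared3}.
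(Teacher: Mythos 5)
Your proposal is correct and is essentially the paper's proof: the paper disposes of Lemma \ref{Lemmared4} with the single remark that ``the same argument proves the following lemma,'' i.e.\ one reruns the proof of Lemma \ref{Lemmared3} with $R$ in place of the $I_i$, exactly as you do, including the enlargement of $c$ (hence of $\beta$) via (\ref{eq8}) and (\ref{eqred50}). Your observation that $e_{d+1}\in G(\hat\Gamma^{(t)})$ comes for free from $(\phi(h),r)$ and $(\phi(h),r+1)$, without the auxiliary $u\in I_1$, is a harmless shortcut (it amounts to taking $u=1$ in the paper's argument) and does not change the route.
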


By Theorem \ref{ConeTheorem1}, 
\begin{equation}\label{eq16}
\lim_{i\rightarrow\infty} \frac{\# \Gamma_i^{(t)}}{i^d}={\rm vol}(\Delta(\Gamma^{(t)}))
\end{equation}
and
\begin{equation}\label{eq17}
\lim_{i\rightarrow\infty} \frac{\# \hat{\Gamma}^{(t)}_i}{i^d}={\rm vol}(\Delta(\hat{\Gamma}^{(t)})).
\end{equation}
We obtain the conclusions of Theorem \ref{Theorem1} from equations (\ref{eq12}), (\ref{eq16}) and (\ref{eq17}).

\begin{Theorem}\label{Theorem2} Suppose that $R$ is a local ring of dimension $d$, and $\{I_i\}$ is a graded family of $m_R$-primary ideals in $R$. 
Let $N$ be the nilradical of the $m_R$-adic completion $\hat R$ of $R$, and suppose that $\dim N<d=\dim R$. Then
$$
\lim_{i\rightarrow \infty}\frac{\ell_R(R/I_i)}{i^d}
$$
exists.
\end{Theorem}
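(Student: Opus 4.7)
My plan is to reduce Theorem \ref{Theorem2} to Theorem \ref{Theorem1} in two steps: first to the case where $R$ is complete and reduced, and then by induction on the number of minimal primes to the case of a complete local domain (automatically analytically irreducible). Since each $I_n$ is $m_R$-primary, $R/I_n \cong \hat R/I_n\hat R$ and $\{I_n\hat R\}$ is a graded family of $m_{\hat R}$-primary ideals, so I may replace $R$ by $\hat R$ and assume $R$ is complete, with nilradical $N$ satisfying $\dim N<d$. Tensoring $0\to N\to R\to R/N\to 0$ with $R/I_i$ gives the short exact sequence $0\to N/(N\cap I_i)\to R/I_i\to R/(N+I_i)\to 0$. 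Picking $c\in\ZZ_+$ with $m_R^c\subseteq I_1$ makes $I_i\supseteq m_R^{ci}$, so $N\cap I_i\supseteq m_R^{ci}N$, and Hilbert--Samuel applied to the finite $R$-module $N$ (of dimension $<d$) yields $\ell_R(N/(N\cap I_i))\leq \ell_R(N/m_R^{ci}N)=O(i^{\dim N})=o(i^d)$. Hence $\ell_R(R/I_i)$ and $\ell_{R/N}((R/N)/\overline{I_i})$, where $\overline{I_i}=(I_i+N)/N$, differ by $o(i^d)$, reducing the problem to the case that $R$ is complete and reduced.

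Suppose now $R$ is complete reduced of dimension $d$ with minimal primes $Q_1,\ldots,Q_s$; I would induct on $s$. The base case $s=1$ makes $R$ a complete local domain, hence analytically irreducible, so Theorem \ref{Theorem1} applies. For $s>1$, pick $Q=Q_1$ with $\dim R/Q=d$ and let $J=\bigcap_{k>1}Q_k$. Reducedness gives $Q\cap J=0$, producing
$$
0\to R\to R/Q\oplus R/J\to R/(Q+J)\to 0,
$$
with $\dim R/(Q+J)<d$, since every prime containing $Q+J$ properly contains the top-dimensional prime $Q$. Tensoring with $R/I_i$ and using additivity of length in the resulting long exact sequence yields
$$
\ell_R(R/I_i)=\ell_R((R/Q)/I_i(R/Q))+\ell_R((R/J)/I_i(R/J))-\ell_R(R/(Q+J+I_i))+\delta_i,
$$
where $\delta_i$ is the length of the image of the connecting map $\mathrm{Tor}_1^R(R/(Q+J),R/I_i)\to R/I_i$. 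Theorem \ref{Theorem1} handles the first summand, the inductive hypothesis applied to the complete reduced ring $R/J$ (which has $s-1$ minimal primes) handles the second, and Hilbert--Samuel on $R/(Q+J)$ bounds the third by $O(i^{\dim R/(Q+J)})=o(i^d)$.

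The main technical ingredient in the induction is the estimate $\delta_i\leq \ell_R\bigl(\mathrm{Tor}_1^R(R/(Q+J),R/I_i)\bigr)=o(i^d)$. Let $\mu_1=\mu(Q+J)$ denote the minimal number of generators of $Q+J$. From the first two terms $R^{\mu_1}\to R\to R/(Q+J)\to 0$ of a free resolution of $R/(Q+J)$, $\mathrm{Tor}_1^R(R/(Q+J),R/I_i)$ is a subquotient of $(R/I_i)^{\mu_1}$, hence generated by at most $\mu_1$ elements as an $R$-module. By functoriality of $\mathrm{Tor}$ it is annihilated by $Q+J$ and by $I_i$, so it is a quotient of $(R/(Q+J+I_i))^{\mu_1}$; therefore
$$
\ell_R\bigl(\mathrm{Tor}_1^R(R/(Q+J),R/I_i)\bigr)\leq \mu_1\cdot \ell_R(R/(Q+J+I_i))=O(i^{\dim R/(Q+J)})=o(i^d)
$$
by Hilbert--Samuel applied to the quotient $R/(Q+J)$ of dimension $<d$. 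This closes the induction, completing the proof modulo the invocation of Theorem \ref{Theorem1}.
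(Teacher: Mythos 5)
Your overall strategy (pass to the completion, strip off the nilradical, then reduce the reduced complete case to the domain case Theorem~\ref{Theorem1} and invoke that result) is the right shape, and the first two reduction steps are fine. The paper follows exactly the same outline, except that it handles the passage from a reduced complete ring to its quotients by the minimal primes by citing Lemma~\ref{Lemma5} (Lemma 5.1 of \cite{C1}), which asserts the $O(n^{d-1})$ comparison
$\bigl|\sum_i\ell_{R_i}(R_i/I_nR_i)-\ell_R(R/I_n)\bigr|\le \alpha n^{d-1}$
directly. You instead try to prove this comparison in-house by induction on the number of minimal primes via the Mayer--Vietoris sequence $0\to R\to R/Q\oplus R/J\to R/(Q+J)\to 0$. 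That substitution is where the gap appears.

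The problematic step is the claim that $\mathrm{Tor}_1^R(R/(Q+J),R/I_i)$, being a subquotient of $(R/I_i)^{\mu_1}$ with $\mu_1=\mu(Q+J)$, is ``hence generated by at most $\mu_1$ elements.'' A subquotient of a $\mu_1$-generated module need not be $\mu_1$-generated: $\mathrm{Tor}_1$ is the kernel of $\mathfrak a/\mathfrak aI_i\to R/I_i$ (with $\mathfrak a=Q+J$), i.e.\ a \emph{submodule} of a $\mu_1$-generated module, and submodules can require arbitrarily many generators. Concretely, in $R=k[[x,y]]$ with $\mathfrak a=(x)$ (so $\mu_1=1$) and $I=m_R^2$, one computes from the Koszul resolution $0\to R\xrightarrow{x}R\to R/(x)\to 0$ that
$\mathrm{Tor}_1^R(R/(x),R/m_R^2)=\ker(x\colon R/m_R^2\to R/m_R^2)=(m_R^2:x)/m_R^2=m_R/m_R^2$,
which is $2$-generated. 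Thus the inequality $\ell(\mathrm{Tor}_1)\le\mu_1\cdot\ell(R/(Q+J+I_i))$ does not follow from your argument, and without a uniform bound on the number of generators of $\mathrm{Tor}_1$ as $i$ varies you cannot conclude $\delta_i=o(i^d)$. The conclusion you want is true --- it is essentially the content of Lemma~\ref{Lemma5} --- but establishing it requires a finer argument (e.g.\ an Artin--Rees estimate for the kernel of $R/I_n\to\bigoplus R_i/I_nR_i$, together with the annihilator trick $I_nS\cap R\subseteq (I_n:\mathrm{Ann}(S/R))$), not a generator count on a subquotient. Either supply such an argument or invoke Lemma~\ref{Lemma5} as the paper does.
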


\begin{proof} Let $A=\hat R/N$. We have a short exact sequence of $\hat R$-modules
$$
0\rightarrow N/(N\cap I_i\hat R)\rightarrow \hat R/I_i\hat R\rightarrow A/I_iA\rightarrow 0.
$$
There exists a number $c$ such that $m_R^c\subset I_1$. Hence $m_R^{ci}N\subset N\cap I_i\hat R$ for all $i$, so that
$$
\ell_{\hat R}(N/N\cap I_i\hat R)\le \ell_{\hat R}(N/m_{\hat R}^{ci}N)\le \alpha i^{\dim N}
$$
for some constant $\alpha$. 
Hence

$$
\lim_{i\rightarrow \infty}\frac{\ell_R(R/I_i)}{i^d}=\lim_{i\rightarrow \infty}\frac{\ell_{\hat R}(\hat R/I_i\hat R)}{i^d}
=\lim_{i\rightarrow \infty}\frac{\ell_A(A/I_iA)}{i^d}.
$$

 Let $p_1,\ldots, p_s$ be the minimal primes of $A$, and $A_j=A/p_j$ for $1\le j\le s$.
By Lemma \ref{Lemma5} below,
$$
\lim_{i\rightarrow \infty}\frac{\ell_{A}(A/I_iA)}{i^d}=\sum_{j=1}^s\lim_{i\rightarrow \infty}\frac{\ell_{A_j}(A_j/I_iA_j)}{i^d}
$$
which exists by Theorem \ref{Theorem1}
\end{proof}

\begin{Lemma}\label{Lemma5}(Lemma 5.1 \cite{C1}) Suppose that $R$ is a $d$-dimensional reduced local ring and $\{I_n\}$ is a graded family of $m_R$-primary ideals in $R$,  Let $\{p_1,\ldots, p_s\}$ be the minimal primes of $R$, $R_i=R/p_i$, and let $S$ be the ring
$S=\bigoplus_{i=1}^sR_i$. Then there exists $\alpha\in \ZZ_+$ such that for all $n\in \ZZ_+$,
$$
|\sum_{i=1}^s\ell_{R_i}(R_i/I_nR_i)-\ell_R(R/I_n)|\le \alpha n^{d-1}.
$$
\end{Lemma}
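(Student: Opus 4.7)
The plan is to exploit the natural inclusion $R\hookrightarrow S$, which is injective because $R$ is reduced (so $\bigcap_{i=1}^s p_i=0$), and to control the defect coming from its cokernel $C=S/R$. I first observe that $\dim C<d$: for each minimal prime $p_i$ of $R$, the localization $(R/p_j)_{p_i}$ vanishes for $j\ne i$ (minimal primes are incomparable), so $S_{p_i}=(R/p_i)_{p_i}=R_{p_i}$ and thus $C_{p_i}=0$. Hence the support of the finitely generated $R$-module $C$ avoids every minimal prime of $R$, and $\dim C\le d-1$.

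From $0\to R\to S\to C\to 0$, tensoring with $R/I_n$ yields the four-term exact sequence
$$
0\to (R\cap I_nS)/I_n\to R/I_n\to S/I_nS\to C/I_nC\to 0,
$$
in which every term has finite length since $I_n$ is $m_R$-primary. Because $I_nS=\bigoplus_{i=1}^s I_nR_i$ and lengths over $R$ and over $R_i$ agree for $R_i$-modules (the residue fields coincide), $\ell_R(S/I_nS)=\sum_{i=1}^s\ell_{R_i}(R_i/I_nR_i)$, and the alternating sum of lengths gives
$$
\sum_{i=1}^s\ell_{R_i}(R_i/I_nR_i)-\ell_R(R/I_n)=\ell_R(C/I_nC)-\ell_R((R\cap I_nS)/I_n),
$$
so it suffices to bound each right-hand term by a constant multiple of $n^{d-1}$.

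Since $I_1$ is $m_R$-primary, there exists $c\in\ZZ_+$ with $m_R^c\subset I_1$, and hence $m_R^{cn}\subset I_n$ for all $n$. Thus $C/I_nC$ is a quotient of $C/m_R^{cn}C$, and since $\dim C\le d-1$, the Hilbert--Samuel polynomial of $C$ with respect to $m_R$ has degree at most $d-1$, producing a constant $\alpha_1$ with $\ell_R(C/I_nC)\le \alpha_1 n^{d-1}$ for all $n\ge 1$.

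The main obstacle is bounding $\ell_R((R\cap I_nS)/I_n)$, which I handle using the conductor $\mathfrak c=\{r\in R:rS\subset R\}$. Since $S_{p_i}=R_{p_i}$ at every minimal prime, $\mathfrak c\not\subset p_i$ for any $i$, so prime avoidance yields $f\in\mathfrak c$ lying in no minimal prime of $R$, i.e., a non-zero-divisor. For $x\in R\cap I_nS$, writing $x=\sum a_j s_j$ with $a_j\in I_n$, $s_j\in S$ gives $fx=\sum a_j(fs_j)\in I_n\cdot R=I_n$ (as each $fs_j\in R$), so $(R\cap I_nS)/I_n\subset(I_n\colon f)/I_n$. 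The exact sequence
$$
0\to(I_n\colon f)/I_n\to R/I_n\xrightarrow{f}R/I_n\to R/(I_n+fR)\to 0
$$
forces $\ell_R((I_n\colon f)/I_n)=\ell_R(R/(I_n+fR))\le\ell_R\bigl((R/fR)/m_R^{cn}(R/fR)\bigr)$. Because $f$ lies in no minimal prime of $R$, every minimal prime of $R/fR$ has positive height, so $\dim R/fR\le d-1$; a second application of Hilbert--Samuel bounds this length by $\alpha_2 n^{d-1}$. Setting $\alpha=\alpha_1+\alpha_2$ proves the lemma.
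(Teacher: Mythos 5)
Your proof is correct, and since the paper gives no argument for this lemma beyond the citation of Lemma 5.1 of \cite{C1}, it should be compared with that source: your route (the exact sequence $0\rightarrow R\rightarrow S\rightarrow C\rightarrow 0$ with $\dim C\le d-1$, the four-term sequence obtained by reducing mod $I_n$, and a non-zerodivisor $f$ in the conductor to bound $\ell_R((R\cap I_nS)/I_n)$ by $\ell_R(R/(I_n+fR))\le \alpha_2 n^{d-1}$) is essentially the same argument used there. All the key points check out: $\operatorname{Supp}(C)$ avoids the minimal primes so $\dim C\le d-1$, prime avoidance gives $f$ outside $p_1\cup\cdots\cup p_s$ so $\dim R/fR\le d-1$, and $m_R^{cn}\subset I_n$ supplies the Hilbert--Samuel bounds.
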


\section{A necessary and sufficient condition for limits to exist in a local ring}

Let $i_1=2$ and $r_1=\frac{i_1}{2}$. For $j\ge 1$, inductively define $i_{j+1}$ so that $i_{j+1}$ is even and $i_{j+1}>2^ji_j$.
Let $r_{j+1}=\frac{i_{j+1}}{2}$. For $n\in \ZZ_+$, define

\begin{equation}\label{eqsigma}
\sigma(n)=\left\{\begin{array}{ll}
1&\mbox{ if } n=1\\
\frac{i_j}{2} &\mbox{ if }i_j\le n<i_{j+1}
\end{array}\right.
\end{equation}

The limit
\begin{equation}\label{eqnr1}
\lim_{n\rightarrow \infty} \frac{\sigma(n)}{n}
\end{equation}
does not exist, even when $n$ is constrained to lie in an  arithmetic sequence (this is shown in Lemmas 6.1 and 6.2 \cite{C2}).
The following example, showing that limits might not exist on nonreduced local rings,  is Example  7.1 \cite{C2}.

\begin{Example}\label{Example1} Let $k$ be a field, $d>0$  and $R$ be the nonreduced $d$-dimensional local ring $R=k[[x_1,\ldots,x_d,y]]/(y^2)$.
There exists a  graded family of $m_R$-primary ideals $\{I_n\}$ in $R$ such that the limit
$$
\lim_{n\rightarrow \infty} \frac{\ell_R(R/I_n)}{n^d}
$$
does not exist, even when $n$ is constrained to lie in an arithmetic sequence.
\end{Example}

\begin{proof}  Let $\overline x_1, \ldots,\overline x_d,\overline y$ be the classes of $x_1,\ldots,x_d,y$ in $R$.
Let $N_i$ be the set of monomials of degree $i$ in the variables $\overline x_1,\ldots,\overline x_d$.
Let $\sigma(n)$ be the function defined in (\ref{eqsigma}).
Define $M_R$-primary ideals $I_n$ in $R$ by
 $I_n=(N_n,\overline yN_{n-\sigma(n)})$ for $n\ge 1$ (and $I_0=R$). 

We first verify that $\{I_n\}$ is a graded family of ideals, by showing that $I_mI_n\subset I_{m+n}$ for all $m,n>0$.
This follows since
$$
I_mI_n=(N_{m+n},\overline yN_{(m+n)-\sigma(m)}, \overline yN_{(m+n)-\sigma(n)})
$$
and  $\sigma(j)\le \sigma(k)$ for $k\ge j$.

$R/I_n$ has a $k$-basis consisting of 
$$
\{N_i\mid i<n\}\mbox{ and }\{\overline yN_j\mid j<n-\sigma(n)\}.
$$
Thus 
$$
\ell_R(R/I_n)=\binom{n}{d}+\binom{n-\sigma(n)}{d}.
$$
does not exist, even when $n$ is constrained to lie in an arithmetic sequence, by (\ref{eqnr1}).
\end{proof}

Hailong Dao and  and Ilya Smirnov have communicated to me that they have  also found this example, and have extended it to prove the following  theorem.

\begin{Theorem}\label{Theorem3}(Hailong Dao and Ilya Smirnov) Suppose that $R$ is a local ring of dimension $d>0$ with nilradical $N$. Suppose that for any graded family $\{I_i\}$ of $m_R$-primary ideals, the limit 
$$
\lim_{i\rightarrow\infty}\frac{\ell_R(R/I_n)}{n^d}
$$
exists.
Then $\dim N<d$.
\end{Theorem}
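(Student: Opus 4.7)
The strategy is to generalize Example \ref{Example1}. The essential preliminary step is to produce a nonzero $y \in R$ with $y^2 = 0$ and $\dim R/\mathrm{Ann}_R(y) = d$. Given such a $y$, set $I_0 = R$ and, for $n \geq 1$,
$$I_n = m_R^n + y\,m_R^{n-\sigma(n)},$$
where $\sigma$ is the function defined in (\ref{eqsigma}). Each $I_n$ is $m_R$-primary since $m_R^n \subset I_n$. Because $y^2 = 0$ and $\sigma$ is non-decreasing with $\sigma(n+m) \geq \max\{\sigma(n),\sigma(m)\}$, expanding $I_n I_m$ yields only the surviving terms $m_R^{n+m}$, $y\,m_R^{n+m-\sigma(n)}$, and $y\,m_R^{n+m-\sigma(m)}$, all of which lie in $I_{n+m}$, so $\{I_n\}$ is a graded family.

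To construct $y$, use that $\dim N = d$ to pick an associated prime $q$ of the $R$-module $N$ with $\dim R/q = d$, realized as $q = \mathrm{Ann}_R(x)$ for some $x \in N$; by dimension, $q$ is a minimal prime of $R$. The localization $R_q$ is Artinian local, and since $x/1$ is a nonzero nilpotent there, its maximal ideal $qR_q$ is nonzero. Let $k \geq 1$ be maximal with $(qR_q)^k \neq 0$ and choose any $\bar y \neq 0$ in $(qR_q)^k$; then $\bar y^2 \in (qR_q)^{2k} = 0$ and $\mathrm{Ann}_{R_q}(\bar y) = qR_q$. Lift $\bar y$ to $y_0 \in R$ and choose $t \in R \setminus q$ with $t y_0^2 = 0$. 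Setting $y = t y_0$ gives $y^2 = t(t y_0^2) = 0$; the image of $y$ in $R_q$ is a unit multiple of $\bar y$, hence nonzero, so $y \neq 0$ and $\mathrm{Ann}_R(y) \subset q$, whence $\dim R/\mathrm{Ann}_R(y) = d$.

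For the length, apply the short exact sequence
$$0 \to yR/(yR \cap I_n) \to R/I_n \to R/(yR + m_R^n) \to 0.$$
Since $y$ is nilpotent, $\dim R/yR = d$, so the right-hand length is a polynomial $P_1(n)$ of degree $d$ for $n \gg 0$. For the left-hand length, the Artin--Rees lemma applied to the submodule $yR \subset R$ gives a constant $c_0$ with $yR \cap m_R^n \subset y\,m_R^{n-c_0}$; for $n$ large enough that $\sigma(n) \geq c_0$, this forces $yR \cap I_n = y\,m_R^{n-\sigma(n)}$. Writing $\bar R := R/\mathrm{Ann}_R(y)$ and using the $R$-module isomorphism $yR \cong \bar R$ (which carries $y\,m_R^j$ to $m_{\bar R}^j$), the left-hand length becomes $\ell_{\bar R}(\bar R/m_{\bar R}^{n-\sigma(n)}) = P_2(n-\sigma(n))$, where $P_2$ is the Hilbert--Samuel polynomial of $\bar R$ and so has degree $d = \dim \bar R$ and strictly positive leading coefficient $\alpha_2$.

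Combining, for $n \gg 0$ one has $\ell_R(R/I_n) = P_1(n) + P_2(n - \sigma(n))$, and hence
$$\frac{\ell_R(R/I_n)}{n^d} - \alpha_1 - \alpha_2\Bigl(1 - \frac{\sigma(n)}{n}\Bigr)^d \longrightarrow 0$$
as $n \to \infty$, where $\alpha_1$ is the leading coefficient of $P_1$. Because $\alpha_2 > 0$ and, by Lemmas 6.1 and 6.2 of \cite{C2}, the ratio $\sigma(n)/n$ fails to converge along any arithmetic sequence, so does $\ell_R(R/I_n)/n^d$, contradicting the hypothesis. I expect the construction of $y$ to be the main obstacle: producing a square-zero nilpotent without collapsing the dimension of its annihilator is the delicate point, handled above by the localize--lift--clear maneuver.
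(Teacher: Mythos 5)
Your proposal is correct and follows essentially the same route as the paper: the same family $I_n=m_R^n+y\,m_R^{n-\sigma(n)}$, the same short exact sequence, Artin--Rees to identify $yR\cap I_n$, and Hilbert--Samuel polynomials to reduce nonconvergence to that of $\sigma(n)/n$. The only difference is your localize--lift--clear construction of the square-zero element, which is more elaborate than necessary: for an associated prime $q$ of $N$ with $\dim R/q=d$ and $x\in N$ with $\mathrm{Ann}_R(x)=q$, nilpotence of $x$ already gives $x\in q=\mathrm{Ann}_R(x)$, hence $x^2=0$, which is exactly the paper's shortcut.
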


\begin{proof} Suppose that $\dim N=d$. Let $p$ be a minimal  prime of $N$ such that $\dim R/p=d$. Then $N_p\ne 0$, so $p_p\ne 0$ in $R_p$. $p$ is an associated prime of $N$,  so there exists $0\ne x\in R$ such that $\mbox{ann}(x)=p$. $x\in p$, since otherwise 
$0=pxR_p=p_p$ which is impossible. In particular, $x^2=0$.

Let $f(n)=n-\sigma(n)$ be the function of (\ref{eqsigma}), and define $m_R$-primary ideals in $R$ by
$$
I_n=m_R^n+xm_R^{f(n)}.
$$
$\{I_n\}$ is a graded family of ideals in $R$ since 
$$
I_mI_n=(m_R^{m+n},xm_R^{(m+n)-\sigma(m)}, xm_R^{(m+n)-\sigma(n)})
$$
and  $\sigma(j)\le \sigma(k)$ for $k\ge j$. Let $\overline R=R/xR$. We have short exact sequences
\begin{equation}\label{eqnr2}
0\rightarrow xR/xR\cap I_n\rightarrow R/I_n\rightarrow \overline R/I_n\overline R\rightarrow 0.
\end{equation}
By Artin-Rees, there exists a number $k$ such that
$xR\cap m_R^n=m_R^{n-k}(xR\cap m_R^{n-k})$ for $n>k$. Thus $xR\cap m_R^n\subset xm_R^{f(n)}$ for $n\gg 0$ and $xR\cap I_n=xm_R^{f(n)}$ for $n\gg 0$.
We have that
$$
xR/xR\cap I_n\cong xR/xm_R^{f(n)}\cong R/({\rm ann}(x)+m_R^{f(n)})\cong R/p+m_R^{f(n)},
$$
so that  $\ell_R(xR/xR\cap I_n)=P_{R/p}(f(n))$  for $n\gg0$, where $P_{R/p}(n)$ is the Hilbert-Samuel polynomial of $R/p$. Hence
\begin{equation}\label{eqnr3}
\lim_{n\rightarrow \infty}\frac{\ell_R(xR/xR\cap I_n)}{n^d}=\frac{e(m_{R/p})}{d!}\lim_{n\rightarrow\infty}\left(\frac{f(n)}{n}\right)^d
\end{equation}
does not exist by (\ref{eqnr1}).
For $n\gg0$, 
$$
\ell_R(\overline R/I_n\overline R)=\ell_R(\overline R/m_{\overline R}^n)=P_{\overline R}(n)
$$
where $P_{\overline R}(n)$ is the Hilbert-Samuel polynomial of $\overline R$. Since $\dim \overline R\le d$, we have that
\begin{equation}\label{eqnr4}
\lim_{n\rightarrow \infty}\frac{\ell_R(\overline R/I_n\overline R)}{n^d}
\end{equation}
exists. Thus
$$
\lim_{n\rightarrow \infty}\frac{\ell_R(R/I_n)}{n^d}
$$
does not exist by (\ref{eqnr2}), (\ref{eqnr3}) and (\ref{eqnr4}).
\end{proof}

\begin{Theorem}\label{Theorem4} Suppose that $R$ is a local ring of dimension $d$, and  $N$ is the nilradical of the $m_R$-adic completion $\hat R$ of $R$.  Then   the limit 
$$
\lim_{i\rightarrow\infty}\frac{\ell_R(R/I_n)}{n^d}
$$
exists for any graded family $\{I_i\}$ of $m_R$-primary ideals, if and only if $\dim N<d$.
\end{Theorem}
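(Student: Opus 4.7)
The plan is to split the biconditional into sufficiency and necessity, each handled by an already-established result together with a short passage between $R$ and its completion $\hat R$.

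For sufficiency, assume $\dim N < d$. Given any graded family $\{I_n\}$ of $m_R$-primary ideals in $R$, the lengths $\ell_R(R/I_n) = \ell_{\hat R}(\hat R/I_n\hat R)$ are unchanged under completion (since each $I_n$ is $m_R$-primary), and $\{I_n\hat R\}$ is a graded family of $m_{\hat R}$-primary ideals in $\hat R$. Thus the conclusion is an immediate application of Theorem \ref{Theorem2} to the completion $\hat R$.

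For necessity, assume $\dim N \ge d$; since $\dim\hat R = d$, this forces $\dim N = d$. Applying Theorem \ref{Theorem3} to the local ring $\hat R$ produces a graded family $\{J_n\}$ of $m_{\hat R}$-primary ideals for which
$$
\lim_{n\to\infty}\frac{\ell_{\hat R}(\hat R/J_n)}{n^d}
$$
fails to exist. The key step is to lift this family back to $R$. Define $I_n = J_n \cap R$ for $n \ge 1$ and $I_0 = R$. Then (i) each $I_n$ is $m_R$-primary, because $m_{\hat R}^c \subset J_n$ for some $c$ implies $m_R^c = m_{\hat R}^c \cap R \subset I_n$; (ii) $\{I_n\}$ is a graded family, since
$$
I_mI_n \subset (I_m\hat R)(I_n\hat R)\cap R \subset J_mJ_n \cap R \subset J_{m+n}\cap R = I_{m+n};
$$
and (iii) the extension $I_n\hat R$ equals $J_n$. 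For (iii), one uses that $J_n$ contains $m_R^c\hat R$, that $\hat R/m_R^c\hat R \cong R/m_R^c$, and that under this isomorphism the image of $J_n/m_R^c\hat R$ is exactly $I_n/m_R^c$, so $J_n = I_n + m_R^c\hat R \subset I_n\hat R$, with the reverse inclusion being trivial. Combining (iii) with the primary hypothesis gives
$$
\ell_R(R/I_n) = \ell_{\hat R}(\hat R/I_n\hat R) = \ell_{\hat R}(\hat R/J_n),
$$
so the limit fails to exist for $\{I_n\}$ in $R$ as well.

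The step I expect to require the most care is the verification $I_n\hat R = J_n$; without it the equality of lengths would be lost, and the naive inclusion $I_n\hat R \subset J_n$ is too weak to transfer nonexistence of the limit back to $R$. Once this lifting identity is in hand, the rest of the argument is bookkeeping, and the cases $d = 0$ (where the statement reduces to an elementary observation about artinian rings and their completions) can be noted as trivial or dispatched separately.
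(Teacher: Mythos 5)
Your proposal is correct and takes essentially the same route as the paper: sufficiency from Theorem \ref{Theorem2}, necessity for $d>0$ by applying Theorem \ref{Theorem3} to $\hat R$ and lifting the resulting family back to $R$, and your verification that $I_n\hat R=J_n$ (via $m_R^c\subset I_n$ and $R+m_{\hat R}^c=\hat R$) correctly supplies the lifting detail that the paper only asserts. The one point you wave off, the case $d=0$ with $N\neq 0$, does require an explicit (though easy) construction, which the paper gives: choose $t$ with $m_R^t\neq 0$ and $m_R^{t+1}=0$ and set $I_n=m_R^{t+\tau(n)}$ with $\tau$ the oscillating function of (\ref{eqtau}), so that $\ell_R(R/I_n)$ takes two distinct values infinitely often and has no limit.
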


\begin{proof} Sufficiency  follows from Theorem \ref{Theorem2}. Necessity  follows from  Theorem \ref{Theorem3} if $d>0$, since a family of 
$m_{\hat R}$-primary ideals in $\hat R$ naturally lifts to a graded family of $m_R$-primary ideals in $R$.

In the case when $d=0$ and $N\ne 0$, $R$ is an Artin local ring. Thus there exists some number $0<t$ such that $m_R^t\ne 0$ but $m_R^{t+1}=0$. With the notation before (\ref{eqsigma}), let
\begin{equation}\label{eqtau}
\tau(n)=\left\{\begin{array}{ll} 0&\mbox{ if $i_j\le n\le i_{j+1}$ and $j$ is even}\\
1&\mbox{ if $i_j\le n\le i_{j+1}$ and $j$ is odd.}\end{array}\right.
\end{equation}
Define a graded family of $m_R$-primary ideals $\{I_n\}$ in $R$ by $I_n=m_R^{t+\tau(n)}$. Then $\lim_{n\rightarrow \infty}\ell_R(R/I_n)$ does not exist.

\end{proof}

\begin{Corollary}\label{Cornr20}
Suppose that $R$ is an excellent local ring of dimension $d$, and  $N(R)$ is the nilradical of  $R$.  Then   the limit 
$$
\lim_{i\rightarrow\infty}\frac{\ell_R(R/I_n)}{n^d}
$$
exists for any graded family $\{I_i\}$ of $m_R$-primary ideals, if and only if $\dim N(R)<d$.
\end{Corollary}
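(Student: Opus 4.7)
The plan is to deduce this corollary directly from Theorem \ref{Theorem4} by showing that, under the excellence hypothesis, the nilradical of $R$ and the nilradical $N$ of $\hat R$ have the same dimension. Once we establish $\dim N = \dim N(R)$, the condition ``$\dim N < d$'' appearing in Theorem \ref{Theorem4} becomes the condition ``$\dim N(R) < d$'' stated in the corollary, and there is nothing further to prove.

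The main step is to verify that $N(\hat R) = N(R)\hat R$ for an excellent local ring $R$. Since $R$ is excellent, the canonical map $R \to \hat R$ is a regular homomorphism; in particular its fibers are geometrically reduced. Applied to the excellent (reduced) ring $R/N(R)$, this gives that the completion $\widehat{R/N(R)} \cong \hat R / N(R)\hat R$ is reduced. Hence $N(\hat R) \subseteq N(R)\hat R$. The reverse inclusion is immediate because nilpotents of $R$ remain nilpotent in $\hat R$, so $N(R)\hat R \subseteq N(\hat R)$. Therefore $N = N(\hat R) = N(R)\hat R$.

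Next I would show $\dim N(\hat R) = \dim N(R)$. Because $\hat R$ is faithfully flat over $R$, $N(R)\hat R \cong N(R)\otimes_R \hat R$, and for any finitely generated $R$-module $M$ one has $\dim M = \dim(M\otimes_R\hat R)$ (the support of $M\otimes_R\hat R$ lies over the support of $M$ via the faithfully flat local homomorphism $R\to\hat R$, and dimensions of a local ring and its completion agree). Applying this with $M = N(R)$ yields $\dim N(R)\hat R = \dim N(R)$, and combined with the previous paragraph, $\dim N = \dim N(R)$.

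Finally, invoke Theorem \ref{Theorem4}: the limit exists for every graded family of $m_R$-primary ideals iff $\dim N < d$, which by the above is iff $\dim N(R) < d$. The only subtle point is the first step—the equality $N(\hat R) = N(R)\hat R$—which fails for general (non-excellent) local rings, so the excellence assumption is essential and is used in exactly one place, namely the geometric reducedness of the formal fibers of $R \to \hat R$.
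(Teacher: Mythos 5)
Your proposal is correct and follows essentially the same route as the paper: both reduce to Theorem \ref{Theorem4} by showing $N(\hat R)=N(R)\hat R$ (using that the excellent reduced ring $R/N(R)$ is analytically unramified, i.e. Scholie IV.7.8.3 of \cite{EGAIV}) and then that $\dim N(\hat R)=\dim N(R)$. The only cosmetic difference is that you obtain the dimension equality via faithful flatness of $R\rightarrow\hat R$, while the paper compares the associated graded modules $\mbox{gr}_{m_{\hat R}}(N(\hat R))$ and $\mbox{gr}_{m_R}(N(R))$; both are standard and equivalent.
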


\begin{proof} Let $N(\hat R)$ be the nilradical of $\hat R$. $\widehat{(R/N(R)}\cong \hat R/N(R)\hat R$ is reduced since $R/N(R)$ is (by Scholie IV.7.8.3 \cite{EGAIV}). Since $N(R)\hat R\subset N(\hat R)$, we have that $N(\hat R)=N(R)\hat R$. Thus $\mbox{gr}_{m_{\hat R}}(N(\hat R)) = \mbox{gr}_{m_R}(N(R)$,
so $\dim N(\hat R)=\dim N(R)$. Now the corollary follows from Theorem \ref{Theorem4}.
\end{proof}

\begin{Example} For any $d\ge 1$, there exists a local domain $R$ of dimension $d$ with a graded family of $m_R$-primary ideals $\{I_n\}$ such that the limit
$$
\lim_{n\rightarrow \infty}\frac{\ell_R(R/I_n)}{n^d}
$$
does not exist.
\end{Example}
\begin{proof}
The example of (E3.2) in \cite{N} is of a local domain $R$ such that the nilradical of $\hat R$ has dimension $d$. The example then follows from
Theorem \ref{Theorem3}  by lifting an appropriate graded family of $m_{\hat R}$-primary ideals to $R$.
\end{proof}

In Section 4 of \cite{CDK}, a series of examples of graded families of $m_R$-primary ideals in a regular local ring $R$ of dimension two are given which have asymptotic growth of the rate $n^{\alpha}$, where $\alpha$ can be any rational number $1\le \alpha\le 2$. An example, also in a regular local ring of dimension two,  with growth  rate $n\mbox{log}_{10}(n)$ is given. Thus we generally do not have  a polynomial rate of growth.

\section{Proper $k$-schemes}\label{SecProp}

Suppose that $X$ is a $d$-dimensional proper scheme over a field $k$, and $\mathcal L$ is a line bundle on $X$. 
Then under the natural inclusion of rings $k\subset \Gamma(X,\mathcal O_X)$, we have that the section ring
$$
\bigoplus_{n\ge 0}\Gamma(X,\mathcal L^n)
$$
is a graded $k$-algebra. Each $\Gamma(X,\mathcal L^n)$ is a finite dimensional $k$-vector space since $X$ is proper over $k$. In particular, $\Gamma(X,\mathcal O_X)$ is an Artin ring.     A graded $k$-subalgebra $L=\bigoplus_{n\ge 0}L_n$ of a section ring of a line bundle $\mathcal L$ on $X$ is called a {\it graded linear series} for $\mathcal L$. 

 We define the {\it Kodaira-Iitaka dimension} $\kappa=\kappa(L)$ of a graded linear series $L$  as follows.
Let
$$
\sigma(L)=\max \left\{m\mid 
\begin{array}{l}
 \mbox{there exists $y_1,\ldots,y_m\in L$ which are homogeneous of  positive}\\
\mbox{degree and are algebraically independent over $k$}
\end{array}\right\}.
$$
$\kappa(L)$ is then defined as
$$
\kappa(L)=\left\{\begin{array}{ll}
\sigma(L)-1 &\mbox{ if }\sigma(L)>0\\
-\infty&\mbox{ if }\sigma(L)=0
\end{array}\right.
$$

This definition is in agreement with the classical definition for line bundles on projective varieties (Definition in Section 10.1 \cite{I} or Chapter 2 \cite{La}). Some of its properties and pathologies (on nonreduced schemes) are discussed in \cite{C2}. The following Lemma is proven in Lemma 2.1 \cite{C2} for projective schemes.

\begin{Lemma}\label{LemmaKI} Suppose that $L$ is a graded linear series on a $d$-dimensional proper scheme $X$ over a field $k$. Then
\begin{enumerate}\item[1)]
\begin{equation}\label{eqKI1}
\kappa(L)\le d=\dim X.
\end{equation}
\item[2)] There exists a positive constant $\gamma$ such that 
\begin{equation}\label{eqKI4}
\dim_k L_n<\gamma n^d
\end{equation}
for all $n$.
\item[3)] Suppose that $\kappa(L)\ge 0$. Then there exists a positive constant $\alpha$ and a positive integer $e$ such that 
\begin{equation}\label{eqKI2}
\dim_kL_{en}>\alpha n^{\kappa(L)}
\end{equation}
for all positive integers $n$. 
\item[4)] Suppose that $X$ is reduced and $L$ is a graded linear series on $X$. Then $\kappa(L)=-\infty$ if and only if $L_n=0$ for all $n>0$.
\end{enumerate}
\end{Lemma}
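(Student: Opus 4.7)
The plan is to extend the projective case (Lemma 2.1 of \cite{C2}) to the proper case. Parts 3) and 4) are essentially algebraic and transfer verbatim from the projective case; 1) will be deduced from 2) by a monomial-counting argument; only the extension of 2) to proper schemes requires genuinely new input, via Chow's lemma.

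For 2), since $L_n \subseteq \Gamma(X, \mathcal{L}^n)$, it suffices to show $\dim_k \Gamma(X, \mathcal{L}^n) = O(n^d)$ for any line bundle $\mathcal{L}$ on a proper scheme $X$ of dimension $d$. The nilradical filtration $0 = \mathcal{N}_X^{m+1} \subset \mathcal{N}_X^m \subset \cdots \subset \mathcal{N}_X \subset \mathcal{O}_X$, tensored with the flat line bundle $\mathcal{L}^n$, yields
\[
\dim_k \Gamma(X, \mathcal{L}^n) \le \sum_{i=0}^{m} \dim_k \Gamma\bigl(X_{\rm red}, (\mathcal{N}_X^i/\mathcal{N}_X^{i+1}) \otimes (\mathcal{L}|_{X_{\rm red}})^n\bigr),
\]
reducing the problem to bounding $\dim_k \Gamma(Y, \mathcal{F} \otimes \mathcal{M}^n) = O(n^d)$ for $Y$ a reduced proper scheme of dimension $\le d$, $\mathcal{F}$ a coherent sheaf, and $\mathcal{M}$ a line bundle. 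Decomposing $Y$ into irreducible components (with pairwise intersections of strictly smaller dimension, handled by induction on $d$) reduces further to the case of a proper variety $Y$. By Chow's lemma pick a birational, surjective, proper $\pi : Y' \to Y$ with $Y'$ projective. For the torsion-free quotient $\mathcal{G}$ of $\mathcal{F}$, the map $\mathcal{G} \to \pi_*\pi^*\mathcal{G}$ is an isomorphism at the generic point of $Y$, and torsion-freeness forces any section to be determined by its value there, so $\mathcal{G} \hookrightarrow \pi_*\pi^*\mathcal{G}$ is injective. Combined with the projection formula for the locally free $\pi^*\mathcal{M}^n$, this gives an injection $\Gamma(Y, \mathcal{G} \otimes \mathcal{M}^n) \hookrightarrow \Gamma(Y', \pi^*\mathcal{G} \otimes (\pi^*\mathcal{M})^n)$, whose right side is $O(n^d)$ on the projective variety $Y'$ by Snapper's theorem. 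The torsion part of $\mathcal{F}$ has strictly smaller-dimensional support and is handled by the induction on $d$.

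Given 2), part 1) follows by contradiction: if $\kappa(L) \ge d + 1$ there exist $y_1, \ldots, y_{d+2} \in L$ algebraically independent over $k$ and homogeneous of positive degrees $n_i$, and since $L$ is a $k$-subalgebra of the section ring, the monomials in the $y_i$ give $\dim_k L_N \ge cN^{d+1}$ for some $c > 0$, contradicting 2) for $N$ large. For 3), with $\kappa = \kappa(L) \ge 0$, pick algebraically independent homogeneous $y_1, \ldots, y_{\kappa+1} \in L$ of positive degrees $n_i$, set $e = n_1 \cdots n_{\kappa+1}$, and replace $y_i$ by $y_i' = y_i^{e/n_i}$, which are still algebraically independent and now have common degree $e$; then the $\binom{n+\kappa}{\kappa}$ distinct monomials of total degree $n$ in the $y_i'$ are linearly independent elements of $L_{en}$, yielding $\dim_k L_{en} > \alpha n^\kappa$ for a suitable $\alpha > 0$. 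For 4), the implication $L_n = 0 \ \forall n > 0 \Rightarrow \kappa(L) = -\infty$ is trivial; conversely, if $X$ is reduced and $0 \ne s \in L_n$ with $n > 0$, then in the graded section ring $R = \bigoplus_m \Gamma(X, \mathcal{L}^m)$ any polynomial relation $\sum a_i s^i = 0$ decomposes by grading into $a_i s^i = 0$, so $s$ is algebraically independent over $k$ if and only if it is not nilpotent in $R$; and $X$ reduced forces $R$ to be reduced (a section of a line bundle with $s^N = 0$ must vanish on every local trivialization of the reduced scheme), so $\kappa(L) \ge 0$.

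The main obstacle is the second step (extending 2) from projective to proper): the individual tools (Chow's lemma, the projection formula, Snapper's theorem) are all classical, but handling nilpotents, singularities, and non-irreducibility simultaneously forces the reduction to pass through the nilradical filtration and the irreducible-component decomposition while tracking general coherent sheaves rather than just line bundles throughout.
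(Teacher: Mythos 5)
Your proof is correct in substance and, for the one part that genuinely needs new input (part 2), it runs on the same engine as the paper's: Chow's lemma to compare with a projective model, injectivity into the pushforward plus the projection formula to transfer sections, and dimension induction to absorb the discrepancy. The difference is organizational: the paper proves $\dim_k\Gamma(X,\mathcal M\otimes\mathcal L^n)<\gamma n^{\dim \mathcal M}$ for an arbitrary coherent sheaf on an arbitrary proper scheme by a single induction on $\dim\mathcal M$, applying Chow's lemma directly to the (possibly nonreduced) scheme-theoretic support of $\mathcal M$ and using that the kernel of $\mathcal M\rightarrow\phi_*\phi^*\mathcal M$ has strictly smaller support; you instead strip nilpotents with the $\mathcal N_X$-filtration, split into irreducible components, and pass to the torsion-free quotient so that Chow's lemma is only applied to varieties. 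That buys you more familiar objects (varieties, torsion-free sheaves, injectivity of $\mathcal G\rightarrow\pi_*\pi^*\mathcal G$) at the cost of extra d\'evissage, and it creates a small wrinkle: the kernel and torsion sheaves your reductions produce have lower-dimensional but possibly nonreduced scheme-theoretic support, so an induction hypothesis stated only for reduced ambient schemes does not apply verbatim --- the cleaner formulation is to run the induction on the dimension of the support of a coherent sheaf on an arbitrary proper scheme (as the paper does), or to insert one more filtration. A second point to tighten: Snapper's theorem controls Euler characteristics, not $h^0$, and $\pi^*\mathcal M$ need not be nef or ample, so the projective-case estimate $\dim_k\Gamma(Y',\mathcal H\otimes\mathcal A^n)=O(n^d)$ for an arbitrary line bundle $\mathcal A$ still needs the standard argument (the paper filters $\mathcal H$ by sheaves $\mathcal O_{Y_i}(n_i)$ on integral subschemes via Proposition 7.4 of \cite{H} and compares with an ample twist); quote or reproduce that rather than Snapper alone. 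Your deduction of 1) from 2) and your direct proofs of 3) and 4) are fine: the paper obtains 1) the same way, by embedding a weighted polynomial ring $k[x_0,\ldots,x_{\kappa}]$ into $L$ and using the Hilbert polynomial of the weighted projective space (which is your monomial count in disguise), and for 3) and 4) it simply cites Lemma 2.1 of \cite{C2}, observing that those proofs do not use projectivity.
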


\begin{proof} The proof of statements 3) and 4) in Lemma 2.1 \cite{C2} do not use the assumption that $X$ is projective, so we need only establish 1) and 2).  
Formula 2) follows from the following formula: Suppose that $\mathcal M$ is a coherent sheaf on $X$. Then there exists a positive constant $\gamma$ such that 
\begin{equation}\label{eqF}
\dim_k \Gamma(X,\mathcal M\otimes \mathcal L^n)<\gamma n^d
\end{equation}
 for all positive $n$.
 
 We first prove (\ref{eqF}) when $X$ is projective. Let $\mathcal O_X(1)$ be a very ample line bundle on $X$. By Proposition 7.4 \cite{H}, there
 exists a finite filtration of $\mathcal M$ by coherent sheaves $\mathcal M^i$ with quotients $\mathcal M^i/\mathcal M^{i-1}\cong \mathcal O_{Y_i}(n_i)$,
 where $Y_i$ are closed integral subschemes of $X$ and $n_i\in \ZZ$. There exists a number $c>0$ such that $\mathcal L\otimes \mathcal O_{Y_i}(c)$ is
 ample for all $i$. Let $\mathcal A=\mathcal O_X(n)\otimes \mathcal L$,
 where $n=c+\max\{|n_i|\}$. For all $i$ and positive $n$, we have 
 $$
 \dim_k\Gamma(X,(\mathcal M_i/\mathcal M_{i-1})\otimes \mathcal L^n)\le \dim_k\Gamma(Y_i,\mathcal O_{Y_i}\otimes \mathcal A^n). 
 $$
 This last is a polynomial in $n$ of degree equal to $\dim Y_i$ for large $n$ (by Proposition 8.8a \cite{I}).
 Thus we obtain the formula (\ref{eqF}) in the case that $X$ is projective. 
 
 Now suppose that $X$ is proper over $k$. We prove the formula by induction on $\dim \mathcal M$. If $\dim \mathcal M=0$, then $\dim_k\Gamma(X,\mathcal M)<\infty$, and $\mathcal M\otimes\mathcal L^n\cong\mathcal M$ for all $n$, so (\ref{eqF}) holds.
 Suppose that $\dim \mathcal M=e\,(\le d)$ and the formula is true for coherent $\mathcal O_X$-modules whose support has dimension $<e$. Let $\mathcal I$ be the sheaf of ideals on $X$ defined for $\eta\in X$ by
 $$
 \mathcal I_{\eta}=\{f\in \mathcal O_{X,\eta}\mid f\mathcal M_{\eta}=0\}.
 $$
 Let $Y=\mbox{Spec}(\mathcal O_X/\mathcal I)$, a closed subscheme of $X$. $\mathcal M$ is a coherent $\mathcal O_Y$-module, and
 $Y$ and $\mathcal M$ have the same support, so $\dim Y=e$. By Chow's Lemma, there exists a proper morphism
 $\phi:Y'\rightarrow Y$ such that $Y'$ is projective over $k$ and $\phi$ is an isomorphism over an open dense subset of $Y$. Ket $\mathcal K$ be the kernel of the natural morphism of $\mathcal O_Y$-modules
 $$
 \mathcal M\rightarrow \phi_*\phi^*\mathcal M.
 $$
 $\dim\mathcal K<e$ since $\phi$ is an isomorphism over a dense open subset of $Y$. Let $\mathcal L'=\phi^*(\mathcal L\otimes\mathcal O_Y)$. We have inequalities
 $$
 \dim_k\Gamma(X,\mathcal M\otimes\mathcal L^n)\le \dim_k\Gamma(X,\mathcal K\otimes\mathcal L^n)+\dim_k\Gamma(Y',\phi^*(\mathcal M)\otimes(\mathcal L')^n)
 $$
 for all $n\ge 0$, so we get the desired upper bound of (\ref{eqF}).

Now we establish 1). Let $\kappa:=\kappa(L)$. Then there exists an inclusion of a weighted polynomial ring $k[x_0,\ldots,x_{\kappa}]$ into $L$. 
Let $f$ be the least common multiple of the degrees of the $x_i$. Let $Z=\mbox{Proj}(k[x_0,\ldots,x_{\kappa}])$. $\mathcal O_Z(f)$ is an ample line bundle on the $\kappa$-dimensional weighted projective space $Z$. Thus there exists a polynomial $Q(n)$ of degree $\kappa$ such that
$$
\dim_kk[x_0,\ldots,x_n]_{nf}=\dim_k\Gamma(Z,\mathcal O_Z(nf))=Q(n)\mbox{ for all }n\gg 0.
$$
Thus there exists a positive constant $\alpha$ such that 
$$
\dim_kL_{nf}\ge \alpha n^{\kappa}\mbox{ for }n\gg 0,
$$
whence $\kappa(L)\le d$ by 2).

\end{proof}

We will show in Theorem \ref{Theorem8} that (\ref{eqKI4}) of Lemma \ref{LemmaKI} can be sharpened to the statement that there exists a positive constant $\gamma$ such that
\begin{equation}\label{eqN1}
\dim_kL_n<\gamma n^e
\end{equation}
where $e=\max\{\kappa(L),\dim \mathcal N_X\}$. $\mathcal N_X$ is the nilradical of $X$ (defined in the section on notations and conventions).
By Theorem \ref{TheoremN1}, (\ref{eqN1}) is the best bound possible.

\section{Limits of graded linear series on proper varieties over a field}\label{SecLim}

Suppose that $L$ is a graded linear series on a proper variety $X$ over a field $k$. 
The {\it index} $m=m(L)$ of $L$ is defined as the index of groups
$$
m=[\ZZ:G]
$$
where $G$ is the subgroup of $\ZZ$ generated by $\{n\mid L_n\ne 0\}$.

The following theorem has been proven by  Okounkov  \cite{Ok} for section rings of ample line bundles,  Lazarsfeld and Mustata \cite{LM} for section rings of big line bundles, and for graded linear series by Kaveh and Khovanskii \cite{KK}. All of these proofs require the assumption that {\it $k$ is  algebraically closed}. The theorem has been proven by the author when $k$ is a perfect field in \cite{C2}. We prove the result here for an arbitrary base field $k$.

\begin{Theorem}\label{Theorem5}  Suppose that $X$ is a $d$-dimensional proper variety over a field $k$, and $L$ is a graded linear series on $X$ with Kodaira-Iitaka dimension  $\kappa=\kappa(L)\ge 0$. Let $m=m(L)$ be the index of $L$.  Then  
$$
\lim_{n\rightarrow \infty}\frac{\dim_k L_{nm}}{n^{\kappa}}
$$
exists. 
\end{Theorem}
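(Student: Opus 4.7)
The plan is to apply the valuation-theoretic semigroup method outlined in the introduction, using a rank-$d$ valuation on $K(X)$ centered at a regular closed point of $X$, and then invoke the Kaveh-Khovanskii counting theorem (Theorem~\ref{ConeTheorem3}). The novelty over \cite{C2}, which treats perfect base fields, is that I only arrange $[V_\nu/m_\nu:k]<\infty$ rather than $V_\nu/m_\nu=k$; this forces a stratification of the associated graded by $k$-dimension, exactly analogous to the one used in the proof of Theorem~\ref{Theorem1}.

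I begin by choosing a closed point $Q\in X$ at which $\mathcal O_{X,Q}$ is a regular local ring of dimension $d$. Since the smooth locus of the variety $X$ is a dense open subset and $X$ is proper over $k$, such a $Q$ exists and satisfies $[k(Q):k]<\infty$. Using a regular system of parameters $y_1,\dots,y_d$ at $Q$ and the lexicographic order on $\ZZ^d$, I construct the rank-$d$ valuation $\nu$ of $K(X)$ exactly as in the proof of Theorem~\ref{Theorem1}, so that $V_\nu$ dominates $\mathcal O_{X,Q}$ with residue field $k(Q)$. Fixing a line bundle $\mathcal L$ for which $L$ is a graded linear series, and a generator of the free $\mathcal O_{X,Q}$-module $\mathcal L_Q$, the trivialization gives $k$-linear embeddings $L_n\hookrightarrow \mathcal O_{X,Q}\subset K(X)$ compatible with multiplication. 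With $K_{\vec n}=\{f\in K(X):\nu(f)\ge\vec n\}$ and $K_{\vec n}^+$ the strict version, I define, for each $1\le t\le[k(Q):k]$,
\[S^{(t)}=\{(\vec n,i)\in\NN^d\times\NN : \dim_k(L_i\cap K_{\vec n}/L_i\cap K_{\vec n}^+)\ge t\}.\]
Arguing as in Lemma~\ref{Lemmared1}, each $S^{(t)}$ is a subsemigroup of $\ZZ^{d+1}$, and the stratification of the associated graded yields
\[\dim_k L_i=\sum_{t=1}^{[k(Q):k]}\#S^{(t)}_i.\]

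To invoke Theorem~\ref{ConeTheorem3}, I verify strong nonnegativity of each $S^{(t)}$ by embedding it in a finitely generated subsemigroup $B\subset\NN^{d+1}$ generated by vectors $(v_j,1)$; the required bound on $\nu(s)$ for $s\in L_n$ comes from Lemma~\ref{LemmaKI}(2) together with a Chow's-Lemma reduction to a projective birational model of $X$. The index equality $m(S^{(t)})=m(L)$ follows from the multiplication argument of Lemma~\ref{Lemmared1} applied to a witness for $S^{(t)}$ together with nonzero sections in $L_j$ for $j$ ranging over the numerical semigroup $\{j\ge 1:L_j\ne 0\}$, forcing $\pi(G(S^{(t)}))=m(L)\ZZ$. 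Theorem~\ref{ConeTheorem3} then yields the existence of $\lim_n \#S^{(t)}_{m(L)n}/n^{q(S^{(t)})}$ for each nonempty $S^{(t)}$, and summing over $t$ (with the $q(S^{(t)})<\kappa(L)$ contributions being $o(n^{\kappa(L)})$) gives the theorem. The main obstacle is the identification $q(S^{(t)})=\kappa(L)$ for the dominant semigroups: this requires translating the algebraic definition of Kodaira-Iitaka dimension, as the transcendence degree minus one of the $k$-subalgebra of $L$ generated by homogeneous elements, into the geometric statement that $\dim\Delta(S^{(t)})=\kappa(L)$. The Kaveh-Khovanskii argument for this uses a rank-$d$ valuation separating algebraically independent elements, and the adaptation here must carefully track the additional data of residue classes in the finite extension $k(Q)/k$, which is where the substantively new content beyond \cite{C2} lies.
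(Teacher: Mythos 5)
Your overall architecture is the same as the paper's (regular closed point $Q$, rank-$d$ lex valuation with $[V_\nu/m_\nu:k]<\infty$, the stratified semigroups $S^{(t)}$, and the Kaveh--Khovanskii counting theorem), but two steps that carry the real weight are not actually established. First, strong nonnegativity: you propose to verify condition (\ref{Cone2}) by bounding $\nu(s)$ for $s\in L_n$, and you claim this bound ``comes from Lemma \ref{LemmaKI}(2).'' Lemma \ref{LemmaKI}(2) bounds $\dim_k L_n$, which gives no control whatsoever on the size of the valuation vectors of individual sections; bounding the lex-valued $\nu(s)$ componentwise linearly in $n$ is a genuinely nontrivial statement (in Lazarsfeld--Mustata it requires intersection-theoretic arguments along a global flag, and here the ``flag'' is only infinitesimal at $Q$ with a residue extension). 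The paper avoids this entirely: after reducing to $X$ projective by Chow's Lemma, it embeds every $S(L)^{(t)}$ into the semigroup $T$ attached to the section ring $A=\bigoplus_n\Gamma(X,\mathcal O_X(nH))$ of a very ample $H$ with $\mathcal O_X(D)\subset\mathcal O_X(H)$, shows $G(T)=\ZZ^{d+1}$ and $q(T)=d$, bounds $\#T_n\le[k'':k]P_A(n)$, and invokes Theorem 1.18 of \cite{KK} to get strong nonnegativity of $T$ and hence of its subsemigroups. A related slip: you justify the existence of $Q$ by density of the \emph{smooth} locus, but over a nonperfect field the smooth locus can be empty; what is true (and what the paper cites, via \cite{Z}) is that a variety over any field has a regular closed point. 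Since the nonperfect case is exactly the new content of this theorem, this distinction is not cosmetic.

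Second, and more seriously, the identification $q(S(L)^{(t)})=\kappa(L)$ is the heart of the theorem (without it you only get existence of $\lim_n \#S^{(t)}_{nm}/n^{q}$ for an unidentified $q$, not the statement with $n^{\kappa}$), and your proposal does not prove it: you name it as ``the main obstacle'' and describe what an adaptation of Kaveh--Khovanskii would have to track, but give no argument. The paper closes this gap concretely: it passes to the finitely generated subalgebras $L^{[pm]}=k[L_{pm}]$, uses the Hilbert polynomial of $Y_{pm}$ and graded Noether normalization to get $\dim Y_{pm}=\kappa(L^{[pm]})$, uses the sandwich $\frac{1}{[k':k]}\dim_kL^{[pm]}_{npm}\le\#\bigl(S(L^{[pm]})^{(1)}_{npm}\bigr)\le\dim_kL^{[pm]}_{npm}$ together with Theorem \ref{ConeTheorem3} to conclude $q(S(L^{[pm]})^{(1)})=\kappa(L^{[pm]})$, and then uses Lemma \ref{Lemmaproj1} plus a cone-dimension stability argument to transfer this to $q(L)=\kappa(L)$; note also that by Lemma \ref{Lemmaproj1} all nonzero strata have the \emph{same} $q$ and the same index, so your picture of lower-dimensional strata contributing $o(n^{\kappa})$ is not how the bookkeeping actually goes. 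As it stands, the proposal is an accurate outline of the strategy but leaves both the strong nonnegativity input and the equality $q=\kappa$ unproven.
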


In particular, from the definition of the index, we have that the limit
$$
\lim_{n\rightarrow \infty}\frac{\dim_k L_{n}}{{n}^{\kappa}}
$$
exists, whenever $n$ is constrained to lie in an arithmetic sequence $a+bm$ ($m=m(L)$ and $a$ an arbitrary but fixed constant), as $\dim_kL_n=0$ if $m\not\,\mid n$.

An example of a big line bundle where the limit in Theorem \ref{Theorem5} is an irrational number is given in Example 4 of Section 7 \cite{CS}.

The following theorem is proven by Kaveh and Khovanskii \cite{KK} when $k$ is an algebraically closed field (Theorem 3.3 \cite{KK}). We prove the theorem for an arbitrary field.

\begin{Theorem}\label{Theorem100} Suppose that $X$ is a $d$-dimensional proper variety over a field $k$, and $L$ is a graded linear series on $X$ with Kodaira-Iitaka dimension $\kappa=\kappa(L)\ge 0$. Let $m=m(L)$ be the index of $L$.  
Let $Y_{nm}$ be the projective subvariety of $\PP^{\dim_kL_{nm}}$ that is the closure of the image of the rational map 
$L_{nm}:X \dashrightarrow \PP_k^{\dim_kL_{nm}}$. Let $\deg(Y_{nm})$ be the degree of $Y_{nm}$ in $\PP_k^{\dim_kL_{nm}}$.
Then  $\dim Y_{nm}=\kappa$ for $n\gg 0$ and 
$$
\lim_{n\rightarrow \infty}\frac{\dim_k L_{nm}}{n^{\kappa}}=\lim_{n\rightarrow\infty}\frac{\deg(Y_{nm})}{\kappa!n^{\kappa}}.
$$
\end{Theorem}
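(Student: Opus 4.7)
The plan is to prove Theorem \ref{Theorem100} by combining classical Hilbert-polynomial considerations on the projective image $Y_{nm}$ with the semigroup machinery already deployed in the proof of Theorem \ref{Theorem5}. Throughout, write $\operatorname{vol}(L) := \lim_{n\to\infty}\dim_kL_{nm}/n^\kappa$, which exists by Theorem \ref{Theorem5}, and let $R_{(n)}$ denote the graded $k$-subalgebra of $L$ generated by $L_{nm}$, given the grading in which $(R_{(n)})_j$ is the image of $\operatorname{Sym}^jL_{nm}$ in $L_{jnm}$. This $R_{(n)}$ is, by construction, the homogeneous coordinate ring of $Y_{nm}$ in the embedding furnished by $L_{nm}$.

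First I would show $\dim Y_{nm}=\kappa$ for $n\gg 0$. Since $k(Y_{nm})$ is generated over $k$ by ratios of nonzero elements of $L_{nm}$, its transcendence degree equals $\dim Y_{nm}$. Pick $\kappa+1$ homogeneous elements of $L$ of positive degree that are algebraically independent over $k$; by multiplying them up to a common degree divisible by $m$, we may assume they all lie in a single $L_{n_0m}$. For $n\ge n_0$, multiplying by a nonzero element of $L_{(n-n_0)m}$ (which exists for $n-n_0\gg 0$ by definition of $m(L)$) produces $\kappa+1$ algebraically independent elements of $L_{nm}$, giving $\dim Y_{nm}\ge\kappa$; the reverse inequality is immediate from the definition of $\kappa(L)$.

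Next, for the upper bound on $\deg Y_{nm}$, I would use that because $\dim Y_{nm}=\kappa$ its Hilbert polynomial has leading term $\frac{\deg Y_{nm}}{\kappa!}j^\kappa$, so
\[
\frac{\deg Y_{nm}}{\kappa!}=\lim_{j\to\infty}\frac{\dim_k(R_{(n)})_j}{j^\kappa}.
\]
Combined with the containment $(R_{(n)})_j\subset L_{jnm}$, this yields
\[
\frac{\deg Y_{nm}}{\kappa!}\le \lim_{j\to\infty}\frac{\dim_kL_{jnm}}{j^\kappa}=n^\kappa\operatorname{vol}(L),
\]
hence $\limsup_n\deg(Y_{nm})/(\kappa!\,n^\kappa)\le\operatorname{vol}(L)$.

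For the matching lower bound I would invoke a Fujita-style approximation argument carried out at the level of semigroups. Choose a valuation $\nu$ of $k(X)$ with value group $\ZZ^d$ in the lex order and $[V_\nu/m_\nu:k]<\infty$, and form the strongly nonnegative semigroups $S^{(t)}\subset\ZZ^d\times\NN$ ($1\le t\le[V_\nu/m_\nu:k]$) of the method sketched in the introduction, for which $\sum_t \#S^{(t)}_n=\dim_kL_n$ and whose Newton–Okounkov bodies encode $\operatorname{vol}(L)$. Applying Theorem \ref{ConeTheorem4} to each $S^{(t)}$ and summing over $t$, for every $\epsilon>0$ there exists $p_0=p_0(\epsilon)$ so that for $p\ge p_0$,
\[
\lim_{j\to\infty}\frac{\dim_k(R_{(p)})_j}{j^\kappa}\ge p^\kappa\bigl(\operatorname{vol}(L)-\epsilon\bigr),
\]
since the sub-semigroups generated by $S^{(t)}_{pm}$ are exactly the ones computing the growth of $R_{(p)}$ via the same method. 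Using again that the left-hand side equals $\deg Y_{pm}/\kappa!$, we obtain $\deg Y_{pm}/(\kappa!\,p^\kappa)\ge\operatorname{vol}(L)-\epsilon$ for all $p\ge p_0$. Combined with the upper bound this forces $\lim_p\deg Y_{pm}/(\kappa!\,p^\kappa)=\operatorname{vol}(L)$. The main obstacle will be the last step: one must carefully identify the semigroup data of the subring $R_{(p)}$ with the subsemigroups of the $S^{(t)}$ generated in degree $pm$, and verify that the hypotheses of Theorem \ref{ConeTheorem4} really do transfer to each sheet $t$. The presence of multiple sheets, forced on us whenever the residue extension $V_\nu/m_\nu$ over $k$ is nontrivial (as is needed to cover non-perfect ground fields), is what distinguishes this step from Kaveh and Khovanskii's original argument in Theorem 3.3 of \cite{KK}.
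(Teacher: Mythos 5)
Your proposal is essentially the paper's argument: realize $R_{(n)}$ (the paper's $L^{[nm]}$, regraded so that $L_{nm}$ sits in degree 1) as the homogeneous coordinate ring of $Y_{nm}$, get the upper bound $\deg(Y_{nm})/\kappa!\le n^\kappa\lim_j\dim_kL_{jm}/j^\kappa$ from the containment $R_{(n)}\subset L$ together with the Hilbert polynomial, and get the matching lower bound by applying Theorem \ref{ConeTheorem4} to each sheet $S^{(t)}$, then letting $p\to\infty$. Two small points. On the step you flag as the main obstacle, the correct statement is a one-sided inclusion rather than an identification: Lemma \ref{Lemmaproj1} (applied to $L^{[pm]}$) gives $j*S(L)^{(t)}_{pm}\subset S(L^{[pm]})^{(t)}_{jpm}$, hence $\sum_t\#(j*S(L)^{(t)}_{pm})\le\dim_k(R_{(p)})_j$, which is all the lower bound requires; the paper records exactly this as the chain $\sum_t\#(n*S(L)^{(t)}_{mp})\le\sum_t\#(S(L^{[mp]})^{(t)}_{nmp})\le\sum_t\#(S(L)^{(t)}_{nmp})$ and divides by $n^\kappa p^\kappa$. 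Your direct transcendence-degree argument for $\dim Y_{nm}=\kappa$ ($n\gg 0$) is cleaner than the paper's route through $\kappa(L^{[pm]})=\kappa(L)$ and graded Noether normalization, but it needs the easily supplied observation that in the graded domain $L$ multiplying by a fixed nonzero homogeneous $g$ preserves algebraic independence: a putative relation $P(gx_1,\dots,gx_{\kappa+1})=0$ decomposes by degree into $g^eP_e(x_1,\dots,x_{\kappa+1})=0$ for each homogeneous component $P_e$ of $P$, forcing each $P_e=0$.
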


Letting $t$ be an indeterminate, $\deg(Y_{nm})$ is the multiplicity of the  graded $k$-algebra $k[L_{nm}t]$ (with elements of $L_{nm}t$ having degree 1). 

We now proceed to prove Theorems \ref{Theorem5} and \ref{Theorem100}.

By Chow's Lemma, there exists a proper birational morphism $\phi:X'\rightarrow X$ which is an isomorphism over a dense open set, such that $X'$ is projective over $k$. Since $X$ is integral, we have an inclusion $\Gamma(X,\mathcal L^n)\subset \Gamma(X,\phi^*\mathcal L^n)$ for all $n$.
Thus $L$ is a graded linear series for $\phi^*\mathcal L$, on the projective variety $X'$. In this way, we can assume that $X$ is in fact projective over $k$.

By \cite{Z}, $X$ has a closed regular point $Q$ (even though there may be no points which are smooth over $k$ if $k$ is not perfect).
Let  $R=\mathcal O_{X,Q}$. $R$ is a $d$-dimensional regular local ring. Let $k'=k(Q)=R/m_R$.

Choose a regular system of parameters $y_1,\ldots, y_d$ in $R$. By a similar  argument to that of the proof of Theorem \ref{Theorem1}, we may
define a valuation $\nu$ of the function field $k(X)$ of $X$ dominating $R$, by stipulating that
 \begin{equation}\label{eq20}
\nu(y_i)= e_i\mbox{ for $1\le i\le d$}
\end{equation}
where $\{e_i\}$ is the standard basis of the totally ordered  group $\Gamma_{\nu}=(\ZZ^d)_{\rm lex}$, and
 $\nu(c)=0$ if $c$ is a unit in $R$. As  in the proof of Theorem \ref{Theorem1}, we have that the residue field of the valuation ring $V_{\nu}$ of $\nu$ is  $V_{\nu}/m_{\nu}=k(Q)=k'$.

$L$ is a graded linear series for some line bundle $\mathcal L$ on $X$.  Since $X$ is integral, $\mathcal L$ is isomorphic to an invertible sheaf
$\mathcal O_X(D)$ for some Cartier divisor $D$ on $X$. 
We can assume that $Q$ is not contained in the support of $D$, after possibly replacing $D$ with a Cartier divisor linearly equivalent to $D$.
We have an induced graded $k$-algebra isomorphism of section rings 
$$
\bigoplus_{n\ge 0}\Gamma(X,\mathcal L^n)\rightarrow \bigoplus_{n\ge 0}\Gamma(X,\mathcal O_X(nD))
$$
which takes $L$ to a graded linear series for $\mathcal O_X(D)$. Thus we may assume that $\mathcal L=\mathcal O_X(D)$.
 For all $n$, the restriction map followed by inclusion into $V_{\nu}$,
\begin{equation}\label{eqR3}
\Gamma(X,\mathcal L^n)\rightarrow \mathcal L_Q=\mathcal O_{X,Q}\subset V_{\nu}
\end{equation}
 is a 1-1 $k$-vector space homomorphism since $X$ is integral, and we have an induced $k$-algebra homomorphism 
$$
 L\rightarrow \mathcal O_{X,Q}\subset V_{\nu}.
 $$

Given a nonnegative element $\gamma$ in the  value group $\Gamma_{\nu}=(\ZZ^d)_{\rm lex}$ of $\nu$, we have associated valuation ideals $I_{\gamma}$ and $I_{\gamma}^+$ in $V_{\nu}$ defined by 
$$
I_{\gamma}=\{f\in V_{\nu}\mid \nu(f)\ge \gamma\}
$$
and
$$
I_{\gamma}^+=\{f\in V_{\nu}\mid \nu(f)>\gamma\}.
$$
Since $V_{\nu}/m_{\nu}=k'$,  we have  that $I_{\lambda}/I_{\lambda}^+\cong k'$ for all nonnegative elements $\lambda\in \Gamma_{\nu}$, so
\begin{equation}\label{eqR1}
\dim_k(I_{\gamma}/I_{\gamma}^+)= [k':k]< \infty
\end{equation}
for all non negative $\gamma \in \Gamma_{\nu}$.
For $1\le t$, let
$$
S(L)_n^{(t)}=\{\gamma\in \Gamma_{\nu}\mid \dim_k L_n\cap I_{\gamma}/L_n\cap I_{\gamma}^+\ge t\}.
$$
Since every element of $L_n$ has non negative value (as $L_n\subset V_{\nu}$), we have by (\ref{eqR1}) and  (\ref{eqR3}) that 
\begin{equation}\label{eqR2}
\dim_kL_n=\sum_{t=1}^{[k':k]}\#(S(L)_n^{(t)})
\end{equation}
for all $n$.
For $1\le t$, let 
$$
S(L)^{(t)}=\{(\gamma,n)|\gamma \in S(L)_n^{(t)}\}.
$$
We have inclusions of semigroups
$S(L)^{(t')}\subset S(L)^{(t)}$ if $t<t'$.

\begin{Lemma}\label{Lemmaproj1} Suppose that $t\ge 1$, $0\ne f\in L_i$, $0\ne g\in L_j$ and
$$
\dim_kL_i\cap I_{\nu(f)}/L_i\cap I_{\nu(f)}^+\ge t.
$$
Then
\begin{equation}\label{eqproj10}
\dim_k L_{i+j}\cap I_{\nu(fg)}/L_{i+j}\cap I_{\nu(fg)}^+\ge t.
\end{equation}
In particular,
 the $S(L)^{(t)}$  are subsemigroups of the  semigroup $\ZZ^{d+1}$ whenever $S(L)^{(t)}\ne \emptyset$. We have that $m(S(L)^{(t)})=m(S(L)^{(1)})$ and $q(S(L)^{(t)})=q(S(L)^{(1)})$ for all $t$ such that $S(L)^{(t)}\not\subset  \{0\}$.
\end{Lemma}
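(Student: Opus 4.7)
This lemma is the graded-linear-series analog of Lemma \ref{Lemmared1}, and the first assertion (\ref{eqproj10}) can be proved by the identical valuation-theoretic argument. I would choose $f_1,\ldots,f_t\in L_i\cap I_{\nu(f)}$ whose classes form a $k$-basis of a $t$-dimensional subspace of $L_i\cap I_{\nu(f)}/L_i\cap I_{\nu(f)}^+$, and show that the multiples $gf_1,\ldots,gf_t$ have $k$-linearly independent classes in $L_{i+j}\cap I_{\nu(fg)}/L_{i+j}\cap I_{\nu(fg)}^+$. Independence follows from additivity of $\nu$: any relation $\sum a_\ell gf_\ell\in I_{\nu(fg)}^+$ forces $\nu(\sum a_\ell f_\ell)>\nu(f)$, hence $\sum a_\ell f_\ell\in I_{\nu(f)}^+$, so all $a_\ell=0$ by the choice of the $f_\ell$.

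For the semigroup claim, given $(\gamma_1,n_1),(\gamma_2,n_2)\in S(L)^{(t)}$, I would extract representatives $f\in L_{n_1}$ with $\nu(f)=\gamma_1$ and $g\in L_{n_2}$ with $\nu(g)=\gamma_2$ (these exist since the dimensions are $\ge t\ge 1$), and then apply (\ref{eqproj10}) to conclude that $(\gamma_1+\gamma_2,n_1+n_2)\in S(L)^{(t)}$.

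For the equalities $m(S(L)^{(t)})=m(S(L)^{(1)})$ and $q(S(L)^{(t)})=q(S(L)^{(1)})$, my strategy is to prove the stronger statement $G(S(L)^{(t)})=G(S(L)^{(1)})$ as subgroups of $\ZZ^{d+1}$. The inclusion $G(S(L)^{(t)})\subset G(S(L)^{(1)})$ is immediate. For the reverse, the key observation is that every nonzero element of $S(L)^{(t)}$ must have positive last coordinate: since $X$ is an integral proper variety, $L_0$ is a finite field extension of $k$ embedding into $V_\nu$, each nonzero element of $L_0$ has $\nu=0$, and the only possible height-$0$ element of $S(L)^{(t)}$ is $(0,0)$. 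Under the hypothesis $S(L)^{(t)}\not\subset\{0\}$ I may therefore choose $(\gamma_0,n_0)\in S(L)^{(t)}$ with $n_0>0$. For any $(\gamma,n)\in S(L)^{(1)}$ I pick a nonzero $h\in L_n$ with $\nu(h)=\gamma$; applying (\ref{eqproj10}) with a witness of $(\gamma_0,n_0)\in S(L)^{(t)}$ in the role of $f$ and $h$ in the role of $g$ yields $(\gamma+\gamma_0,n+n_0)\in S(L)^{(t)}$, whence $(\gamma,n)=(\gamma+\gamma_0,n+n_0)-(\gamma_0,n_0)\in G(S(L)^{(t)})$. Thus $G(S(L)^{(1)})\subset G(S(L)^{(t)})$, so the two groups coincide; their $\RR$-spans $L(S(L)^{(t)})$ and $L(S(L)^{(1)})$ then coincide (yielding the $q$ equality via $M(S)=L(S)\cap(\RR^d\times\RR_{\ge 0})$), and so do their projections to $\ZZ$ (yielding the $m$ equality).

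The only subtle point is the height-positivity of nonzero elements of $S(L)^{(t)}$, which rests on the brief check that $L_0$ embeds into the residue field of $V_\nu$ so that its nonzero elements are units; everything else is a mechanical transfer of the argument of Lemma \ref{Lemmared1}, using the ``multiply by a fixed element'' trick to promote elements of $S(L)^{(1)}$ to elements of $S(L)^{(t)}$ up to the fixed translate $(\gamma_0,n_0)$.
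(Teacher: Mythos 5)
Your proof is correct, and for the second half it takes a genuinely different (and in fact stronger) route than the paper. The paper never asserts equality of the groups $G(S(L)^{(t)})$ and $G(S(L)^{(1)})$: it proves the two equalities separately, getting $m(S(L)^{(t)})=m(S(L)^{(1)})$ by a divisibility argument (choosing $a\equiv 1$ modulo $m(S(L)^{(t)})$ with $S(L)^{(1)}_{am(S(L)^{(1)})}\ne\emptyset$ and adding a fixed element of $S(L)^{(t)}$), and $q(S(L)^{(t)})=q(S(L)^{(1)})$ by translating a family of elements of $S(L)^{(1)}$ at a common level by a fixed $(\tau,n_2)\in S(L)^{(t)})$ and comparing the dimensions of the cones they generate. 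Your argument packages both facts at once: the same translation trick based on (\ref{eqproj10}) shows $S(L)^{(1)}+(\gamma_0,n_0)\subset S(L)^{(t)}$, hence $G(S(L)^{(1)})\subset G(S(L)^{(t)})$, and with the trivial reverse inclusion the groups coincide, so their projections under $\pi$ and their $\RR$-spans $L(S)$ (hence $M(S)$, $\partial M(S)$ and $q$) coincide. Your observation that the height-zero part of $S(L)^{(t)}$ is only the origin (because $L_0$ sits inside the field $\Gamma(X,\mathcal O_X)$, so its nonzero elements have value $0$) is a worthwhile explicit check: it guarantees both that the hypothesis $S(L)^{(t)}\not\subset\{0\}$ yields an element with positive last coordinate and that the invariants $m$ and $q$ from Section \ref{SecCone} are defined (that section assumes $S\not\subset\ZZ^d\times\{0\}$), a point the paper passes over silently. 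What the paper's route buys is that it works directly with the quantities $m$ and $q$ as they are used later; what yours buys is a single cleaner statement (equality of generated groups) from which both equalities, and indeed more, follow immediately.
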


\begin{proof}
The proof of (\ref{eqproj10}) and that $S(L)^{(t)}$ are sub semigroups is similar to that of Lemma \ref{Lemmared1}.

Suppose that $S(L^{(t)})\not\subset  \{0\}$. $S(L)^{(t)}\subset S(L)^{(1)}$, so 
\begin{equation}\label{eqnr60}
m(S(L)^{(1)})\mbox{ divides }m(S(L)^{(t)})
\end{equation}
and
\begin{equation}\label{eqnr61}
q(S(L)^{(t)})\le q(S(L)^{(1)}).
\end{equation}

For all $a\gg 0$, $S(L)^{(1)}_{am(S(L)^{(1)})}\ne \emptyset$. In particular, we can take  $a\equiv 1\,((\mbox{mod }mS(L)^{(t)}))$. There exists $b>0$ such that
$S(L)^{(t)}_{bm(S(L)^{(t)})}\ne \emptyset$. By (\ref{eqproj10}), we have that 
$$
S(L)^{(t)}_{am(S(L)^{(1)})+bm(S(L)^{(t)})}\ne \emptyset.
$$
Thus $m(S(L)^{(1)})\in \pi(G(S(L)^{(t)}))$, and by (\ref{eqnr60}), $m(S(L)^{(t)})=m(S(L)^{(1)})$.

Let $q=q(S(L)^{(1)})$. There exists $n_1>0$ and $(\gamma_1,n_1),\ldots, (\gamma_q,n_1)\in S(L)^{(1)}_{n_1}$ 
 such that if $\mathcal C_1$ is the cone generated by $(\gamma_1,n_1),\ldots, (\gamma_q,n_1)$ in $\RR^{d+1}$, then 
 $\dim \mathcal C_1\cap(\RR^d\times \{1\})=q$. There exists $(\tau, n_2)\in S(L^{(t)})$ with $n_2>0$. Thus 
 $$
 (\tau+\gamma_1, n_1+n_2),\ldots, (\tau+\gamma_q,n_1+n_2)\in S(L)^{(t)}_{n_1+n_2}
 $$
 by (\ref{eqproj10}). Let $\mathcal C_2$ be the cone generated by 
 $$
 (\tau+\gamma_1, n_1+n_2),\ldots, (\tau+\gamma_q,n_1+n_2)
 $$
 in $\RR^{d+1}$. Then $\dim \mathcal C_2\cap(\RR^d\times\{1\})=q$, and $q\le q(S(L)^{(t)})$. Thus, by (\ref{eqnr61}), $q(S(L)^{(t)})=q(S(L)^{(1)})$.

\end{proof}

We have that $m=m(L)$ is the common value of  $m(S(L)^{(t)})$. Let $q(L)$ be the common value of $q(S(L)^{(t)})$ for  $S(L)^{(t)}\not\subset \{0\}$.

There exists a very ample Cartier divisor $H$ on $X$ (at the beginning of the proof we reduced to $X$ being projective) such that $\mathcal O_X(D)\subset \mathcal O_X(H)$ and the point $Q$ of $X$ (from the beginning of the proof)
is  not contained in the support of $H$. Let $A_n=\Gamma(X,\mathcal O_X(nH))$ and $A$ be the section ring
$A=\bigoplus_{n\ge 0}A_n$. After possibly replacing $H$ with a sufficiently high multiple of $H$, we may assume that $A$ is generated in degree 1 as a $k''=\Gamma(X,\mathcal O_X)$-algebra. $[k'':k]<\infty$ since $X$ is projective. The $k$-algebra homomorphism $L\rightarrow V_{\nu}$ defined after (\ref{eqR3}) extends to a $k$-algebra homomorphism
$L\subset A\rightarrow V_{\nu}$. Let
$$
T_n=\{\gamma\in \Gamma_{\nu}\mid A_n\cap I_{\gamma}/A_n\cap I_{\gamma}^+\ne 0\},
$$
and $T=\{(\gamma,n)\mid \gamma\in T_n\}$. 
$T$ is a subsemigroup of $\ZZ^{d+1}$ by the argument of Lemma \ref{Lemmaproj1}, and we have inclusions of semigroups $S^{(t)}\subset T$ for all $t$.

By our construction, $A$ is naturally a graded subalgebra of the graded algebra $\mathcal O_{X,Q}[t]$. Since $H$ is ample on $X$, we have that $A_{(0)}=k(X)$, where $A_{(0)}$ is the set of elements of degree 0 in the localization of $A$ at the set of nonzero homogeneous elements of $A$. Thus for $1\le i\le d$, there exists $f_i,g_i\in A_{n_i}$, for some $n_i$, such that $\frac{f_i}{g_i}=y_i$. Thus
$$
(e_i,0)=(\nu(y_i),0)=(\nu(f_i),n_i)-(\nu(g_i),n_i)\in G(T).
$$
for $1\le i\le d$. Since $A_1\ne 0$, we then have that $(0,1)\in G(T)$. Thus $G(T)=\ZZ^{d+1}$, so $L(T)=\RR^{d+1}$, $\partial M(T)=\RR^d\times\{0\}$ and $q(T)=d$ (with the notation of  Section \ref{SecCone} on cones and semigroups).
For all $n\gg 0$ we have a bound
$$
|T_n|\le \dim_kA_n=[k'':k]\dim_{k''}A_n= [k'':k]P_A(n)
$$
where $P_A(n)$ is the Hilbert polynomial of the $k''$-algebra $A$, which has degree $\dim X=d=q(T)$. Thus, by Theorem 1.18 \cite{KK}, $T$ is a strongly nonnegative semigroup. Since the $S(L)^{(t)}$ are subsemigroups of $T$, they are also strongly nonnegative,
 so by Theorem \ref{ConeTheorem3} and (\ref{eqR2}), we have that
\begin{equation}\label{eqnr80}
\lim_{n\rightarrow \infty}\frac{\dim_kL_{nm}}{n^{q(L)}}=\sum_{t=1}^{[k':k]}\lim_{n\rightarrow \infty}\frac{\#(S(L)_{nm}^{(t)})}{n^{q(L)}}
=\sum_{t=1}^{[k':k]}\frac{{\rm vol}_{q(L)}(\Delta(S(L)^{(t)}))}{{\rm ind}(S(L)^{(t)})}
\end{equation}
exists.

Let $Y_{pm}$ be the varieties defined in the statement of Theorem \ref{Theorem100}. Let $d(pm)=\dim Y_{pm}$. The coordinate ring of $Y_{pm}$ is the $k$-subalgebra $L^{[pm]}:= k[L_{pm}]$ of $L$ (but with the grading giving elements of $L_{pm}$ degree 1). The Hilbert polynomial $P_{Y_{pm}}(n)$ of $Y_{pm}$ (Section I.7 \cite{H} or Theorem 4.1.3 \cite{BH}) has the properties that
\begin{equation}\label{eqred90}
P_{Y_{pm}}(n)=\frac{\deg(Y_{pm})}{d(pm)!}n^{d(pm)}+\mbox{lower order terms}
\end{equation}
and
\begin{equation}\label{eqred61}
\dim_kL^{[pm]}_{npm}=P_{Y_{pm}}(n)
\end{equation}
for $n\gg 0$. We have that
\begin{equation}\label{eqnr81}
\lim_{n\rightarrow \infty}\frac{\dim_k(L^{[pm]})_{npm}}{n^{d(pm)}}=\frac{\deg(Y_{pm})}{d(pm)!}.
\end{equation}

Suppose that $t$ is such that $1\le t\le [k':k]$ and $S(L)^{(t)}\not\subset \{ 0\}$. By Lemma \ref{Lemmaproj1}, for $p$ sufficiently large, we have that $m(S(L^{[pm]})^{(t)})=mp$. Let $\mathcal C$ be the closed cone generated by $S(L)^{(t)}_{pm}$ in $\RR^{d+1}$. We also have that 
$$
\dim (\mathcal C\cap (\RR^d\times\{1\}))=\dim(\Delta(S(L)^{(t)})=q(L)
$$
 for $p$ sufficiently large (the last equality is by Lemma \ref{Lemmaproj1}). Since
$S(L)^{(t)}_{pm}=S(L^{[pm]})^{(t)}_{pm}$, we have that 
$$
\dim (\mathcal C\cap (\RR^d\times\{1\}))\le \dim(\Delta(S(L^{[pm]})^{(t)})\le \dim (\Delta(L)^{(t)}).
$$
Thus 
\begin{equation}\label{eqred30}
q(S(L^{[pm]})^{(t)})=q(L)
\end{equation}
for all $p$ sufficiently large.

By the definition of Kodaira-Iitaka dimension, we also have that
\begin{equation}\label{eqred31} 
\kappa(L^{[pm]})=\kappa(L)
\end{equation}
for $p$ sufficiently large.

Now by graded Noether normalization (Section I.7 \cite{H} or Theorem 1.5.17\cite{BH}), the finitely generated algebra $k$-algebra $L^{[pm]}$ satisfies
\begin{equation}\label{eqred63}
d(pm)=\dim Y_{pm}=\mbox{Krull dimension}(L^{[pm]})-1=\kappa(L^{[pm]}).
\end{equation}

 We have that
\begin{equation}\label{eqred62}
\frac{1}{[k':k]}\dim_k L^{[pm]}_{npm}\le \#(S(L^{[pm]})^{(1)}_{npm})\le \dim_kL^{[pm]}_{npm}
\end{equation}
for all  $n$. $S(L^{[pm]})^{(1)}$ is strongly nonnegative since $S(L^{[pm]})^{(1)}\subset S(L)^{(1)}$ (or since $L^{[pm]}$ is a finitely generated $k$-algebra).
It follows from Theorem \ref{ConeTheorem3}, (\ref{eqred62}), (\ref{eqred61}), (\ref{eqred90}) and (\ref{eqred63})  that 
\begin{equation}\label{eqred91}
q(S(L^{[pm]})^{(1)})=d(pm)=\kappa(L^{[pm]}).
\end{equation}
From (\ref{eqred30}), (\ref{eqred91}) and (\ref{eqred31}), we have that 
\begin{equation}\label{eqnr85}
q(L)=\kappa(L)=\kappa.
\end{equation}

Theorem \ref{Theorem5} now follows from (\ref{eqnr80}) and (\ref{eqnr85}).
We now prove Theorem \ref{Theorem100}.  For all $p$, we have inequalities
$$
\sum_{t=1}^{[k':k]}\#(n*S(L)^{(t)}_{mp})\le \sum_{t=1}^{[k':k]}\#(S(L^{[mp]})^{(t)}_{nmp})\le \sum_{t=1}^{[k':k]}\#(S(L)_{nmp}^{(t)}).
$$
The second term in the inequality is $\dim_k(L^{[pm]})_{nmp}$ and the third term is $\dim_kL_{nmp}$.
Dividing by $n^{\kappa}p^{\kappa}$, and taking the limit as $n\rightarrow \infty$, we obtain from 
Theorem \ref{ConeTheorem4}, (\ref{eqnr85}) and (\ref{eqnr80}) for the first term and (\ref{eqnr81}), (\ref{eqred31}) and (\ref{eqred63}) for the second term,
 that
for given $\epsilon >0$, we can take $p$ sufficiently large that
$$
\lim_{n\rightarrow\infty}\frac{\dim_kL_{nm}}{n^{\kappa}}-\epsilon\le \frac{\deg(Y_{pm})}{\kappa!p^{\kappa}}\le 
\lim_{n\rightarrow\infty}\frac{\dim_kL_{nm}}{n^{\kappa}}.
$$
Taking the limit as $p$ goes to infinity then proves Theorem \ref{Theorem100}.

\section{Limits on reduced proper schemes over a field}\label{SecRed}

Suppose that $X$ is a proper scheme over a field $k$ and $L$ is a graded linear series for a line bundle $\mathcal L$ on $X$.
Suppose that $Y$ is a closed subscheme of $X$. Set $\mathcal L|Y=\mathcal L\otimes_{\mathcal O_X}\mathcal O_Y$. Taking global sections of the natural surjections
$$
\mathcal L^n\stackrel{\phi_n}{\rightarrow} (\mathcal L|Y)^n\rightarrow 0,
$$
for $n\ge 1$ we have  induced short exact sequences of $k$-vector spaces
\begin{equation}\label{eq54}
0\rightarrow K(L,Y)_n\rightarrow L_n\rightarrow (L|Y)_n\rightarrow 0,
\end{equation}
where 
$$
(L|Y)_n:=\phi_n(L_n)\subset \Gamma(Y,({\mathcal L}|Y)^n)
$$
 and $K(L ,Y)_n$ is the kernel of $\phi_n|L_n$. Defining $K(L,Y)_0=k$ and $(L|Y)_0=\phi_0(L_0)$, we have that
$L|Y=\bigoplus_{n\ge 0}(L|Y)_n$ is a graded linear series for $\mathcal L|Y$ and $K(L,Y)=\bigoplus_{n\ge 0}K(L,Y)_n$ is a graded linear series for $\mathcal L$.

\begin{Lemma}\label{Lemma50a}(Lemma 5.1 \cite{C2})  Suppose that $X$ is a reduced proper scheme over a field $k$ and $X_1,\ldots,X_s$ are the irreducible components of $X$. Suppose that $L$ is a graded linear series on $X$. Then 
$$
\kappa(L)=\max\{\kappa(L|X_i)\mid 1\le i\le s\}.
$$
\end{Lemma}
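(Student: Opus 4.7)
The plan is to prove the two inequalities $\kappa(L) \ge \kappa(L|X_i)$ for each $i$ and $\kappa(L) \le \max_i \kappa(L|X_i)$ separately. The first is routine: by construction the restriction $L \to L|X_i$ is a surjective homomorphism of graded $k$-algebras, so given homogeneous positive-degree elements $\bar y_1,\ldots,\bar y_m \in L|X_i$ that are algebraically independent over $k$, I lift each $\bar y_j$ to a homogeneous $y_j \in L$ of the same degree. Any polynomial relation $P(y_1,\ldots,y_m) = 0$ in $L$ would descend to $P(\bar y_1,\ldots,\bar y_m) = 0$ in $L|X_i$, forcing $P = 0$. Hence $\sigma(L) \ge \sigma(L|X_i)$, and thus $\kappa(L) \ge \kappa(L|X_i)$.

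For the reverse inequality, the crucial input is the injection
$$
L_n \hookrightarrow \bigoplus_{i=1}^{s} (L|X_i)_n
$$
for every $n \ge 0$. This is where reducedness of $X$ enters: at every stalk $\mathcal O_{X,x}$, the intersection of the minimal primes is zero, so $\mathcal O_{X,x}$ embeds into the product of its quotients by those minimal primes, which are precisely the stalks of the $\mathcal O_{X_i}$ at $x$ (for those $X_i$ passing through $x$). This yields an injection of sheaves $\mathcal O_X \hookrightarrow \bigoplus_i (\iota_i)_*\mathcal O_{X_i}$, where $\iota_i\colon X_i \hookrightarrow X$ is the inclusion. Since $\mathcal L$ is invertible, tensoring is exact and the projection formula gives $\mathcal L^n \hookrightarrow \bigoplus_i (\iota_i)_*(\mathcal L^n|X_i)$; applying the left exact functor $\Gamma(X,-)$ and then restricting to $L_n$ (whose image in $\Gamma(X_i,\mathcal L^n|X_i)$ is by definition $(L|X_i)_n$) produces the stated injection.

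Given this, the second inequality follows by contradiction. Suppose $y_1,\ldots,y_m \in L$ are homogeneous of positive degree and algebraically independent, and suppose for each $i$ there exists a nonzero $P_i \in k[x_1,\ldots,x_m]$ with $P_i(y_1|X_i,\ldots,y_m|X_i) = 0$ in $L|X_i$. Then $P := \prod_{i=1}^s P_i$ is nonzero, and every homogeneous component of $P(y_1,\ldots,y_m) \in L$ restricts to zero in every $L|X_i$. Applying the injection graded piece by graded piece forces $P(y_1,\ldots,y_m) = 0$ in $L$, contradicting algebraic independence. Hence for some $i$ the restrictions $y_j|X_i$ are algebraically independent over $k$, and they remain homogeneous of positive degree (in particular each is nonzero, else $x_j$ would be a nontrivial dependence); this gives $\sigma(L|X_i) \ge m$, and therefore $\max_i \sigma(L|X_i) \ge \sigma(L)$.

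The main obstacle is verifying the injection $L_n \hookrightarrow \bigoplus_i (L|X_i)_n$: reducedness is essential, and the argument breaks down on nonreduced schemes, where a nilpotent section of $\mathcal L^n$ can lie in the kernel of every restriction, consistent with the Kodaira-Iitaka pathologies noted earlier in the paper.
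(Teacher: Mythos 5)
Your proof is correct. Note that this paper does not actually reprove the lemma (it is imported as Lemma 5.1 of \cite{C2}), but your argument is the natural one and rests on exactly the same key fact the paper itself invokes without proof in Theorem \ref{Theorem18}, namely that for $X$ reduced the graded restriction map $L\rightarrow \bigoplus_{i=1}^{s} L|X_i$ is injective; your two halves (surjectivity of $L\rightarrow L|X_i$ for $\kappa(L)\ge\kappa(L|X_i)$, and the product-of-relations trick with the injection for the reverse inequality) are both sound.
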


The following theorem is proven in Theorem 5.2 \cite{C2} for reduced projective schemes over a perfect field.

\begin{Theorem}\label{Theorem18} Suppose that $X$ is a reduced proper scheme over a  field $k$. 
Let $L$ be a graded linear series on $X$ with Kodaira-Iitaka dimension  $\kappa=\kappa(L)\ge 0$.  
 Then there exists a positive integer $r$ such that 
$$
\lim_{n\rightarrow \infty}\frac{\dim_k L_{a+nr}}{n^{\kappa}}
$$
exists for any fixed $a\in \NN$.
\end{Theorem}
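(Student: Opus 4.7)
The plan is to induct on the number $s$ of irreducible components of $X$. The base case $s=1$ is Theorem \ref{Theorem5} applied with $r=m(L)$. For $s\geq 2$, fix an irreducible component $X_1$ of $X$ and let $X'\subset X$ be the reduced closed subscheme whose underlying set is $X_2\cup\cdots\cup X_s$; then $X_1$ is a proper variety, $X'$ is a reduced proper $k$-scheme with $s-1$ components, and $X=X_1\cup X'$ as reduced schemes.

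The restriction short exact sequence (\ref{eq54}) with $Y=X'$ gives
$$
\dim_k L_n=\dim_k K(L,X')_n+\dim_k (L|X')_n.
$$
The crucial step is to re-interpret $K(L,X')$ as a graded linear series on the variety $X_1$. Reducedness of $X$ gives $\mathcal{I}_{X_1}\cap \mathcal{I}_{X'}=\bigcap_i \mathcal{I}_{X_i}=0$, so the composite $\mathcal{I}_{X'}\hookrightarrow \mathcal{O}_X\twoheadrightarrow \mathcal{O}_{X_1}$ is injective. Tensoring with the locally free sheaf $\mathcal{L}^n$ preserves injectivity, and taking global sections then produces an injection $K(L,X')_n\hookrightarrow (L|X_1)_n$. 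Since $K(L,X')$ is a graded ideal of $L$, its image $L''$ (augmented by $L''_0=k$ as in (\ref{eq54})) is a graded linear series on the variety $X_1$ satisfying $\dim_k L''_n=\dim_k K(L,X')_n$ for $n\geq 1$.

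Now apply Theorem \ref{Theorem5} to $L''$ on $X_1$ to obtain an integer $r''$ (treat trivially if $\kappa(L'')=-\infty$, in which case $L''_n=0$ for $n\geq 1$ by Lemma \ref{LemmaKI}(4) since $X_1$ is reduced), and apply the inductive hypothesis to $L|X'$ on $X'$ to obtain $r'$ (trivially if $\kappa(L|X')=-\infty$). Set $r=\operatorname{lcm}(r',r'')$. For any fixed $a\in\NN$,
$$
\frac{\dim_k L_{a+nr}}{n^{\kappa}}=\frac{\dim_k L''_{a+nr}}{n^{\kappa}}+\frac{\dim_k (L|X')_{a+nr}}{n^{\kappa}},
$$
and each summand converges as $n\to\infty$: when the relevant Kodaira-Iitaka dimension equals $\kappa$, the limit along $a+nr$ follows from the limit along $a+nr''$ or $a+nr'$ by a trivial rescaling (using that $L''_{a+nr}=0$ unless $m(L'')$ divides $a$, and that subsequences of an arithmetic-sequence limit differ by a factor of $(r/r'')^{\kappa}$ or $(r/r')^{\kappa}$); when it is strictly less than $\kappa$, the corresponding numerator is $O(n^{\text{K-I dim}})=o(n^{\kappa})$ by that same bound. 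Lemma \ref{Lemma50a} gives $\kappa=\max\{\kappa(L|X_1),\kappa(L|X')\}$, so one of the two summands accounts for the leading growth (if any) and the total limit exists.

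The main obstacle, and the step where reducedness of $X$ is essential, is the injection $K(L,X')\hookrightarrow L|X_1$: this is what decouples the otherwise opaque graded ideal $K(L,X')$ from $X'$ and recasts it as a tractable graded linear series on the single variety $X_1$, so that Theorem \ref{Theorem5} can be applied to it. Without this identification the induction has no mechanism to reduce the kernel to the base case, and indeed the conclusion of the theorem is known to fail once nilpotents are allowed (cf.\ Example 6.3 of \cite{C2}). The remainder of the argument is routine bookkeeping with K-I dimensions, arithmetic sequences, and lcms.
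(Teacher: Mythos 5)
Your proof is correct and uses essentially the same idea as the paper: decompose $\dim_k L_n$ into contributions from the irreducible components and apply Theorem~\ref{Theorem5} to each. The paper does this in one shot by building the chain $M^0=L$, $M^j=K(M^{j-1},X_j)$ (so $M^j_n$ is the kernel of $L_n\to\bigoplus_{i\le j}(L|X_i)_n$), which gives $\dim_k L_n=\sum_i\dim_k(M^{i-1}|X_i)_n$ since $M^s=0$ by reducedness; you organize the same decomposition as an induction on $s$, peeling off $X'=X_2\cup\cdots\cup X_s$ and realizing $K(L,X')$ as a graded linear series $L''$ on the variety $X_1$ via the injection $\mathcal I_{X'}\hookrightarrow\mathcal O_{X_1}$ (which is exactly where reducedness enters, as you note). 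Up to reordering components, your $L''$ is the paper's $M^{s-1}|X_s$, so the two proofs are the same argument phrased recursively vs.\ iteratively, with identical bookkeeping of K--I dimensions and indices at the end.
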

The theorem says that 
$$
\lim_{n\rightarrow \infty}\frac{\dim_k L_{n}}{n^{\kappa}}
$$
exists if $n$ is constrained to lie in an arithmetic sequence $a+br$ with $r$ as above, and for some fixed $a$. The conclusions of the theorem are a little weaker than the conclusions of Theorem \ref{Theorem5} for  varieties. In particular, the index $m(L)$ has little relevance on reduced but not irreducible schemes (as shown by the example after Theorem \ref{Theorem8} and Example 5.5 \cite{C2}).

\begin{proof}  Let $X_1,\ldots,X_s$ be the irreducible components of $X$. 
 Define graded linear series $M^i$ on $X$
 by $M^0=L$, $M^i=K(M^{i-1},X_i)$ for $1\le i\le s$. 
 By (\ref{eq54}), for $n\ge 1$, we have exact sequences of $k$-vector spaces 
 $$
 0\rightarrow (M^{j+1})_n=K(M^j,X_{j+1})_n\rightarrow M_n^j\rightarrow (M^j|X_{j+1})_n\rightarrow 0
 $$
 for $0\le j\le s-1$, and thus
 $$
 M_n^j={\rm Kernel}(L_n\rightarrow \bigoplus_{i=1}^j(L|X_i)_n)
 $$
 for $1\le j\le s$. The natural map $L\rightarrow \bigoplus_{i=1}^sL|X_i$ is an injection of $k$-algebras since 
 $X$ is reduced. Thus $M_n^s=(0)$, and
 \begin{equation}\label{eq71}
 \dim_kL_n=\sum_{i=1}^{s}\dim_k (M^{i-1}|X_i)_n
 \end{equation}
 for all $n$.
  Let $r=\mbox{LCM}\{m(M^{i-1}|X_i)\mid  \kappa(M^{i-1}|X_i)=\kappa(L)\}$. 
 The theorem now follows from Theorem \ref{Theorem5} applied to each of the $X_i$ with $\kappa(M^{i-1}|X_i)=\kappa(L)$ (we can start with an $X_1$ with $\kappa(L|X_1)=\kappa(L)$).
\end{proof}

\section{Necessary and sufficient conditions for limits to exist on a proper scheme over a field}\label{SecNec}

The nilradical $\mathcal N_X$ of a scheme $X$ is defined in the section on notations and conventions.

\begin{Lemma}\label{Lemmanr90} Suppose that $X$ is a proper scheme over a field $k$ and $L$ is a graded linear series on $X$. Then $\kappa(L)=\kappa(L|X_{red})$.
\end{Lemma}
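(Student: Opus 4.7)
The restriction map $L \to L|X_{\rm red}$ is a surjective graded $k$-algebra homomorphism by its construction in Section~\ref{SecRed}. The inequality $\kappa(L) \ge \kappa(L|X_{\rm red})$ is essentially formal: if homogeneous elements $\bar y_1,\ldots,\bar y_m \in L|X_{\rm red}$ of positive degree are algebraically independent over $k$, then by surjectivity in each graded piece I can choose homogeneous lifts $y_i \in L_{d_i}$, and any nonzero polynomial relation $P(y_1,\ldots,y_m)=0$ would project to the same relation among the $\bar y_i$. Hence $\sigma(L)\ge\sigma(L|X_{\rm red})$.

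For the reverse inequality, the key observation I would use is that the kernel $K(L,X_{\rm red})$ of the restriction map is a nil ideal of $L$. Indeed, $X$ is Noetherian (being proper over $k$), so the coherent nilradical satisfies $\mathcal N_X^N=0$ for some $N\ge 1$ (take the maximum over a finite affine cover); and because $\mathcal L$ is locally free, the kernel of $\mathcal L^d \to (\mathcal L|X_{\rm red})^d$ equals $\mathcal N_X\cdot\mathcal L^d$. Thus if $f\in K(L,X_{\rm red})_d$, then locally $f$ is a section of $\mathcal N_X\mathcal L^d$, so $f^N$ is locally a section of $\mathcal N_X^N \mathcal L^{Nd}=0$, forcing $f^N=0$ in the section ring.

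Granting this, suppose homogeneous $y_1,\ldots,y_m\in L$ of positive degrees $d_1,\ldots,d_m$ are algebraically independent over $k$, and assume for contradiction that the images $\bar y_i \in L|X_{\rm red}$ satisfy $P(\bar y_1,\ldots,\bar y_m)=0$ for some nonzero $P\in k[T_1,\ldots,T_m]$. Assigning $T_i$ the weight $d_i$ and decomposing $P$ into weighted-homogeneous components, I may assume $P$ itself is weighted-homogeneous of some weighted degree $d$, since the distinct weighted-homogeneous pieces evaluate into distinct graded pieces of $L|X_{\rm red}$ and so vanish separately. Then $f:=P(y_1,\ldots,y_m)\in L_d$ lies in $K(L,X_{\rm red})_d$, so $f^N=0$ by the previous paragraph. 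But $f^N=P^N(y_1,\ldots,y_m)$ with $P^N\ne 0$ in the domain $k[T_1,\ldots,T_m]$, contradicting algebraic independence of the $y_i$. Hence $\sigma(L)\le\sigma(L|X_{\rm red})$, and the equality $\kappa(L)=\kappa(L|X_{\rm red})$ follows.

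The only substantive ingredient is the global nilpotence of $\mathcal N_X$ on the Noetherian scheme $X$; beyond this, all steps are routine manipulations of the transcendence-degree definition of $\kappa$, so I do not expect a real obstacle.
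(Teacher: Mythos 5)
Your proof is correct and takes essentially the same route as the paper: both identify the kernel of $L \rightarrow L|X_{\rm red}$ degreewise with sections of $\mathcal N_X\otimes\mathcal L^n$, show this kernel is a nil ideal, and then transfer algebraic independence in both directions using that a polynomial ring over $k$ has no nonzero nilpotents. The only cosmetic difference is that you certify nilpotence with a uniform exponent from $\mathcal N_X^N=0$ on the Noetherian scheme $X$, while the paper argues nilpotence of each section via stabilization of supports; both work.
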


\begin{proof} Let $\mathcal L$ be a line bundle associated to $X$. We have a commutative diagram
$$
\begin{array}{lllllllll}
&&0&&0&&0&&\\
&&\downarrow&&\downarrow&&\downarrow&&\\
0&\rightarrow & \bigoplus_{n\ge 0}K_n&\rightarrow & \bigoplus_{n\ge 0}L_n&\rightarrow & \bigoplus_{n\ge 0}(L|X_{\rm red})_n&\rightarrow &0\\
&&\downarrow&&\downarrow&&\downarrow&&\\
0&\rightarrow & \bigoplus_{n\ge 0} \Gamma(X,\mathcal L^n\otimes \mathcal N_X)&\rightarrow &\bigoplus_{n\ge 0}\Gamma(X,\mathcal L^n)&\rightarrow &
 \bigoplus_{n\ge 0}\Gamma(X_{\rm red},(\mathcal L|X_{\rm red})^n)
 \end{array}
 $$
 so that $K_n=L_n\cap \Gamma(X,\mathcal L^n\otimes\mathcal N_X)$ for all $n$.
 
 Suppose that $\sigma\in \Gamma(X,\mathcal L^m)$. $X$ is Noetherian, so there exists $r_0=r_0(\sigma)$ such that the closed sets $\mbox{sup}(\sigma^r)=\mbox{sup}(\sigma^{r_0})$ for all $r\ge r_0$. Thus 
 $$
 \begin{array}{l}
 \mbox{$\sigma\in \Gamma(X,\mathcal L^n\otimes\mathcal N_X)$ if and only if}\\
 \mbox{$\sigma_Q$ is torsion in the $\mathcal O_{X,Q}$-algebra $\bigoplus_{n\ge 0}\mathcal L_Q^n$ for all $Q\in X$, if and only if}\\
 \mbox{$\sigma_Q^{r_0}=0$ in $\bigoplus_{n\ge 0}\mathcal L^n_Q$ for all $Q\in X$, if and only if}\\
 \mbox{$\sigma^{r_0}=0$ in $\bigoplus_{n\ge 0}\Gamma(X,\mathcal L^n)$ since $\mathcal L$ is a sheaf.}
 \end{array}
 $$
 Thus $\bigoplus_{n\ge 0}\Gamma(X,\mathcal L^n\otimes \mathcal N_X)$ is the nilradical of $\bigoplus_{n\ge 0}\Gamma(X,\mathcal L^n)$ and 
 so $K$ is the nilradical of $L$.
 
 We have that $\kappa(L|X_{\rm red})\le \kappa(L)$ since any injection of a weighted polynomial ring into $L|X_{\rm red}$ lifts to a graded injection into $L$.
 
 If $A$ is a weighted polynomial ring which injects into $L$, then it intersects $K$ in $(0)$, so there is an induced graded inclusion of $A$ into $L|X_{\rm red}$. Thus $\kappa(L|X_{\rm red})=\kappa(L)$.

\end{proof}

\begin{Theorem}\label{Theorem8} 
Suppose that $X$ is a  proper scheme over a field $k$. Let $\mathcal N_X$ be the nilradical of $X$. Suppose that $L$ is a graded linear series on $X$. Then
\begin{enumerate}
\item[1)]  There exists a positive constant $\gamma$ such that $\dim_kL_n<\gamma n^e$ where 
$$
e=\max\{\kappa(L),\dim \mathcal N_X\}.
$$
\item[2)] Suppose that $\dim \mathcal N_X<\kappa(L)$. Then there exists a positive integer $r$ such that 
$$
\lim_{n\rightarrow \infty}\frac{\dim_kL_{a+nr}}{n^{\kappa(L)}}
$$
exists for any fixed $a\in\NN$. 
\end{enumerate}
\end{Theorem}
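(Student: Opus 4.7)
The plan is to decompose $L$ via the canonical exact sequence relating $X$ to its reduction $X_{\rm red}$, and then treat the two resulting pieces separately: the nilradical contribution by a coherent-sheaf growth bound, and the reduced contribution by Theorem \ref{Theorem18}. By the exact sequence (\ref{eq54}) applied to $Y=X_{\rm red}$, we have
$$
0\rightarrow K_n\rightarrow L_n\rightarrow (L|X_{\rm red})_n\rightarrow 0
$$
for every $n$, where $K=K(L,X_{\rm red})$. As noted in the proof of Lemma \ref{Lemmanr90}, $K_n\subseteq \Gamma(X,\mathcal L^n\otimes\mathcal N_X)$, and by Lemma \ref{Lemmanr90} we have $\kappa(L|X_{\rm red})=\kappa(L)$.

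For Part 1, I first bound $\dim_k K_n$. The argument for (\ref{eqF}) in the proof of Lemma \ref{LemmaKI}(2) actually yields, for any coherent sheaf $\mathcal M$ on $X$, a constant $\gamma'$ with $\dim_k\Gamma(X,\mathcal M\otimes\mathcal L^n)<\gamma' n^{\dim\mathcal M}$: the filtration of $\mathcal M$ produced via Proposition 7.4 of \cite{H} (combined with Chow's lemma as in that proof) has successive quotients supported on closed integral subschemes $Y_i$ of dimension at most $\dim\mathcal M$, and each contributes a polynomial of degree $\dim Y_i$. Applying this to $\mathcal M=\mathcal N_X$ gives $\dim_k K_n<\gamma_1 n^{\dim\mathcal N_X}$. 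For the reduced piece, if $\kappa(L)=-\infty$, then $(L|X_{\rm red})_n=0$ for $n>0$ by Lemma \ref{LemmaKI}(4). Otherwise $\kappa(L)\ge 0$, and Theorem \ref{Theorem18} applied on $X_{\rm red}$ produces an $r>0$ such that $\lim_n \dim_k(L|X_{\rm red})_{a+nr}/n^{\kappa(L)}$ exists, and is therefore finite, for each $a\in\{0,1,\ldots,r-1\}$; splitting the indices into these finitely many progressions gives a uniform bound $\dim_k(L|X_{\rm red})_n<\gamma_2 n^{\kappa(L)}$ valid for all $n\ge 1$. Adding the two bounds yields $\dim_k L_n<\gamma n^e$ with $e=\max\{\kappa(L),\dim\mathcal N_X\}$.

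For Part 2, the hypothesis $\dim\mathcal N_X<\kappa(L)$ combined with the nilradical bound gives $\dim_k K_n<\gamma_1 n^{\dim\mathcal N_X}=o(n^{\kappa(L)})$. Applying Theorem \ref{Theorem18} directly to $L|X_{\rm red}$ on the reduced proper $k$-scheme $X_{\rm red}$ (whose Kodaira-Iitaka dimension equals $\kappa(L)$), there exists $r>0$ such that
$$
\lim_{n\rightarrow\infty}\frac{\dim_k(L|X_{\rm red})_{a+nr}}{n^{\kappa(L)}}
$$
exists for every $a\in\NN$. Since $\dim_k L_n=\dim_k K_n+\dim_k(L|X_{\rm red})_n$ and the first term, divided by $n^{\kappa(L)}$, tends to zero, the limit
$$
\lim_{n\rightarrow\infty}\frac{\dim_k L_{a+nr}}{n^{\kappa(L)}}
$$
exists for every $a\in\NN$, completing Part 2.

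The only technical point requiring care is the coherent-sheaf bound with the sharp exponent $\dim\mathcal N_X$ rather than $\dim X$; this is not an obstacle so much as a careful rereading of the proof of Lemma \ref{LemmaKI}(2), tracking that the Hilbert polynomials arising in the filtration have degrees controlled by $\dim\mathcal M$. Once that refinement is noted, both parts are immediate consequences of the decomposition together with Theorem \ref{Theorem18}.
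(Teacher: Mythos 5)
Your proposal is correct and follows essentially the same route as the paper: decompose $L_n$ via $0\rightarrow K_n\rightarrow L_n\rightarrow (L|X_{\rm red})_n\rightarrow 0$, bound $K_n\subset\Gamma(X,\mathcal N_X\otimes\mathcal L^n)$ by a constant times $n^{\dim\mathcal N_X}$, and handle the reduced piece with Lemma \ref{Lemmanr90} and Theorem \ref{Theorem18}. Your explicit remark that the coherent-sheaf bound (\ref{eqF}) refines to exponent $\dim\mathcal M$ is exactly the point the paper uses (tersely) when asserting $\dim_k\Gamma(X,\mathcal N_X\otimes\mathcal L^n)<cn^{\dim\mathcal N_X}$, so nothing is missing.
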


\begin{proof} Let $\mathcal L$ be a line bundle associated to $L$, so that $L_n\subset \Gamma(X,\mathcal L^n)$ for all $n$.
Let $K_n$ be the kernel of the surjection $L_n\rightarrow (L|X_{\rm red})_n$.
From the exact sequence
$$
0\rightarrow \mathcal N_X\rightarrow \mathcal O_X\rightarrow \mathcal O_{X_{\rm red}}\rightarrow 0,
$$ 
we see that $K_n\subset \Gamma(X,\mathcal N_X\otimes\mathcal L^n)$ for all $n$. There exists a constant $c$ such that
$$
\dim_k\Gamma(X,\mathcal N_X\otimes\mathcal L^n)<cn^{\dim \mathcal N_X}
$$
for all $n$. By Lemma \ref{Lemmanr90} and Theorem \ref{Theorem18}, 1. holds and there exists a positive integer $r$ such that for any $a$,
$$
\lim_{n\rightarrow \infty}\frac{\dim_k(L|X_{\rm red})_{a+nr}}{n^{\kappa(L)}}
$$
exists. Thus the conclusions of the theorem hold.
\end{proof}

An  example showing that the $r$ of the theorem might have to be strictly larger than the index $m(L)$ is obtained as follows. Let $X_1$ and $X_2$ be two general linear subspaces of dimension $d$  in $\PP^{2d}$. 
They intersect transversally in a rational point $Q$.  Let 
Let $L^i$ be the graded linear series on $X_i$ defined by 
$$
L^1_n=\left\{\begin{array}{ll}
\Gamma(X_1,\mathcal O_{X_1}(n)\otimes \mathcal O_{X_1}(-Q))&\mbox{ if } 2\mid n\\
0&\mbox{ otherwise}\end{array}\right.
$$
and
$$
L^2_n=\left\{\begin{array}{ll}
\Gamma(X_2,\mathcal O_{X_2}(n)\otimes \mathcal O_{X_2}(-Q))&\mbox{ if } 3\mid n\\
0&\mbox{ otherwise}\end{array}\right.
$$
Here $\mathcal O_{X_i}(-Q)$ denotes the ideal sheaf on $X_i$ of the point $Q$. Let $X$ be the reduced scheme whose support is $X_1\cup X_2$.
From the short exact sequence
$$
0\rightarrow \mathcal O_X\rightarrow \mathcal O_{X_1}\bigoplus \mathcal O_{X_1}\rightarrow k(Q)\rightarrow 0,
$$
 we see that there is a graded linear series $L$ on $X$ associated to $\mathcal O_X(1)$ such that
 $L|X_i=L^i$ for $i=1,2$, and $\dim_kL_n= \dim_kL^1_n+\dim_kL^2_n$ for all $n$. Thus
 $$
 \dim_k L_n=\left\{\begin{array}{cl}
 2\binom{d+n}{d}-2&\mbox{ if }n\equiv 0\, (\mbox{mod } 6)\\
 0&\mbox{ if }n\equiv 1\mbox{ or }5\,(\mbox{mod }6)\\
 \binom{d+n}{d}-1&\mbox{ if }n\equiv 2,3\mbox{ or }4\,(\mbox{mod }6).\\
 \end{array}\right.
 $$

In Section 6 of \cite{C2}, we give examples of graded linear series on nonreduced projective schemes, such that the limit
$$
\lim_{n\rightarrow \infty}\frac{\dim_k L_n}{n^r}
$$
does not exist (for suitable $r$), even when $n$ is constrained to lie in any arithmetic sequence. Some of the examples are section rings of line bundles.

In Theorem \ref{TheoremN1}, we give general conditions under which limits do not always exist.

\begin{Theorem}\label{TheoremN1} Suppose that $X$ is a $d$-dimensional projective scheme over a field $k$ with $d>0$.  Let $r=\dim \mathcal N_X$, where $\mathcal N_X$ is the nilradical of $X$. Suppose that $r\ge 0$. Let $s\in \{-\infty\}\cup \NN$ be such that $s\le r$. Then there exists a graded linear series $L$ on $X$ with $\kappa(L)=s$ such that 
$$
\lim_{n\rightarrow\infty} \frac{\dim_k L_n}{n^r}
$$
does not exist, even when $n$ is constrained to lie in any arithmetic sequence.
\end{Theorem}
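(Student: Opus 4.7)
The plan is to construct $L$ of the form $L_n = V_n + \eta M_n$, where $\eta$ is a square-zero section in the nilradical, $V$ is a polynomial subalgebra providing $\kappa(L) = s$, and $M$ is a vanishing-order-modulated family of sections producing $n^r$-scale oscillation. The approach is the graded-linear-series analog of the Dao--Smirnov construction in Theorem \ref{Theorem3}, with $\eta^2 = 0$ playing the role of the square-zero element $x$ there.

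Fix a very ample $\mathcal O_X(1)$ and write $R = \bigoplus_n \Gamma(X, \mathcal O_X(n))$. Since $\mathcal N_X$ is a nonzero coherent sheaf of dimension $r$, the twist $\mathcal N_X \otimes \mathcal O_X(a)$ is globally generated for $a \gg 0$, and we may pick $\eta \in \Gamma(X, \mathcal N_X \otimes \mathcal O_X(a)) \subset \Gamma(X, \mathcal O_X(a))$ nonvanishing at the generic point of some $r$-dimensional irreducible component $W$ of $\mathrm{supp}(\mathcal N_X)$. Being a section of the nilradical on a Noetherian scheme, $\eta$ is nilpotent in $R$; replacing $\eta$ by its largest nonvanishing power gives $\eta^2 = 0$ while preserving generic nonvanishing on an $r$-dimensional component, still called $W$. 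Fix a nonzero effective Cartier divisor $D$ on $W$ with $\mathcal O_W(1) - D$ still big on $W$ (possible by taking $D$ small relative to the ample class $\mathcal O_W(1)$). For $s = -\infty$ set $V = k$; for $s \ge 0$, apply Lemma \ref{LemmaKI} to the big class $\mathcal O_W(1) - D$ on the $r$-dimensional $W$ (using $r \ge s$) to produce $s+1$ algebraically independent homogeneous sections in $\bigoplus \Gamma(W, \mathcal O_W(n) \otimes \mathcal O_W(-nD))$, lift them to sections $t_0, \ldots, t_s \in R$ (possible after twisting by a sufficiently high power of $\mathcal O_X(1)$ via Serre vanishing), and set $V = k[t_0, \ldots, t_s]$.

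Construct a nondecreasing $1$-Lipschitz function $\sigma: \mathbb N \to \mathbb N$ such that $\sigma(n)/n$ has distinct $\liminf$ and $\limsup$ along every arithmetic progression: take $\sigma$ constant on ``flat'' intervals and incrementing by $1$ per step on ``growth'' intervals, with the interval lengths chosen at super-exponentially growing scales (this is a $1$-Lipschitz variant of (\ref{eqsigma}); the non-convergence along any arithmetic progression follows by the arguments of Lemmas 6.1--6.2 of \cite{C2}). Define $M_n \subset \Gamma(X, \mathcal O_X(n-a))$ as the preimage under restriction to $W$ of $\Gamma(W, \mathcal O_W(n-a) \otimes \mathcal O_W(-\sigma(n) D))$, and set $L_n = V_n + \eta M_n$ with $L_0 = k$. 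Verify $L$ is a graded $k$-subalgebra of $R$: $V_n V_m \subset V_{n+m}$; $\eta M_n \cdot \eta M_m = \eta^2 M_n M_m = 0$; and $V_n \cdot \eta M_m = \eta V_n M_m$, whose restriction to $W$ lies in $\Gamma(W, \mathcal O_W(n+m-a) \otimes \mathcal O_W(-(n+\sigma(m))D))$ since $V_n|_W \subset \Gamma(W, \mathcal O_W(n) \otimes \mathcal O_W(-nD))$, and this is contained in $\Gamma(W, \mathcal O_W(n+m-a) \otimes \mathcal O_W(-\sigma(n+m)D))$ by the $1$-Lipschitz property $n + \sigma(m) \ge \sigma(n+m)$; hence $V_n \cdot \eta M_m \subset \eta M_{n+m}$.

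By Lemma \ref{Lemmanr90}, $\kappa(L) = \kappa(L|X_{\mathrm{red}}) = \kappa(V|X_{\mathrm{red}}) = s$, since $\eta|_{X_{\mathrm{red}}} = 0$ kills the $\eta M$ contribution while $V$ injects into $L|X_{\mathrm{red}}$. The sum $V_n + \eta M_n$ is direct ($V_n$, as a graded piece of a polynomial ring, contains no nilpotents, whereas $\eta M_n$ is entirely nilpotent), so $\dim_k L_n = \dim_k V_n + \dim_k \eta M_n$, with $\dim_k V_n = O(n^s)$ and $\dim_k \eta M_n = \dim_k \Gamma(W, \mathcal O_W(n-a) \otimes \mathcal O_W(-\sigma(n)D)) \sim n^r \cdot \mathrm{vol}_W(\mathcal O_W(1) - (\sigma(n)/n) D)/r!$ by asymptotic Riemann--Roch. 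Since the volume function $\tau \mapsto \mathrm{vol}_W(\mathcal O_W(1) - \tau D)$ is continuous and strictly decreasing on its bigness range, the oscillation of $\sigma(n)/n$ along every arithmetic progression produces oscillation of $\dim_k L_n/n^r$ along every arithmetic progression. The main obstacle is the simultaneous construction of $\sigma$ that is $1$-Lipschitz (needed for the algebra structure $V_n \cdot \eta M_m \subset \eta M_{n+m}$) and has $\sigma(n)/n$ oscillating along every arithmetic progression --- this is the graded-linear-series analog of the monotonicity inequality $m + f(n) \ge f(m+n)$ in the proof of Theorem \ref{Theorem3}, and is what makes the construction work uniformly across all $s \le r$.
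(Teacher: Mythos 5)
Your overall strategy --- a square-zero nilpotent section multiplying a $\sigma$-modulated family, plus a small polynomial algebra to pin $\kappa(L)=s$ --- is the same as the paper's, but two steps fail as written. First, the reduction to $\eta^2=0$: replacing $\eta\in\Gamma(X,\mathcal N_X\otimes\mathcal O_X(a))$ by its largest nonvanishing power does \emph{not} preserve generic nonvanishing on an $r$-dimensional component, because powers of a nilpotent section can lose support dimension before they vanish (already locally: in $k[x,y]/(x^3,x^2y)$ the nilradical $(x)$ has one-dimensional support, while $x^2$ is supported only at the origin). The paper avoids this by taking a homogeneous $x$ in the coordinate ring with $\mathrm{ann}_S(x)=P_Y$, where $P_Y$ is a minimal (hence associated) prime of the nilradical of maximal dimension $r$; then $x\in P_Y$ gives $x^2=0$ for free and pins the annihilator exactly. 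Second, and relatedly, your central identity $\dim_k\eta M_n=\dim_k\Gamma(W,\mathcal O_W(n-a)\otimes\mathcal O_W(-\sigma(n)D))$ is unjustified: it needs $\mathcal I_W\cdot\eta=0$, so that multiplication by $\eta$ kills exactly the sections vanishing on $W$. Generic nonvanishing plus $\eta^2=0$ only gives the inclusion of the annihilator of $\eta$ into $\mathcal I_W$, hence a lower bound; $\dim_k\eta M_n$ exceeds your claimed value by $h^0$ of a twist of $\mathcal I_W\eta$, and you would have to argue separately that this surplus is independent of $\sigma(n)$ and converges after division by $n^r$ before the oscillation of $\sigma(n)/n$ transfers to $\dim_kL_n/n^r$ (you also tacitly use surjectivity of $\Gamma(X,\mathcal O_X(n-a))\rightarrow\Gamma(W,\mathcal O_W(n-a))$ for $n\gg 0$). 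With the paper's choice $\mathrm{ann}_S(x)=P_Y$ the count is exact and no such repair is needed.

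In addition, the case $r=0$, which the theorem allows, is not covered by your construction: there is no nonzero effective Cartier divisor $D$ on a zero-dimensional $W$ with $\mathcal O_W(1)-D$ big, and the volume-continuity mechanism cannot produce oscillation at scale $n^0$, which requires $\dim_kL_n$ itself to jump infinitely often; the paper handles $r=0$ (both $s=0$ and $s=-\infty$) by a separate construction built on the $\{0,1\}$-valued function $\tau$ of (\ref{eqtau}). Two smaller points: when $s=r$ the term $\dim_kV_n/n^r$ is only quasi-polynomial, so you must rule out interference with the $\sigma$-oscillation along a fixed arithmetic progression (the paper sidesteps this by arranging honest polynomials $Q_s$ and $Q_r$), and the existence of a nondecreasing $1$-Lipschitz $\sigma$ with $\sigma(n)/n$ non-convergent along every arithmetic progression is asserted but not proved --- it is true, but needs the super-exponential interval argument spelled out; note that the paper's $\sigma$ of (\ref{eqsigma}) is not Lipschitz, and the paper does not need it to be, since its cross terms are absorbed by multiplying with powers of $z_0$.
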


\begin{Remark} The sequence 
$$
\frac{\dim_kL_n}{n^r}
$$
in Theorem \ref{TheoremN1} must be bounded by Theorem \ref{Theorem8}.
\end{Remark}

\begin{proof} Let $Y$ be an irreducible component of the support of $\mathcal N_X$ which has maximal dimension $r$. Let $S$ be a homogeneous coordinate ring of $X$, which we may assume is saturated, so that the natural graded homomorphism $S\rightarrow \bigoplus_{n\ge 0}\Gamma(X,\mathcal O_X(n))$ is an inclusion. Let $P_Y$ be the homogeneous prime ideal of $Y$ in $S$. There exist homogeneous elements $z_0,\ldots,z_r\in S$ such that if $\overline z_i$ is the image of $z_i$ in $S/P_Y$, then $\overline z_1,\ldots, \overline z_r$ is a homogeneous system of parameters in $S/P_Y$
(by Lemma 1.5.10 and Proposition 1.5.11 \cite{BH}). We can assume that $\deg z_0=1$ since some linear form in $S$ is not in $P_Y$, so it is not a zero divisor $S/P_Y$. We can take $z_0$ to be this form
(If $k$ is infinite, we can take all of the $z_i$ to have degree 1). 
$k[\overline z_0,\ldots,\overline z_r]$ is a weighted polynomial ring (by Theorem 1.5.17 \cite{BH}), so $A:= k[z_0,\ldots,z_r]\cong k[\overline z_0,\ldots,\overline z_d]$ is  a weighted polynomial ring. Let $N$ be the nilradical of $S$. The sheafification of $N$ is $\mathcal N_X$. $P_Y$ is a minimal prime of $N$, so there exists a homogeneous element $x\in N$ such that $\mbox{ann}_S(x)=P_Y$. $N_{P_Y}\ne 0$ in $S_{P_Y}$, so $(P_Y)_{P_Y}\ne 0$. Thus $x\in P_Y$, since otherwise $0=(xP_Y)_{P_Y}=(P_Y)_{P_Y}$. Consider the graded
$k$-subalgebra $B:=A[x]=k[z_0,\ldots,z_r,x]$ of $S$. We have that $x^2=0$. Also, $\mbox{ann}_A(x)=\mbox{ann}_S(x)\cap A=P_Y\cap A=(0)$. Suppose that $ax+b=0$ with $a,b\in A$. Then $xb=0$, whence $b=0$ and thus $a=0$ also. Hence the only relation on $B$ is $x^2=0$. Let $d_i=\deg z_i$, $e=\deg x$. Recall that $d_0=1$. Let $f=\mbox{LCM}\{d_0,\ldots,d_r,e\}$.

First assume  that $r\ge 1$. For $0\le\alpha\le r$, let $M_t^{(\alpha)}$ be the $k$-vector space of homogeneous forms of degree $t$ in the weighted variables $z_0,\ldots,z_{\alpha}$. $\mathcal O_{Z_{\alpha}}(f)$ is an ample line bundle on the weighted projective space $Z_{\alpha}=\mbox{Proj}(k[z_0,\ldots,z_{\alpha}])$ (If $U_i=\mbox{Spec}(k[z_0,\ldots,z_{\alpha}]_{(z_i)})$, then $\mathcal O_{Z_{\alpha}}|U_i=z_i^{\frac{f}{d_i}}\mathcal O_{U_i}$).
$$
\dim_kM_{nf}^{(\alpha)}=\dim_k\Gamma(Z_{\alpha},\mathcal O_{Z_{\alpha}}(nf))
$$
is thus the value of a polynomial $Q_{\alpha}(n)$ in $n$ of degree $\alpha$ for $n\gg0$. Write
$$
Q_{\alpha}(n)=c_{\alpha}n^{\alpha}+\mbox{ lower order terms}.
$$

Suppose that $s\ge 0$ (and $r\ge 1$). 
Let $L_0=k$. For $n\ge 1$, let 
$$
L_n=z_0^{nf}M_{nf}^s+xz_0^{(n-\sigma(n))f-e}M^r_{(n+\sigma(n))f}\subset  B_{2nf}\subset S_{2nf}\subset \Gamma(X,\mathcal O_X(2nf))
$$
where $\sigma(n)$ is the function of (\ref{eqsigma}).
$L_mL_n\subset L_{m+n}$ since $\sigma(j)\ge \sigma(i)$ if $j\ge i$. $L=\bigoplus_{n\ge 0}L_n$ is a graded linear series with $\kappa(L)=s$. Since $B$ has $x^2=0$ as its only relation,  we have that
$$
\begin{array}{lll}
\dim_kL_n&=& \dim_k M_{nf}^s+\dim_k M^r_{(n+\sigma(n))f}\\
&=& Q_s(n)+Q_r(n+\sigma(n)),
\end{array}
$$
so that
$$
\lim_{n\rightarrow \infty}\frac{\dim_k L_n}{n^r}= \lim_{n\rightarrow \infty} \left(c_sn^{s-r}+c_r(1+(\frac{\sigma(n)}{n}))^r\right)
$$
which does not exist, even when $n$ is constrained to lie in any arithmetic sequence, since $\lim_{n\rightarrow \infty}\frac{\sigma(n)}{n}$ has this property (as commented after (\ref{eqnr1})).

Suppose that $s=-\infty$ (and $r\ge 1$). Then define the graded linear series $L$ by 
$$
L_n=xz_0^{(n-\sigma(n))f-e}M^r_{(n+\sigma(n))f}.
$$
Then $\kappa(L)=-\infty$. We compute as above that
$$
\lim_{n\rightarrow\infty} \frac{\dim_kL_n}{n^r}=\lim_{n\rightarrow \infty} c_r(1+(\frac{\sigma(n)}{n}))^r
$$
 does not exist, even when $n$ is constrained to lie in any arithmetic sequence.

Now assume that $r=s=0$. Since $\dim \mathcal N_X=0$, we have injections for all $n$,
$$
\Gamma(X,\mathcal N_X)\cong \Gamma(X,\mathcal N_X\otimes \mathcal O_X(n))\rightarrow \Gamma(X,\mathcal O_X(n)).
$$ 
 In this case $Y$ is a closed point, so that $\dim_k\Gamma(X,\mathcal I_Y\otimes\mathcal O_X(en))$ goes to infinity as $n\rightarrow\infty$ (we assume that $d=\dim X>0$). Thus for $g\gg 0$, there exists $h\in \Gamma(X,\mathcal I_Y\otimes\mathcal O_X(eg))$ such that $h\not\in \Gamma(X,\mathcal N_X\otimes\mathcal O_X(eg))$, so $h$ is not nilpotent in $S$. $h\in P_Y$ implies $hx=0$ in $S$.  Define $L_0=k$ and for $n>0$,
 $$
 L_n =\left\{\begin{array}{ll}
 kh^n&\mbox{ if }\tau(n)=0\\
 kh^n+kxz_0^{ng-e}&\mbox{ if }\tau(n)=1,
 \end{array}\right.
 $$
 where $\tau(n)$ is the function of (\ref{eqtau}). $\tau(n)$ has the property that $\tau(n)$ is not eventually constant, even when $n$ is constrained to line in an arithmetic sequence.

$L=\bigoplus_{n\ge 0}L_n$ is a graded linear series on $X$ with $\kappa(L)=0$ such that 
$\lim_{n\rightarrow\infty}\dim_kL_n$ does not exist, even when $n$ is constrained to lie in any arithmetic sequence.

The last case is when $r=0$ and $s=-\infty$. Define $L_0=k$ and for $n>0$,
$$
L_n=\left\{\begin{array}{ll}
0&\mbox{ if }\tau(n)=0\\
kxz_0^{ng-e}&\mbox{ if }\tau(n)=1.
\end{array}
\right.
$$
Then $L=\bigoplus_{n\ge 0}L_n$ is a graded linear series on $X$ with $\kappa(L)=-\infty$ such that 
$\lim_{n\rightarrow\infty}\dim_kL_n$ does not exist, even when $n$ is constrained to lie in any arithmetic sequence.

\end{proof}

\begin{Theorem}\label{TheoremN20} Suppose that $X$ is a projective nonreduced scheme over a field $k$. Suppose that $s\in \NN\cup\{-\infty\}$ satisfies
$s\le\dim \mathcal N_X$. Then there exists a graded linear series $L$ on $X$ with $\kappa(L)=s$ and a constant $\alpha>0$ such that 
$$
\alpha n^{\dim \mathcal N_X}<\dim_k L_{nm}
$$
for all $n\gg 0$.
\end{Theorem}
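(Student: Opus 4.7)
The approach is to reuse the combinatorial construction from the proof of Theorem \ref{TheoremN1}, replacing the oscillating shift $\sigma(n)$ used there (to defeat the existence of the limit) by a constant shift, so that the nilpotent summand contributes at the full rate $n^r$ with $r=\dim\mathcal N_X$. Carrying over the notation of that earlier proof, I will fix an irreducible component $Y$ of the support of $\mathcal N_X$ of maximal dimension $r$, a saturated homogeneous coordinate ring $S$ of $X$ with homogeneous prime $P_Y$, homogeneous elements $z_0,\dots,z_r\in S$ (with $\deg z_0=1$) whose images form a homogeneous system of parameters in $S/P_Y$, and a homogeneous nilpotent $x\in N\cap P_Y$ with $\mathrm{ann}_S(x)=P_Y$ (hence $x^2=0$). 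Write $A=k[z_0,\dots,z_r]$, which injects into $S$ as a weighted polynomial ring, $e=\deg x$, $f=\mathrm{lcm}(d_0,\dots,d_r,e)$, and $M^{(\alpha)}_t$ for the $t$-th graded piece of $k[z_0,\dots,z_\alpha]$; by the argument in Theorem \ref{TheoremN1}, $\dim_k M^{(\alpha)}_{nf}=Q_\alpha(n)$ is a polynomial in $n$ of degree $\alpha$ with positive leading coefficient.

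For the construction, assume first $r\ge 1$ and $0\le s\le r$, set $L_0=k$ and for $n$ large enough put
\[
L_n \;=\; z_0^{nf}\,M^{(s)}_{nf} \;+\; x\,z_0^{nf-e}\,M^{(r)}_{nf}\;\subset\;S_{2nf}
\]
(for the few small $n$ with $nf<e$, drop the second summand). Then $L$ is a graded linear series for $\mathcal O_X(2f)$, with $L_n\ne 0$ for all $n\ge 0$, so the ``$m$'' of the statement is $m=1$. If $s=-\infty$, keep only the second summand, and if $r=0$ the same formulas apply: the required lower bound $\alpha n^0=\alpha<\dim_k L_n$ only needs each $L_n$ to contain a nonzero element. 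Multiplicativity $L_iL_j\subset L_{i+j}$ follows from $x^2=0$ together with the evident closure of the $M^{(\alpha)}_\cdot$ under products. Since the $\overline z_i$ are algebraically independent in $S/P_Y$, one has $A\cap P_Y=(0)$, so multiplication by $x\,z_0^{nf-e}$ is injective on $M^{(r)}_{nf}$, giving the lower bound $\dim_k L_n\ge Q_r(n)>\alpha n^r$ for $n\gg 0$.

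The Kodaira-Iitaka computation has two directions. The inequality $\kappa(L)\ge s$ is witnessed by the algebraically independent homogeneous elements $z_0^{2f},\,z_0^f z_1^{f/d_1},\dots,z_0^f z_s^{f/d_s}\in L_1$, exactly as in the proof of Theorem \ref{TheoremN1}. For the reverse inequality I use the quotient map $B:=A+xA\to B/xA=A$, under which the image of $L$ lies entirely in the subring $k[z_0,\dots,z_s]$; if $y_1,\dots,y_m\in L$ are algebraically independent homogeneous of positive degree, a relation $F(\bar y_1,\dots,\bar y_m)=0$ in $A$ would force $F(y_1,\dots,y_m)\in xA$ and hence $F(y_1,\dots,y_m)^2=0$, so $F=0$, giving $m\le s+1$ and $\kappa(L)\le s$. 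When $s=-\infty$, every positive-degree homogeneous element of $L$ is a multiple of $x$, hence nilpotent, and $\kappa(L)=-\infty$ follows at once.

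The main point of care, which I expect to be the only real obstacle, is the upper bound $\kappa(L)\le s$: one has to verify that attaching the big nilpotent summand $x\,z_0^{nf-e}\,M^{(r)}_{nf}$ (whose size is controlled by the full nilradical dimension $r$, and which therefore involves the extra variables $z_{s+1},\dots,z_r$) cannot create algebraically independent homogeneous elements in $L$ beyond the $s+1$ already produced by the polynomial part. The reduction modulo $xA$, combined with $x^2=0$, is the mechanism that rules this out.
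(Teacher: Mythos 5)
Your proposal is correct and is essentially the paper's own argument: for $\dim\mathcal N_X\ge 1$ the paper proves Theorem \ref{TheoremN20} simply by citing the construction in the proof of Theorem \ref{TheoremN1} (oscillating shift $\sigma(n)$ and all, since only the lower bound $\dim_kL_n\ge Q_r(n)$ is needed), and your de-oscillated variant of that same construction, together with the reduction-mod-$xA$ verification that $\kappa(L)=s$, is a harmless simplification and elaboration of it. The only divergence is the case $r=\dim\mathcal N_X=0$, where the paper uses the trivial series $L=k[t]$ (for $s=0$) and $L_n=\Gamma(X,\mathcal N_X)$ for $n>0$ (for $s=-\infty$) rather than extending the polynomial construction, which your uniform treatment also handles correctly.
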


\begin{proof} Let $r=\dim \mathcal N_X$. When $r\ge 1$ and $s\le r$, this is established in the construction of Theorem \ref{TheoremN1}.
When $r=0$ and $s=0$, the graded linear series $L=k[t]$ (with associated line bundle $\mathcal O_X$) has $\kappa(L)=0$ and $\dim_kL_n=1$ for all $n$, so satisfies the bound.

Suppose that $r=0$ and $s=-\infty$. Then $0\ne \Gamma(X,\mathcal N_X)$ since the support of $\mathcal N_X$ is zero dimensional. Define $L_0=k$ and $L_n=\Gamma(X,\mathcal N_X)$ for $n>0$. Then $L=\bigoplus_{n\ge 0}L_n$ is a graded linear series for $\mathcal O_X$ with $\kappa(L)=-\infty$ which satisfies the bound.

\end{proof}

\begin{Theorem}\label{TheoremN2}
Suppose that $X$ is a $d$-dimensional projective scheme  over a field $k$ with $d>0$. Let $\mathcal N_X$ be the nilradical of $X$. Let $\alpha\in  \NN$. Then the following are equivalent:
\begin{enumerate}
\item[1)] For every graded linear series $L$ on $X$ with $\alpha\le\kappa(L)$, there exists a positive integer $r$ such that 
$$
\lim_{n\rightarrow\infty}\frac{\dim_kL_{a+nr}}{n^{\kappa(L)}}
$$
exists for every positive integer $a$.
\item[2)] For every graded linear series $L$ on $X$ with $\alpha\le \kappa(L)$, there exists an arithmetic sequence $a+nr$ (for fixed $r$ and $a$ depending on $L$) such that 
$$
\lim_{n\rightarrow\infty}\frac{\dim_kL_{a+nr}}{n^{\kappa(L)}}
$$
exists.
\item[3)] The nilradical $\mathcal N_X$ of $X$ satisfies $\dim \mathcal N_X<\alpha$.
\end{enumerate}
\end{Theorem}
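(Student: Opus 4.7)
The plan is to prove the cycle $3) \Rightarrow 1) \Rightarrow 2) \Rightarrow 3)$. The implication $1) \Rightarrow 2)$ is trivial: restrict to any single value of $a$. For $3) \Rightarrow 1)$, if $L$ is a graded linear series with $\kappa(L) \ge \alpha$, then the hypothesis $\dim \mathcal N_X < \alpha$ gives $\dim \mathcal N_X < \kappa(L)$, so Theorem \ref{Theorem8}(2) applies directly to furnish a period $r$ such that $\lim_{n\to\infty} \dim_k L_{a+nr}/n^{\kappa(L)}$ exists for every $a \in \NN$.

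The substantive content is $2) \Rightarrow 3)$, which I would prove by contraposition. Assume $\dim \mathcal N_X \ge \alpha$; since $\alpha \ge 0$, this forces $\mathcal N_X \ne 0$ and $s := \dim \mathcal N_X$ is a nonnegative integer with $s \le \dim \mathcal N_X$ (with equality). Applying Theorem \ref{TheoremN1} with this value of $s$ (so that $s$ plays the role of $r$ in that theorem), we obtain a graded linear series $L$ on $X$ with Kodaira-Iitaka dimension $\kappa(L) = s \ge \alpha$ for which $\lim_{n \to \infty} \dim_k L_n / n^s$ fails to exist along every arithmetic progression. Since $s = \kappa(L)$, this is precisely the limit appearing in condition 2), and its nonexistence along every arithmetic sequence $a + nr$ contradicts 2) for this particular $L$.

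In this way the proof of Theorem \ref{TheoremN2} is reduced to bookkeeping, and the real obstacle has already been surmounted in the two inputs: Theorem \ref{Theorem8}(2), whose argument leverages the reduced-scheme result Theorem \ref{Theorem18} together with the bound on sections of $\mathcal N_X \otimes \mathcal L^n$; and the construction of Theorem \ref{TheoremN1}, which splices a weighted polynomial subalgebra of a homogeneous coordinate ring of $X$ with a torsion element $x$ (satisfying $x^2 = 0$) supported on a maximal-dimensional irreducible component of the support of $\mathcal N_X$, modulating the contribution by the oscillating function $\sigma(n)$ of (\ref{eqsigma}) whose ratio to $n$ does not converge on any arithmetic progression.
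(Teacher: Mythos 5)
Your proposal is correct and follows the paper's own argument exactly: 1) $\Rightarrow$ 2) is immediate, 3) $\Rightarrow$ 1) is Theorem \ref{Theorem8}(2), and 2) $\Rightarrow$ 3) is the contrapositive via Theorem \ref{TheoremN1} with $s=\dim\mathcal N_X$, which is precisely how the paper cites these results. The only detail worth noting is the harmless normalization $(a+nr)^{\kappa(L)}$ versus $n^{\kappa(L)}$, which differ by the constant factor $r^{\kappa(L)}$ and so do not affect existence of the limit.
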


\begin{proof} 1) implies 2) is immediate. 2) implies 3) follows from Theorem \ref{TheoremN1}. 3) implies 1) follows from Theorem \ref{Theorem8}.
\end{proof}

\section{Nonreduced zero dimensional schemes}\label{SecZero}

The case when $d=\dim X=0$ is  rather special. In fact, the implication 2) implies ) of Theorem \ref{TheoremN2} does not hold if $d=0$, as follows from
Proposition \ref{TheoremN3} below. There is however a very precise statement about what does happen in zero dimensional schemes, as we show below.

\begin{Proposition}\label{TheoremN3} Suppose that $X$ is a 0-dimensional irreducible but nonreduced $k$-scheme and $L$ is a graded linear series on $X$ with $\kappa(L)=0$. Then there exists a positive integer $r$ such that  
$$
\lim_{n\rightarrow\infty}\dim_kL_{a+nr}
$$
exists for every positive integer $a$.
\end{Proposition}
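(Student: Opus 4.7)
The plan is as follows. Since $X$ is $0$-dimensional, irreducible, and proper over $k$, the ring $A:=\Gamma(X,\mathcal{O}_X)$ is a finite-dimensional local Artinian $k$-algebra with $X=\mbox{Spec}(A)$, and the maximal ideal $m_A$ coincides with the nilradical of $A$. Any line bundle $\mathcal{L}$ on $X$ is free of rank one, so after fixing a trivialization $\mathcal{L}\cong\mathcal{O}_X$ I identify the section ring with the polynomial ring $A[t]$ in one variable of degree $1$. Correspondingly I write $L=\bigoplus_{n\ge 0}L_nt^n$ where each $L_n\subset A$ is a $k$-subspace and $L_mL_n\subset L_{m+n}$ inside $A$. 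The crucial finite upper bound
$$
\dim_kL_n\le \dim_kA<\infty
$$
then holds for every $n$, so $\{\dim_kL_n\}$ is a bounded sequence of non-negative integers.

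The next step is to extract a unit of $A$ from the hypothesis $\kappa(L)=0$. This says $\sigma(L)\ge 1$, so some homogeneous element $y_0t^{n_0}\in L_{n_0}$ of positive degree $n_0$ is algebraically independent over $k$ inside $A[t]$. Algebraic independence forces $y_0^m\ne 0$ for every $m\ge 0$, so $y_0$ is not nilpotent in $A$; since $A$ is local Artinian, every non-nilpotent element is a unit, and hence $y_0\in A^{\times}$.

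Multiplication by $y_0t^{n_0}$ in the section ring restricts to the $k$-linear map $L_n\to L_{n+n_0}$, $f\mapsto y_0f$, which is injective because $y_0$ is a unit in $A$. Therefore $\dim_kL_n\le \dim_kL_{n+n_0}$ for every $n$. I would then set $r=n_0$: for any fixed $a\in\NN$, the sequence $\{\dim_kL_{a+nr}\}_{n\ge 0}$ is nondecreasing in $n$ and bounded above by $\dim_kA$, hence eventually constant, and in particular $\lim_{n\to\infty}\dim_kL_{a+nr}$ exists.

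There is no real obstacle in this argument; the only substantive point is the passage from algebraic independence of a single homogeneous section to the unit property of its coefficient in $A$, which uses that $A$ is local Artinian in an essential way. This rigidity is exactly what the $0$-dimensional irreducible case provides, and is what is absent from the reducible $0$-dimensional or higher-dimensional nonreduced settings that were used to construct the oscillating examples earlier in the paper.
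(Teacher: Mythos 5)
Your proof is correct and follows essentially the same route as the paper: identify the section ring with $A[t]$ for the Artinian local ring $A=\Gamma(X,\mathcal O_X)$, use $\kappa(L)=0$ to produce a unit of $A$ in some $L_r$ (the paper states this directly, while you justify it via non-nilpotence of an algebraically independent homogeneous element), and then conclude from the injective multiplication map that $\dim_k L_{a+nr}$ is nondecreasing and bounded by $\dim_k A$, hence eventually constant. Your explicit verification that algebraic independence forces the coefficient to be a unit is a welcome elaboration of a step the paper leaves implicit.
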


\begin{proof} With our assumptions, $X=\mbox{Spec}(A)$ where $A$ is a nonreduced Artin local ring, with $\dim_kL<\infty$, and $L$ is a graded $k$-subalgebra of $\Gamma(X,\mathcal O_X)[t]=A[t]$. The condition $\kappa(L)=0$ is equivalent to the statement that there exists $r>0$ such that $L_r$ contains a unit $u$ of $A$. We then have that 
$$
\dim_kL_{m+r}\ge \dim_kL_mL_r\ge \dim_kL_m
$$
for all $m$. Thus for fixed $a$, $\dim_kL_{a+nr}$ must stabilize for large $n$.
\end{proof}

We do not have such good behavior for graded linear series $L$ with $\kappa(L)=\infty$. 

\begin{Proposition}\label{TheoremN6} Suppose that $X$ is a 0-dimensional  nonreduced $k$-scheme. Then there exists a  graded linear series $L$ on $X$ with $\kappa(L)=-\infty$, such that 
$$
\lim_{n\rightarrow\infty}\dim_kL_{n}
$$
does not exist, even when $n$ is constrained to lie in any arithmetic sequence.
\end{Proposition}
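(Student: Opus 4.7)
The plan is to mimic the construction used for Proposition~\ref{TheoremN3} and Theorem~\ref{TheoremN1}, but to make the oscillation come entirely from nilpotents, so that the Kodaira--Iitaka dimension is $-\infty$. Write $X=\mathrm{Spec}(A)$, where $A$ is a finite-dimensional nonreduced $k$-algebra, and take $\mathcal L=\mathcal O_X$, so the ambient section ring is $A[t]$ with $\Gamma(X,\mathcal L^n)=A\cdot t^n$. Since $X$ is nonreduced, $A$ admits a nonzero nilpotent element $y$; picking $x=y^{\lceil k/2\rceil}$ where $k$ is the index of nilpotency of $y$, I obtain an element $x\in A$ with $x\neq 0$ and $x^2=0$.

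With this $x$, and using the function $\tau(n)$ of (\ref{eqtau}), I define a graded linear series by setting $L_0=k$ and, for $n\ge 1$,
$$
L_n=\begin{cases} 0 & \text{if } \tau(n)=0,\\ k\cdot x\, t^n & \text{if } \tau(n)=1.\end{cases}
$$
The containment $L_mL_n\subset L_{m+n}$ is automatic: the product of any two positive-degree homogeneous pieces lies in $k\cdot x^2 t^{m+n}=0$. Thus $L=\bigoplus_{n\ge 0}L_n$ is a graded $k$-subalgebra of $A[t]$, hence a graded linear series for $\mathcal O_X$.

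To verify that $\kappa(L)=-\infty$, observe that any nonzero homogeneous element of positive degree in $L$ is of the form $cx t^n$ with $c\in k^{\times}$ and $n\ge 1$; its square is $c^2 x^2 t^{2n}=0$, so this element is algebraic (indeed nilpotent) over $k$. Therefore no single homogeneous element of positive degree is transcendental, $\sigma(L)=0$, and $\kappa(L)=-\infty$ by definition.

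Finally, I must show that $\dim_k L_n=\tau(n)$ fails to converge along any arithmetic progression $a+nr$. By construction $i_{j+1}>2^j i_j$, so the length $i_{j+1}-i_j$ of the $j$-th constancy block of $\tau$ tends to infinity; in particular, for $j$ large enough every arithmetic progression with common difference $r$ meets the interval $[i_j,i_{j+1}]$. Since $\tau$ alternates between $0$ and $1$ on consecutive such blocks, the sequence $\tau(a+nr)$ takes both values infinitely often, and so $\lim_{n\to\infty}\dim_kL_{a+nr}$ does not exist. There is no real obstacle in this argument; the only subtlety is the preliminary reduction to an element $x$ with $x^2=0$, which is needed to guarantee that the algebra relations in $L$ close up while keeping $L_n\neq 0$ whenever $\tau(n)=1$.
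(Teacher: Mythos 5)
Your proposal is correct and takes essentially the same route as the paper: both constructions make $\dim_k L_n$ oscillate according to the function $\tau(n)$ of (\ref{eqtau}) using nilpotent sections, so that every positive-degree homogeneous element is nilpotent and $\kappa(L)=-\infty$. The paper implements this by taking $L^1_n=m_{A_1}^{t+\tau(n)}$ on a nonreduced Artin local factor $A_1$ and extending by zero to the other components, whereas you use a single square-zero element $x$ so that $L_n$ oscillates between $0$ and the line $k\,x\,t^n$; this is only a minor simplification (and you additionally justify, rather than merely assert, that $\tau(a+nr)$ fails to converge along every arithmetic progression).
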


\begin{proof}  $X=\mbox{Spec}(A)$ where $A=\bigoplus_{i=1}^sA_i$, with $s$ the number of irreducible components of $X$ and the $A_i$ are  Artin local rings with $\dim_kA<\infty$. Let $m_{A_1}$ be the maximal ideal of $A_1$.  
There exists a number $0<t$ such that $m_{A_1}^t\ne 0$ but $m_{A_1}^{t+1}=0$. Let $\tau(n)$ be the function defined in (\ref{eqtau}).

Define a graded linear series $L^1$  on $\mbox{Spec}(A_1)$ by $L^1_n=m{A_1}^{t+\tau(n)}$. Then $\lim_{n\rightarrow \infty}\dim_k L_n^1$ does not exist, even when $n$ is constrained to  lie in an arithmetic sequence. Extend $L$ to a graded linear series $L$ on $X$ with $\kappa(L)=-\infty$ be setting
$L_n=L_n^1\bigoplus (0)\bigoplus\cdots\bigoplus (0)$.
\end{proof}

It follows that the conclusions of Proposition \ref{TheoremN3} do not hold in nonreduced 0-dimensional schemes which are not irreducible.

\begin{Proposition}\label{TheoremN7}  Suppose that $X$ is a 0-dimensional {\it nonirreducible} and nonreduced $k$-scheme. Then there exists a  graded linear series $L$ on $X$ with $\kappa(L)=0$, such that 
\begin{equation}\label{eqN10}
\lim_{n\rightarrow\infty}\dim_kL_{n}
\end{equation}
does not exist, even when $n$ is constrained in any arithmetic sequence.
\end{Proposition}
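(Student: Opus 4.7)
The plan is to combine the oscillating-powers-of-nilpotents trick used in Proposition~\ref{TheoremN6} (compare Example~\ref{Example1} and Theorem~\ref{Theorem3}) with an idempotent sitting on a different irreducible component of $X$, whose role is to supply a homogeneous positive-degree element of $L$ that is algebraically independent over $k$, thereby forcing $\kappa(L)=0$ rather than $-\infty$. The presence of a second irreducible component is exactly the nonirreducibility hypothesis and is what prevents the stabilization argument of Proposition~\ref{TheoremN3} from going through: there, every element of $L_r$ of positive value has to act on the whole ring, and if it is a unit it propagates the low-degree piece into $L_{a+nr}$ so that dimensions stabilize.

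For the setup, I would write $X=\mathrm{Spec}(A)$ with $A=A_1\oplus\cdots\oplus A_s$, $s\ge 2$; after reindexing, I may assume $A_1$ is nonreduced. Let $e_i\in A$ be the primitive idempotent supported on the $i$th block, and let $T\ge 1$ be the largest integer with $m_{A_1}^T\ne 0$. Since $X$ is a zero-dimensional proper $k$-scheme, the section ring of $\mathcal O_X$ is the polynomial ring $A[t]$. Let $\tau(n)$ be the $\{0,1\}$-valued function from (\ref{eqtau}); recall from the proof of Proposition~\ref{TheoremN6} (in the same way as for $\sigma$) that $\tau$ fails to be eventually constant along any arithmetic progression. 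I set $L_0=k$ and, for $n\ge 1$,
$$
L_n \;=\; k\cdot e_2 t^n \;+\; m_{A_1}^{T+\tau(n)}\cdot t^n,
$$
and the sum is $k$-vector-space direct because the two summands lie in different block components of $A$.

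Next I would check that $L:=\bigoplus_n L_n$ is a graded $k$-subalgebra of $A[t]$, hence a graded linear series on $X$ for $\mathcal O_X$. The inclusion $L_m L_n\subset L_{m+n}$ is immediate: the cross terms vanish because $e_2\, m_{A_1}=0$, and the nilpotent-nilpotent term vanishes because $m_{A_1}^{T+\tau(m)}m_{A_1}^{T+\tau(n)}\subset m_{A_1}^{2T}=0$; so $L_mL_n$ collapses to $k\cdot e_2 t^{m+n}\subset L_{m+n}$. For the Kodaira--Iitaka dimension I would take $y:=e_2 t\in L_1$: because $e_2$ is a nonzero idempotent, $y^N=e_2 t^N\ne 0$ for every $N\ge 1$, so $1,y,y^2,\ldots$ are $k$-linearly independent in $A[t]$, i.e.\ $y$ is algebraically independent over $k$. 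Hence $\sigma(L)\ge 1$; combined with $\kappa(L)\le\dim X=0$ from Lemma~\ref{LemmaKI}(1), this gives $\kappa(L)=0$.

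Finally, the direct-sum decomposition of $L_n$ yields
$$
\dim_k L_n \;=\; 1+\dim_k m_{A_1}^{T+\tau(n)},
$$
which equals $1+\dim_k m_{A_1}^T>1$ when $\tau(n)=0$ and equals $1$ when $\tau(n)=1$. Since $\tau$ is not eventually constant along any arithmetic progression, $\dim_k L_n$ oscillates between these two distinct values along every arithmetic progression, and (\ref{eqN10}) fails as claimed. The only genuinely nontrivial point I expect is the algebraic-independence check that promotes $\kappa(L)$ from $-\infty$ to $0$, which is exactly where the nonirreducibility hypothesis enters essentially; multiplicative closure and the length computation are routine consequences of orthogonality of the two block components.
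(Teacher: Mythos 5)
Your proof is correct and follows essentially the same strategy as the paper: use the block decomposition $A=A_1\oplus\cdots\oplus A_s$ so that one block supplies an algebraically independent degree-one element (forcing $\kappa(L)=0$) while a nonreduced block carries an oscillating family governed by $\tau(n)$ that kills every subsequential limit. The only cosmetic difference is that the paper takes $L_n=A_1\oplus(L^2)_n$ and delegates the oscillation to Proposition~\ref{TheoremN7}'s predecessor, Proposition~\ref{TheoremN6}, whereas you inline the oscillating piece $m_{A_1}^{T+\tau(n)}t^n$ directly and pare the second summand down to $k\,e_2t^n$.
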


\begin{proof} $X=\mbox{Spec}(A)$ where $A=A_1\bigoplus A_2$, with $A_1$ an Artin local ring and $A_2$  an Artin ring.
 A graded linear series $L$ on $X$ is a graded $k$-subalgebra of $A[t]$. Let $L^2$ be a graded linear series on $\mbox{Spec}(A_2)$ with $\kappa(L^2)=-\infty$, such that the conclusions of Proposition \ref{TheoremN6} hold. Then the linear series $L$ on $X$ defined by 
$$
L_n=A_1\bigoplus (L^2)_n
$$
has $\kappa(L)=0$, but 
$$
\lim_{n\rightarrow\infty}\dim_kL_{n}=1+\lim_{n\rightarrow \infty}\dim_k L^2_n
$$
does not exist, even when $n$ is constrained to lie in any arithmetic sequence.  

\end{proof}

In particular, the conclusions of Theorem \ref{TheoremN2} are true for 0-dimensional projective $k$-schemes which are not irreducible.

\section{Applications to asymptotic multiplicities}\label{SecApp}

In this section, we apply Theorem \ref{Theorem1} and its method of proof, to generalize some of the applications in \cite{C1} to analytically irreducible and analytically unramified local rings.

\begin{Theorem}\label{Theorem14} Suppose that $R$ is an analytically unramified  local ring of dimension $d>0$. 
Suppose that $\{I_i\}$ and $\{J_i\}$ are graded families of ideals in $R$. Further suppose that $I_i\subset J_i$ for all $i$ and there exists $c\in\ZZ_+$ such that
\begin{equation}\label{eq60}
m_R^{ci}\cap I_i= m_R^{ci}\cap J_i
\end{equation}
 for all $i$. Then the limit
$$
\lim_{i\rightarrow \infty} \frac{\ell_R(J_i/I_i)}{i^{d}}
$$
exists.
\end{Theorem}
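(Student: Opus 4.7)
The strategy is to reduce to the complete reduced case, decompose along minimal primes into analytically irreducible components, and then apply the semigroup/valuation method of Theorem \ref{Theorem1} on each component.

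I would first pass to the completion. Since $m_R^{ci}\cdot J_i\subseteq m_R^{ci}\cap J_i = m_R^{ci}\cap I_i\subseteq I_i$, the module $J_i/I_i$ is annihilated by $m_R^{ci}$ and so has finite length, giving $\ell_R(J_i/I_i)=\ell_{\hat R}(J_i\hat R/I_i\hat R)$ by flatness of completion. The intersection hypothesis is preserved in $\hat R$ because intersections of finitely generated ideals commute with flat base change, and $\hat R$ is reduced by the analytically unramified assumption. So I may assume $R=\hat R$ is complete and reduced. Let $P_1,\dots,P_s$ be the minimal primes of $R$ and $B_j=R/P_j$; each $B_j$ is a complete local domain of some dimension $d_j\le d$.

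Next I would mimic the proof of Lemma \ref{Lemma5}, but applied to the quotient $J_i/I_i$ rather than $R/I_n$, using the exact sequence $0\to R\to \bigoplus_j B_j\to C\to 0$ with $\dim C<d$ together with the fact that $J_i/I_i$ is killed by $m_R^{ci}$, to establish a comparison
$$
\Bigl|\,\ell_R(J_i/I_i)-\sum_{j=1}^s\ell_{B_j}(J_iB_j/I_iB_j)\,\Bigr|\le\alpha\,i^{d-1}
$$
for some constant $\alpha$. Contributions from those $j$ with $d_j<d$ are then absorbed into this error term, so it suffices to show that $\lim_{i\to\infty}\ell_{B_j}(J_iB_j/I_iB_j)/i^d$ exists for each $j$ with $d_j=d$.

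For such a $j$ I would run the valuation/semigroup machinery from the proof of Theorem \ref{Theorem1} inside $B_j$. Choose a regular local ring $S_j$ essentially of finite type and birational over $B_j$, a regular system of parameters in $S_j$, and rationally independent weights to produce a valuation $\nu_j$ of $Q(B_j)$ dominating $S_j$ whose residue field is finite over $k=R/m_R$; Lemma 4.3 of \cite{C1} gives a constant $\alpha_j$ with $K_{j,\alpha_j n}\cap B_j\subseteq m_{B_j}^n$. The crucial step is to choose $\beta_j$ so that
$$
K_{j,\beta_j i}\cap J_iB_j = K_{j,\beta_j i}\cap I_iB_j\qquad\text{for all }i.
$$
Once this is in hand, one has the length decomposition
$\ell_{B_j}(J_iB_j/I_iB_j)=\ell_{B_j}(J_iB_j/K_{j,\beta_j i}\cap J_iB_j)-\ell_{B_j}(I_iB_j/K_{j,\beta_j i}\cap I_iB_j)$,
and each term is a finite sum, for $1\le t\le [V_{\nu_j}/m_{\nu_j}:k]$, of cardinalities of slices of subsemigroups of $\ZZ^{d+1}$ constructed exactly as the $\Gamma^{(t)}$ of the proof of Theorem \ref{Theorem1}. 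Lemmas \ref{Lemmared3} and \ref{Lemmared4} verify (\ref{Cone2}) and (\ref{Cone3}) for these semigroups, so Theorem \ref{ConeTheorem1} delivers the required limits as integral volumes of Newton--Okounkov bodies. Summing over $t$ and $j$, and combining with the error estimate, completes the proof.

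The main obstacle is the displayed valuation-ideal equality. The hypothesis $m_R^{ci}\cap J_i = m_R^{ci}\cap I_i$ only yields $(m_R^{ci}\cap J_i)B_j\subseteq I_iB_j$ on passage to the quotient, and in general $(m_R^{ci}\cap J_i)B_j$ is strictly smaller than the target $m_{B_j}^{ci}\cap J_iB_j$ into which the valuation ideal $K_{j,\alpha_j ci}\cap J_iB_j$ naturally lands. Bridging this gap requires an Artin--Rees type estimate controlling the $P_j$-adic behavior of the ideals $J_i+P_j$ inside the complete local ring $R$, allowing one to enlarge $c$ to some $c_j=c+k_j$ depending only on $P_j$ (not on $i$) and then to take $\beta_j=\alpha_j c_j$; making this Artin--Rees step uniform in $i$ is the technical heart of the argument.
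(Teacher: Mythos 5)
Your reduction to a complete reduced ring, your analytically irreducible case (via the valuation and semigroup machinery of Theorem \ref{Theorem1}), and your recognition that the heart of the matter is arranging the equality $K_{\beta i}\cap J_i = K_{\beta i}\cap I_i$ inside the valuation ring — all of this is correct and matches the paper. But the way you pass from the reduced ring to its components has two genuine gaps, and the paper takes a different route precisely to avoid them.

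The first gap is the claimed error estimate $\bigl|\ell_R(J_i/I_i)-\sum_j\ell_{B_j}(J_iB_j/I_iB_j)\bigr|\le\alpha i^{d-1}$. You say this follows by ``mimicking'' Lemma \ref{Lemma5}, but that lemma is proved for $m_R$-primary ideals, where one compares $\ell_R(R/I_n)$ with $\sum_j\ell_{R_j}(R_j/I_nR_j)$ and controls the error by the module $C$ in $0\to R\to\bigoplus R_j\to C\to 0$ with $\dim C<d$. Here $J_i$ and $I_i$ are not $m_R$-primary, $\ell_R(R/I_i)$ is generally infinite, and the kernel of $J_i/I_i\to J_iB_j/I_iB_j$ is $(J_i\cap P_j)/(I_i\cap P_j)$ — not visibly related to $C$. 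You do not indicate how to carry out the mimicry, and it is not clear that it can be done. The second gap you already flag yourself: to get $K_{j,\beta_ji}\cap J_iB_j=K_{j,\beta_ji}\cap I_iB_j$ you would need $m_{B_j}^{c_ji}\cap J_iB_j\subseteq I_iB_j$ for a $c_j$ independent of $i$; the natural Artin--Rees argument here is applied to the submodule $J_i/(J_i\cap P_j)\subseteq B_j$, and since that submodule changes with $i$, the Artin--Rees constant is not a priori uniform. Identifying this as ``the technical heart'' is correct, but a proof must actually supply it, and your sketch does not.

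The paper sidesteps both difficulties with an exact telescoping decomposition rather than an approximate additivity. After one uniform Artin--Rees estimate at the level of $R\hookrightarrow T=\bigoplus R_j$ — giving $m_R^n\subseteq\omega_n:=\bigcap_j(m_R^n+P_j)\subseteq m_R^{n-\lambda}$ for a single $\lambda$ — one sets $\beta=(\lambda+1)c$ and observes $\omega_{\beta n}\cap J_n=\omega_{\beta n}\cap I_n$, so $\ell_R(J_n/I_n)=\ell_R(J_n/\omega_{\beta n}\cap J_n)-\ell_R(I_n/\omega_{\beta n}\cap I_n)$ exactly. Then the filtration $L_n^j=(m_R^{\beta n}+P_1)\cap\cdots\cap(m_R^{\beta n}+P_j)\cap J_n$, $j=0,\dots,s$, runs from $J_n$ down to $\omega_{\beta n}\cap J_n$ and gives exact equalities $\ell_R(L_n^j/L_n^{j+1})=\ell_{R_{j+1}}\bigl(L_n^jR_{j+1}/(L_n^jR_{j+1}\cap m_{R_{j+1}}^{\beta n})\bigr)$, no error term at all. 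Crucially, the families fed into the irreducible case are $\overline J_n=L_n^jR_{j+1}$ and $\overline I_n=\overline J_n\cap m_{R_{j+1}}^{\beta n}$, for which the hypothesis $m_{\overline R}^{\beta n}\cap\overline I_n=m_{\overline R}^{\beta n}\cap\overline J_n$ holds by construction — no per-component, per-$i$ Artin--Rees is required, and no comparison with $J_iB_j/I_iB_j$ is needed. You should replace both your error estimate and your ``bridging'' step with this telescoping argument.
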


Theorem \ref{Theorem14}  is proven for local rings $R$ which are regular, or normal excellent and equicharacteristic in \cite{C1}.

\begin{Remark} A reduced analytic local ring $R$ satisfies the hypotheses  of Theorem \ref{Theorem14}. 
An analytic local ring is excellent by Theorem 102 on page 291 \cite{Ma}. A reduced, excellent local ring is unramified by
(x) of Scholie 7.8.3 \cite{EGAIV}.
\end{Remark}

\begin{proof}  We may assume that $R$ is complete, by replacing $R$, $I_i$, $J_i$ by $\hat R$, $I_i\hat R$, $J_i\hat R$. 

First suppose that $R$ is analytically irreducible. We will prove the theorem in this case. We will apply the method of Theorem \ref{Theorem1}.
 Construct the regular local ring $S$ by the argument of the proof of Theorem \ref{Theorem1}.

Let $\nu$ be the valuation of $Q(R)$ constructed from $S$ in the proof of Theorem \ref{Theorem1}, with associated valuation ideals $K_{\lambda}$ in the valuation ring $V_{\nu}$ of $\nu$. Let $k=R/m_R$ and $k'=S/m_S=V_{\nu}/m_{\nu}$.

By (\ref{eqred50}), there exists
$\alpha\in \ZZ_+$ such that 
$$
K_{\alpha n}\cap R\subset m_R^n
$$
for all $n\in \ZZ_+$. 
We have that 
$$
K_{\alpha cn}\cap I_n=K_{\alpha cn}\cap J_n
$$
for all $n$. Thus
\begin{equation}\label{eq31}
\ell_R(J_n/I_n)=\ell_R(J_n/K_{\alpha cn}\cap J_n)-\ell_R(I_n/K_{\alpha cn}\cap I_n)
\end{equation}
for all $n$. Let $\beta=\alpha c$. For $t\ge 1$, let
$$
\Gamma(J_*)^{(t)}=\{\begin{array}{l}
(n_1,\ldots,n_d,i)\mid \dim_k J_i\cap K_{n_1\lambda_1+\cdots+n_d\lambda_d}/J_i\cap K_{n_1\lambda_1+\cdots+n_d\lambda_d}^+\ge t\\
\mbox{ and }n_1+\cdots+n_d\le \beta i, 
\end{array}\}
$$
and
$$
\Gamma(I_*)^{(t)}=\{\begin{array}{l}
(n_1,\ldots,n_d,i)\mid \dim_k I_i\cap K_{n_1\lambda_1+\cdots+ n_d\lambda_d}/I_i\cap K_{n_1\lambda_1+\cdots+n_d\lambda_d}^+\ge t\\
\mbox{ and }n_1+\cdots+n_d\le \beta i
\end{array}\}.
$$
We have that 
\begin{equation}\label{eq32}
\ell_R(J_n/I_n)=(\sum_{t=1}^{[k':k]} \# \Gamma(J_*)^{(t)}_n)-(\sum_{t=1}^{[k':k]} \# \Gamma(I_*)^{(t)}_n)
\end{equation}
as explained in the proof of Theorem \ref{Theorem1}. As in the proof of Lemma \ref{Lemmared3}, we have that $\Gamma(J_*)^{(t)}$ and $\Gamma(I_*)^{(t)}$
  satisfy the conditions
(\ref{Cone2}) and (\ref{Cone3})of Theorem \ref{ConeTheorem1}. Thus
$$
\lim_{n\rightarrow \infty} \frac{\# \Gamma(J_*)^{(t)}_n}{n^d}={\rm vol}(\Delta(\Gamma(J_*))\mbox{ and }\lim_{n\rightarrow \infty} \frac{\# \Gamma(I_*)^{(t)}_n}{n^d}
={\rm vol}(\Delta(\Gamma(I_*)^{(t)})
$$
by Theorem \ref{ConeTheorem1}. 
The theorem, in the case when $R$ is analytically irreducible, now follows from (\ref{eq32}).

Now suppose that $R$ is only analytically unramified. We may continue to assume that $R$ is  complete. 
Let $P_1,\ldots,P_s$ be the minimal primes of $R$. Let $R_i=R/P_i$ for $1\le i\le s$. Let $T=\bigoplus_{i=1}^s R_i$ and $\phi:R\rightarrow T$ be the natural inclusion. By Artin-Rees, there exists a positive integer $\lambda$ such that
$$
\omega_n:=\phi^{-1}(m_R^nT)=R\cap m_R^nT\subset m_R^{n-\lambda}
$$
for all $n\ge \lambda$. 
Thus
$$
m_R^n\subset \omega_n\subset m_R^{n-\lambda}
$$
for all $n$.
We have that
$$
\omega_n=\phi^{-1}(m_R^nT)=\phi^{-1}(m_R^nR_1\bigoplus\cdots\bigoplus m_R^nR_s)
=(m_R^n+P_1)\cap \cdots \cap(m_R^n+P_s).
$$
Let $\beta=(\lambda+1)c$.
Now $\omega_{\beta n}\subset m_R^{c(\lambda+1)n-\lambda}\subset m_R^{cn}$ for all $n\ge 1$, so that 
$$
\omega_{\beta n}\cap I_n=\omega_{\beta n}\cap(m_R^{cn}\cap I_n)=\omega_{\beta n}\cap(m_R^{cn}\cap J_n)=\omega_{\beta n}\cap J_n
$$
for all $n\ge 1$, so 
\begin{equation}\label{eqNew1}
\ell_R(J_n/I_n)=\ell_R(J_n/\omega_{\beta n}\cap J_n)-\ell_R(I_n/\omega_{\beta n}\cap I_n)
\end{equation}
for all $n\ge 1$.

Define $L_0^j=R$ for $0\le j\le s$, and for $n>0$, define $L_n^0=J_n$ and
$$
L_n^j=(m_R^{\beta n}+P_1)\cap\cdots\cap (m_R^{\beta n}+P_j)\cap J_n
$$
for $1\le j\le s$. For fixed $j$, with $0\le j\le s$, $\{L_n^j\}$ is a graded family of ideals in $R$. For $n\ge 1$, we have isomorphisms
$$
L_n^j/L_n^{j+1}= L_n^j/(m_R^{\beta n}+P_{j+1})\cap L_n^j\cong L_n^jR_{j+1}/(L_n^jR_{j+1})\cap m_{R_{j+1}}^{\beta n}
$$
for $0\le j\le s-1$, and 
$$
L_n^s=\omega_{\beta n}\cap J_n.
$$
Thus
\begin{equation}\label{eqNew2}
\ell_R(J_n/\omega_{\beta n}\cap J_n)=\sum_{j=0}^{s-1}\ell_R(L_n^j/L_n^{j+1})
=\sum_{j=0}^{s-1}\ell_{R_{j+1}}\left(L_n^jR_{j+1}/(L_n^jR_{j+1})\cap m_{R_{j+1}}^{\beta n}\right).
\end{equation}

For some fixed $j$ with $0\le j\le s-1$, let $\overline R=R_{j+1}$, $\overline J_n=L_n^j\overline R$  and $\overline I_n =\overline J_n\cap m_{\overline R}^{\beta n}$.
$\{\overline I_n\}$ and $\{\overline J_n\}$ are graded families of ideals in  $\overline R$
and $m_{\overline R}^{\beta n}\cap\overline I_n=m_{\overline R}^{\beta n}\cap\overline J_n$ for all $n$.

Since $\dim \overline R\le \dim R=d$ and $\overline R$ is  analytically irreducible, by the first part of the proof we have that
$$
\lim_{n\rightarrow \infty}\frac{\ell_{\overline R}(\overline J_n/\overline I_n)}{n^d}
$$
exists, and from (\ref{eqNew2}), we have that
$$
\lim_{n\rightarrow \infty}\frac{\ell_R(J_n/\omega_{\beta n}\cap J_n)}{n^d}
$$
exists. The same argument applied to the graded family of ideals $\{I_n\}$ in $R$ implies that 
$$
\lim_{n\rightarrow \infty}\frac{\ell_R(I_n/\omega_{\beta n}\cap I_n)}{n^d}
$$
exists. Finally, (\ref{eqNew1}) implies that the limit
$$
\lim_{n\rightarrow \infty}\frac{\ell_R(J_n/I_n)}{n^d}
$$
exists.

\end{proof}

If $R$ is a local ring and $I$ is an ideal in $R$ then the saturation of $I$ is 
$$
I^{\rm sat}=I:m_R^{\infty}=\cup_{k=1}^{\infty}I:m_R^k.
$$

\begin{Corollary}\label{Corollary5} Suppose that $R$ is an analytically unramified local ring of dimension $d>0$ and  $I$ is an ideal in $R$. Then the limit
$$
\lim_{i\rightarrow \infty} \frac{\ell_R((I^i)^{\rm sat}/I^i)}{i^{d}}
$$
exists.

\end{Corollary}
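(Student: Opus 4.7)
The plan is to apply Theorem \ref{Theorem14} to the pair of graded families $I_i := I^i$ and $J_i := (I^i)^{\rm sat}$. The graded-family property for $\{(I^i)^{\rm sat}\}$ is immediate: if $m_R^a x \subset I^i$ and $m_R^b y \subset I^j$, then $m_R^{a+b}(xy) \subset I^{i+j}$, so $(I^i)^{\rm sat}(I^j)^{\rm sat} \subset (I^{i+j})^{\rm sat}$; the inclusion $I^i \subset (I^i)^{\rm sat}$ is tautological.

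The substance of the proof lies in producing a positive integer $c$ such that
$$
m_R^{ci} \cap (I^i)^{\rm sat} \subset I^i \qquad \text{for every } i \ge 1,
$$
which, together with the trivial reverse inclusion, gives hypothesis (\ref{eq60}) of Theorem \ref{Theorem14}. The approach has two ingredients. First, since $R$ is analytically unramified, its completion $\hat R$ is reduced and excellent, so Huneke's uniform Artin--Rees theorem supplies an integer $k$ with $m_R^n \cap K \subset m_R^{n-k}K$ for every ideal $K$ of $R$ and every $n \ge k$; one moves between $R$ and $\hat R$ using faithful flatness and the preservation of length for $m_R$-coprimary modules. Applied with $K = (I^i)^{\rm sat}$, this reduces the desired inclusion to producing an integer $N$ for which
$$
m_R^{Ni} \cdot (I^i)^{\rm sat} \subset I^i \qquad \text{for every } i.
$$
Once such $N$ is available, taking $c := N+k$ (adjusted to absorb finitely many small values of $i$) yields the required linear comparison.

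The linear annihilator bound in the last display is the main obstacle. It is equivalent to the statement that in a minimal primary decomposition of $I^i$ the $m_R$-primary component contains $m_R^{Ni}$ with a uniform $N$, or equivalently that the $a$-invariant of the module $H^0_{m_R}(R/I^i) = (I^i)^{\rm sat}/I^i$ grows at most linearly in $i$. I would establish it by passing to $\hat R$ and working inside the Rees algebra $\mathcal R = R[It]$: Brodmann's theorem stabilizes $\mathrm{Ass}(R/I^i)$ for large $i$, and the analytically unramified hypothesis supplies the finiteness properties of auxiliary graded $\mathcal R$-modules (for instance the integral closure Rees module $\bigoplus_i \overline{I^i}\,t^i$ is module-finite over $\mathcal R$ by a theorem of Rees and Nagata) which control the $a$-invariant of $\bigoplus_i H^0_{m_R}(R/I^i)$ linearly in $i$. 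With $c$ in hand, Theorem \ref{Theorem14} then delivers the existence of the claimed limit at once.
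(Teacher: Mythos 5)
Your reduction is the right one: the corollary does follow from Theorem \ref{Theorem14} applied to $I_i=I^i$, $J_i=(I^i)^{\rm sat}$, the graded-family property of the saturations is verified correctly, and the whole content is the linear comparison $m_R^{ci}\cap (I^i)^{\rm sat}= m_R^{ci}\cap I^i$, i.e.\ hypothesis (\ref{eq60}). But your route to that comparison has a genuine gap. The uniform Artin--Rees theorem of Huneke does not say what you use: its uniformity is over the ideal $J$ whose powers are taken, for a \emph{fixed} pair of modules $N\subset M$; it does not give a single $k$ with $m_R^n\cap K\subset m_R^{n-k}K$ for \emph{every} ideal $K$. That statement is false already in a DVR: for $R=k[[x]]$, $K=(x^a)$ with $a>k$ and $n=a$ one has $m_R^n\cap K=(x^a)\not\subset (x^{2a-k})=m_R^{n-k}K$. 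So your reduction to the annihilator bound $m_R^{Ni}(I^i)^{\rm sat}\subset I^i$ is not justified; to make it work you would need the Artin--Rees constants of the varying pairs $(I^i)^{\rm sat}\subset R$ (with respect to $m_R$) to grow at most linearly in $i$, which is not addressed and is essentially of the same depth as the result you are trying to prove. The second ingredient is also shaky: $\bigoplus_i (I^i)^{\rm sat}$ (equivalently $\bigoplus_i H^0_{m_R}(R/I^i)$) need not be a finitely generated module over the Rees algebra $R[It]$, so finiteness of the integral closure Rees module does not by itself give a well-defined, linearly controlled $a$-invariant for it; no argument is given for the claimed linear bound.

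The short correct argument, which is the paper's, is to invoke Swanson's theorem on linear growth of primary decompositions (Theorem 3.4 of \cite{S}): there is $c\in\ZZ_+$ such that each $I^n$ has an irredundant primary decomposition $I^n=q_1(n)\cap\cdots\cap q_s(n)$ with $q_1(n)$ the $m_R$-primary component and $m_R^{nc}\subset q_1(n)$. Since $(I^n)^{\rm sat}=q_2(n)\cap\cdots\cap q_s(n)$, one gets immediately $m_R^{nc}\cap I^n=m_R^{nc}\cap (I^n)^{\rm sat}$ for all $n$, which is (\ref{eq60}); note this step needs no hypothesis on $R$ beyond Noetherianity---the analytically unramified assumption is used only inside Theorem \ref{Theorem14}. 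If you want to salvage your outline, replace the uniform Artin--Rees step and the Rees-algebra $a$-invariant sketch by a citation of Swanson's theorem (or prove a linear Artin--Rees bound for the family $\{(I^i)^{\rm sat}\}$, which is essentially Swanson's result).
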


Since $(I^n)^{\rm sat}/I^n\cong H^0_{m_R}(R/I^n)$, the epsilon multiplicity of Ulrich and Validashti \cite{UV}
$$
\epsilon(I)=\limsup \frac{\ell_R(H^0_{m_R}(R/I^n))}{n^d/d!}
$$
exists as a limit, under the assumptions of Corollary \ref{Corollary5}.

Corollary \ref{Corollary5} is proven for more general families of modules when $R$ is a local domain which is essentially of finite type over a perfect field $k$ such that $R/m_R$ is algebraic over $k$ in \cite{C}. The corollary is proven with more restrictions on $R$ in Corollary 6.3 \cite{C1}. The limit in corollary \ref{Corollary5} can be irrational, as shown in \cite{CHST}.

\begin{proof} By Theorem 3.4 \cite{S}, there exists $c\in\ZZ_+$ such that each power $I^n$ of $I$ has an irredundant primary decomposition 
$$
I^n=q_1(n)\cap\cdots\cap q_s(n)
$$
where $q_1(n)$ is $m_R$-primary and $m_R^{nc}\subset q_1(n)$ for all $n$. Since $(I^n)^{\rm sat}=q_2(n)\cap \cdots\cap q_s(n)$,
we have that 
$$
I^n\cap m_R^{nc}=m_R^{nc}\cap q_2(n)\cap \cdots \cap q_s(n)=m_R^{nc}\cap (I^n)^{\rm sat}
$$
for all $n\in \ZZ_+$. Thus the corollary follows from Theorem \ref{Theorem14}, taking $I_i=I^i$ and $J_i=(I^i)^{\rm sat}$.

\end{proof}

A stronger version of the previous corollary is true. 
The following corollary proves a formula proposed by Herzog, Puthenpurakal and Verma in the introduction to \cite{HPV}.
The formula is proven with more restrictions on $R$ in Corollary 6.4 \cite{C1}.

Suppose that $R$ is a ring, and $I,J$ are ideals in $R$. Then the $n^{\rm th}$ symbolic power of $I$ with respect to $J$ is
$$
I_n(J)=I^n:J^{\infty}=\cup_{i=1}^{\infty}I^n:J^i.
$$

\begin{Corollary}\label{Cor5} Suppose that $R$ is an analytically unramified local ring of dimension $d$. 
 Suppose that $I$ and $J$ are ideals in $R$.  
 Let $s$ be the constant  limit dimension of $I_n(J)/I^n$ for $n\gg 0$. Suppose that $s<d$. Then
$$
\lim_{n\rightarrow \infty} \frac{e_{m_R}(I_n(J)/I^n)}{n^{d-s}}
$$
exists.
\end{Corollary}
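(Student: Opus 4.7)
The plan is to use additivity of Hilbert--Samuel multiplicity to reduce the problem to a sum of saturation-type limits in localizations of $R$, each of which is handled by Corollary~\ref{Corollary5}. First I would check that $\{I_n(J)\}_{n\ge 0}$ is itself a graded family of ideals (if $xJ^p\subset I^m$ and $yJ^q\subset I^n$, then $xyJ^{p+q}\subset I^{m+n}$), and I would record that localizations of an analytically unramified local Noetherian ring at primes remain analytically unramified (this rests on the characterization of such rings as reduced and Nagata, both properties being preserved under localization). Thus Corollary~\ref{Corollary5} will be available in each of the localizations below.

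Writing $M_n=I_n(J)/I^n$, one has the standard identification $M_n=H^0_J(R/I^n)$, hence $\operatorname{Ass}(M_n)=\{P\in\operatorname{Ass}(R/I^n):J\subset P\}$. By Brodmann's theorem $\operatorname{Ass}(R/I^n)$ is eventually constant in $n$; together with the hypothesis $\dim M_n=s$ for $n\gg 0$, this isolates a finite, $n$-independent family $\{P_1,\dots,P_r\}$ of primes in $\operatorname{Ass}(M_n)$ with $\dim R/P_i=s$ (such primes are automatically minimal in $\operatorname{Supp}(M_n)$). The additivity formula for Hilbert--Samuel multiplicity (Section V-2 of \cite{Se}) then gives, for $n\gg 0$,
$$
e_{m_R}(M_n)\;=\;\sum_{i=1}^{r} e_{m_R}(R/P_i)\,\ell_{R_{P_i}}\bigl((M_n)_{P_i}\bigr).
$$

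The decisive structural step is to identify $(M_n)_{P_i}$ as a saturation quotient in $R_{P_i}$. Since $P_i$ is minimal in $\operatorname{Supp}(M_n)$, no prime $Q\in\operatorname{Ass}(R/I^n)$ with $Q\subsetneq P_i$ can contain $J$. Localizing at $P_i$, the only associated prime of $I^nR_{P_i}$ that contains $JR_{P_i}$ is the maximal ideal $P_iR_{P_i}$ itself, and comparing primary decompositions yields
$$
(I_n(J))_{P_i}\;=\;(I^nR_{P_i}):(JR_{P_i})^\infty\;=\;(I^nR_{P_i}):(P_iR_{P_i})^\infty\;=\;(I^nR_{P_i})^{\mathrm{sat}}.
$$
When $\operatorname{ht}(P_i)>0$, Corollary~\ref{Corollary5} applied to $R_{P_i}$ and $IR_{P_i}$ yields the existence of $\lim_n\ell_{R_{P_i}}\bigl((I^nR_{P_i})^{\mathrm{sat}}/(I^nR_{P_i})\bigr)/n^{\operatorname{ht}(P_i)}$. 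Using $\operatorname{ht}(P_i)+\dim R/P_i\le d$, which gives $\operatorname{ht}(P_i)\le d-s$, one deduces the existence of the corresponding limit with denominator $n^{d-s}$ (it is zero unless the equidimensional case $\operatorname{ht}(P_i)=d-s$ holds). The edge case $\operatorname{ht}(P_i)=0$ is trivial: $R_{P_i}$ is Artin local and $\ell_{R_{P_i}}(R_{P_i}/I^nR_{P_i})$ is eventually constant, so the contribution divided by $n^{d-s}$ tends to $0$. Summing through the additivity formula produces the existence of $\lim_n e_{m_R}(M_n)/n^{d-s}$.

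The main obstacle is to justify each ingredient rigorously, particularly verifying that each $R_{P_i}$ is again analytically unramified so that Corollary~\ref{Corollary5} really applies. The most delicate structural point is the saturation identification itself, which would collapse if some strictly smaller associated prime of $R/I^n$ contained $J$ for infinitely many $n$; the combination of Brodmann's stabilization of $\operatorname{Ass}(R/I^n)$ with the maximality of $\dim R/P_i=s$ is exactly what rules this out and forces the identification to hold uniformly in $n$.
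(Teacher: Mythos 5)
Your proposal follows essentially the same route as the paper's proof: Brodmann's stabilization of $\operatorname{Ass}(R/I^n)$, the additivity formula reducing $e_{m_R}(I_n(J)/I^n)$ to lengths over the finitely many dimension-$s$ primes, the identification $(I_n(J))_p=(I^nR_p)^{\rm sat}$ at such a prime (ruled out exactly as you do, by minimality of $p$ in the support), and then Corollary \ref{Corollary5} applied to $R_p$; the paper phrases the saturation step via the stabilized primary decompositions where you use $I_n(J)/I^n=H^0_J(R/I^n)$, which is only a cosmetic difference, and your explicit treatment of the cases $\operatorname{ht}(p)<d-s$ and $\operatorname{ht}(p)=0$ is if anything slightly more careful than the paper's. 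One correction: your justification that $R_p$ is again analytically unramified rests on the claim that analytically unramified is equivalent to reduced plus Nagata, and that equivalence is false in the direction you use it --- every DVR is analytically unramified (its completion is a complete DVR, hence a domain), while non-Nagata DVRs exist in characteristic $p$. The fact you need is nevertheless true and is exactly what the paper invokes, namely Rees \cite{R3} or Proposition 9.1.4 of \cite{SH}; with that citation substituted for your Nagata argument, the proof is complete.
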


\begin{proof} There exists a positive integer $n_0$ such that the  set of associated primes of $R/I^n$ stabilizes for
$n\ge n_0$ by  \cite{Br}. Let $\{p_1,\ldots, p_t\}$ be this set of associated primes.  We thus  have irredundant primary decompositions
for $n\ge n_0$,
\begin{equation}\label{eq**}
I^n=q_1(n)\cap \cdots \cap q_t(n),
\end{equation}
where $q_i(n)$ are $p_i$-primary.

We further have that 
\begin{equation}\label{eq*}
I^n:J^{\infty}=\cap_{J\not\subset p_i}q_i(n).
\end{equation}
Thus $\dim I_n(J)/I^n$ is constant for $n\ge n_0$. Let $s$ be this limit dimension. The set 
$$
A=\{p\in \cup_{n\ge n_0}{\rm Ass}(I_n(J)/I^n)\mid n\ge n_0\mbox{ and }\dim R/p=s\}
$$
is a finite set. Moreover, every such prime is in ${\rm Ass}(I_n(J)/I^n$ for all $n\ge n_0$. For $n\ge n_0$, we have 
by the additivity formula (V-2 \cite{Se} or Corollary 4.6.8, page 189 \cite{BH}), that
$$
e_{m_R}(I_n(J)/I^n)=\sum_{p}\ell_{R_p}((I_n(J)/I^n)_p)e(m_{R/p})
$$
where the sum is over the finite set of primes $p\in \mbox{Spec}(R)$ such that $\dim R/p=s$. This sum is thus over the finite set $A$.

Suppose that $p\in A$ and $n\ge n_0$. Then 
$$
I^n_p=\cap q_i(n)_p
$$
where the intersection is over the $q_i(n)$ such that $p_i\subset p$, and
$$
I_n(J)=\cap q_i(n)_p
$$
where the intersection is over the $q_i(n)$ such that $J\not\subset p_i$ and $p_i\subset p$. Thus  there exists an index $i_0$ such that $p_{i_0}=p$ and 
$$
I^n_p=q_{i_0}(n)_p\cap I_n(J)_p.
$$
By (\ref{eq**}), 
$$
(I^n_p)^{\rm sat}=I_n(J)_p
$$
for $n\ge n_0$. Since $R_p$ is analytically unramified (by \cite{R3} or Proposition 9.1.4 \cite{SH}) and $\dim R_p\le d-s$, by Corollary \ref{Corollary5}, the limit
$$
\lim_{n\rightarrow \infty} \frac{\ell_R((I_n(J)/I_n)_p)}{n^{d-s}}
$$
exists.
\end{proof}

\vskip .2truein
We now establish some Volume = Multiplicity formulas.

\begin{Theorem}\label{Theorem15} Suppose that $R$ is a $d$-dimensional analytically unramified local ring and  $\{I_i\}$ is a graded family of $m_R$-primary ideals in $R$. Then 
$$
\lim_{n\rightarrow \infty}\frac{\ell_R(R/I_n)}{n^d/d!}=\lim_{p\rightarrow \infty}\frac{e(I_p)}{p^d}
$$
exists.
Here $e(I_p)$ is the multiplicity
$$
e(I_p)=e_{I_p}(R)=\lim_{k\rightarrow \infty} \frac{\ell_R(R/I_p^k)}{k^d/d!}.
$$
\end{Theorem}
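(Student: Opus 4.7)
The strategy is to reduce to the analytically irreducible case and then compare the graded family $\{I_n\}$ with the subfamilies $\{I_p^k\}_k$ via the Newton-Okounkov semigroup machinery of Theorem~\ref{Theorem1} combined with the Fujita-type approximation in Theorem~\ref{ConeTheorem2}. Analytic unramifiedness means the nilradical of $\hat R$ is zero, so Theorem~\ref{Theorem2} shows the left-hand limit exists; call it $\mathrm{vol}(I_*)$. Since completion preserves both $\ell_R(R/I_n)$ (because $I_n$ is $m_R$-primary) and the multiplicity $e(I_p)$, I pass to $\hat R$ and assume $R$ is complete and reduced. Lemma~\ref{Lemma5} implies
$$\lim_{n\rightarrow\infty}\frac{\ell_R(R/I_n)}{n^d/d!}=\sum_{\mathfrak p}\lim_{n\rightarrow\infty}\frac{\ell_{R/\mathfrak p}((R/\mathfrak p)/I_n(R/\mathfrak p))}{n^d/d!}$$
with the sum over minimal primes $\mathfrak p$ of $R$ of maximal dimension $d$, while the associativity formula gives the parallel decomposition $e(I_p)=\sum_{\mathfrak p}e(I_p(R/\mathfrak p))$ over the same primes (primes of smaller dimension contribute $o(n^d)$ or $0$, respectively). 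Since every $R/\mathfrak p$ is a complete local domain, hence analytically irreducible, it suffices to treat the analytically irreducible case.

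In that case, the graded-family relation $I_p^k\subset I_{pk}$ yields $\ell_R(R/I_p^k)\ge\ell_R(R/I_{pk})$; dividing by $(pk)^d/d!$ and letting $k\rightarrow\infty$ gives $e(I_p)/p^d\ge \mathrm{vol}(I_*)$ for every $p$, so $\liminf_{p\rightarrow\infty}e(I_p)/p^d\ge\mathrm{vol}(I_*)$. For the matching upper bound I invoke the construction from Theorem~\ref{Theorem1}: the regular local ring $S$ birational over $R$, the valuation $\nu$, the valuation ideals $K_\lambda$, the constant $\beta=\alpha c$ with $K_{\beta n}\cap R\subset I_n$, the semigroups $\Gamma^{(t)}$ and $\hat\Gamma^{(t)}$, and the finite degree $[k':k]=[V_\nu/m_\nu:R/m_R]$. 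For fixed $p$, apply the same construction to the graded family $\{I_p^k\}_k$ with the valid choice $c_p:=cp$, which is admissible because $m_R^{cp}\subset I_1^p\subset I_p$. The resulting $\beta_p=\alpha cp=\beta p$ makes the ambient semigroups $\hat\Gamma^{(t,p)}_k$ agree with $\hat\Gamma^{(t)}_{pk}$, and, writing $S^{(p),(t)}_k$ for the semigroup attached to $\{I_p^k\}$, the two key inclusions
\begin{equation*}
k*\Gamma^{(t)}_p \;\subset\; S^{(p),(t)}_k \;\subset\; \Gamma^{(t)}_{pk}
\end{equation*}
hold: the left one by iterating Lemma~\ref{Lemmared1} on $k$-fold products of elements of $I_p$ with prescribed $\nu$-values, the right one because $I_p^k\subset I_{pk}$ induces an injection on the quotients $(\cdot\cap K_\lambda)/(\cdot\cap K_\lambda^+)$.

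Subtracting the two length formulas yields
\begin{equation*}
\ell_R(R/I_p^k)-\ell_R(R/I_{pk}) \;=\; \sum_{t=1}^{[k':k]}\bigl(\#\Gamma^{(t)}_{pk}-\#S^{(p),(t)}_k\bigr) \;\le\; \sum_{t=1}^{[k':k]}\bigl(\#\Gamma^{(t)}_{pk}-\#(k*\Gamma^{(t)}_p)\bigr).
\end{equation*}
Dividing by $(pk)^d/d!$ and letting $k\rightarrow\infty$, Theorem~\ref{ConeTheorem1} identifies $\lim_k\#\Gamma^{(t)}_{pk}/(pk)^d$ with $\mathrm{vol}(\Delta(\Gamma^{(t)}))$, while Theorem~\ref{ConeTheorem2} provides, for any $\epsilon>0$ and $p\ge p_0(\epsilon)$, the bound $\lim_k\#(k*\Gamma^{(t)}_p)/(k^dp^d)\ge \mathrm{vol}(\Delta(\Gamma^{(t)}))-\epsilon$. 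Summing over $t$, one obtains $e(I_p)/p^d-\mathrm{vol}(I_*)\le d!\,[k':k]\,\epsilon$ for all $p\gg 0$, so $\limsup_{p\rightarrow\infty}e(I_p)/p^d\le \mathrm{vol}(I_*)$, and combined with the lower bound this proves the theorem.

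The main obstacle I anticipate is the careful bookkeeping of the two semigroup inclusions above with a uniform constraint $|\vec n|\le\beta pk$ on both sides. This is what makes the ambient counts $\hat\Gamma^{(t,p)}_k$ and $\hat\Gamma^{(t)}_{pk}$ match exactly, so that the two length identities subtract cleanly, and it is also what allows the quantitative Fujita statement of Theorem~\ref{ConeTheorem2} to do the essential work.
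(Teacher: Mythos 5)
Your proposal is correct and follows essentially the same route as the paper: reduce to the analytically irreducible case via completion, Lemma \ref{Lemma5} and the additivity formula, then sandwich the semigroup of $\{I_p^k\}_k$ between $k*\Gamma^{(t)}_p$ and $\Gamma^{(t)}_{pk}$ and apply Theorems \ref{ConeTheorem1} and \ref{ConeTheorem2}. The only (harmless) streamlining is that you obtain the lower bound directly from $I_p^k\subset I_{pk}$ and cancel the ambient counts $\hat\Gamma^{(t)}_{pk}$ exactly, so the $\epsilon$-approximation is needed only for the family semigroups rather than for both $\Gamma(R)^{(t)}$ and $\Gamma(I_*)^{(t)}$ as in the paper.
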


Theorem \ref{Theorem15} is proven for valuation ideals associated to an Abhyankar valuation in a regular local ring which is essentially of finite type over a field in  \cite{ELS}, for general families of $m_R$-primary ideals when $R$ is a regular local ring containing a field in \cite{Mus} and when $R$ is a local domain which is essentially of finite type over an algebraically closed field $k$ with $R/m_R=k$ in Theorem 3.8 \cite{LM}. 
It is proven when $R$ is regular or $R$ is analytically unramified with perfect residue field in Theorem 6.5 \cite{C1}.

\begin{proof} 
There exists $c\in\ZZ_+$ such that
$m_R^{c}\subset I_1$. 

We first prove the theorem when $R$ is analytically irreducible, and so satisfies the assumptions of Theorem \ref{Theorem1}. 
We may assume that $R$ is complete.
Let $\nu$ be the valuation of $Q(R)$ constructed from $S$ in the proof of Theorem \ref{Theorem1}, with associated valuation ideals $K_{\lambda}$ in the valuation ring $V_{\nu}$ of $\nu$. Let $k=R/m_R$ and $k'=S/m_S=V_{\nu}/m_{\nu}$.

Apply (\ref{eqred50}) to find
$\alpha\in \ZZ_+$ such that 
$$
K_{\alpha n}\cap R\subset m_R^n
$$
for all $n\in \NN$. 
We have that 
$$
K_{\alpha c n}\cap R \subset m_R^{cn}\subset I_n
$$
 for all $n$.

For $t\ge 1$, let
$$
\Gamma(I_*)^{(t)}=\{\begin{array}{l}
(n_1,\ldots,n_d,i)\mid \dim_k I_i\cap K_{n_1\lambda_1+\cdots+n_d\lambda_d}/I_i\cap K_{n_1\lambda_1+\cdots+n_d\lambda_d}^+\ge t\\
\mbox{ and }n_1+\cdots+n_d\le\alpha c i
\end{array} \},
$$
and
$$
\Gamma(R)^{(t)}=\{\begin{array}{l}
(n_1,\ldots,n_d,i)\mid \dim_k R\cap K_{n_1\lambda_1+\cdots+n_d\lambda_d}/R\cap K_{n_1\lambda-1+\cdots+n_d\lambda_d}^+\ge t\\
 \mbox{ and }n_1+\cdots+n_d\le\alpha c i
 \end{array}\}.
$$
As in the proofs of Lemmas \ref{Lemmared1} and \ref{Lemmared3},  $\Gamma(I_*)^{(t)}$ and $\Gamma(R)^{(t)}$ satisfy the conditions (\ref{Cone2}) and (\ref{Cone3})
of Theorem \ref{ConeTheorem1} when they are not contained in $\{0\}$.
For fixed $p\in \ZZ_+$ and $t\ge 1$, let
$$
\Gamma(I_*)(p)^{(t)}=\{\begin{array}{l}
(n_1,\ldots,n_d,kp)\mid \dim_k I_p^k\cap K_{n_1\lambda_1+\cdots+n_d\lambda_d}/I_p^k\cap K_{n_1\lambda_1+\cdots+n_d\lambda_d}^+\ge t\\
 \mbox{ and }n_1+\cdots+n_d\le\alpha c kp
 \end{array}\}.
$$
We have inclusions of semigroups
$$
k*\Gamma(I_*)_p^{(t)}\subset \Gamma(I_*)(p)^{(t)}_{kp}\subset \Gamma(I_*)^{(t)}_{kp}
$$
for all $p$, $t$ and $k$.

By Theorem \ref{ConeTheorem2}, given $\epsilon>0$, there exists $p_0$ such that $p\ge p_0$ implies
$$
{\rm vol}(\Delta(\Gamma(I_*)^{(t)})-\frac{\epsilon}{[k':k]}\le \lim_{k\rightarrow \infty}\frac{\#(k*\Gamma(I_*)^{(t)}_p)}{k^dp^d}.
$$
Thus
$$
{\rm vol}(\Delta(\Gamma(I_*)^{(t)})-\frac{\epsilon}{[k':k]}\le \lim_{k\rightarrow\infty}\frac{\#\Gamma(I_*)(p)^{(t)}_{kp}}{k^dp^d}
\le{\rm vol}(\Delta(\Gamma(I_*)^{(t)}).
$$
Again by Theorem \ref{ConeTheorem2}, we can choose $p_0$ sufficiently large that we also have that
$$
{\rm vol}(\Delta(\Gamma(R)^{(t)})-\frac{\epsilon}{[k':k]}\le\lim_{k\rightarrow \infty}\frac{\#\Gamma(R)^{(t)}_{kp}}{k^dp^d}\le{\rm vol}(\Delta(\Gamma(R)^{(t)})).
$$
Now 
$$
\ell_R(R/I_p^k)=(\sum_{t=1}^{[k':k]}\#\Gamma(R)^{(t)}_{kp})-(\sum_{t=1}^{[k':k]} \#\Gamma(I_*)(p)^{(t)}_{kp})
$$
and 
$$
\ell_R(R/I_n)=(\sum_{t=1}^{[k':k]} \#\Gamma(R)^{(t)}_n)-(\sum_{t=1}^{[k':k]} \#\Gamma(I_*)^{(t)}_n).
$$
By Theorem \ref{ConeTheorem1},
$$
\lim_{n\rightarrow \infty}\frac{\ell_R(R/I_n)}{n^d}=(\sum_{t=1}^{[k':k]}{\rm vol}(\Delta(\Gamma(R)^{(t)}))-(\sum_{t=1}^{[k':k]}{\rm vol}(\Delta(\Gamma(I_*)^{(t)}))).
$$
Thus
$$
\lim_{n\rightarrow\infty}\frac{\ell_R(R/I_n)}{n^d}-\epsilon\le \lim_{k\rightarrow\infty}\frac{\ell_R(R/I_p^k)}{k^dp^d}
=\frac{e(I_p)}{d!p^d}\le \lim_{n\rightarrow \infty}\frac{\ell_R(R/I_n)}{n^d}+\epsilon.
$$
Taking the limit as $p\rightarrow \infty$, we obtain the conclusions of the theorem.

Now assume that $R$ is analytically unramified. We may assume that $R$ is complete and reduced 
since 
$$
\ell_R(R/I_p^k)=\ell_{\hat R}(\hat R/I_p^k\hat R)
$$
for all $p,k$.

Suppose that the minimal primes of (the reduced ring) $R$ are $\{q_1,\ldots, q_s\}$. Let  $R_i=R/q_i$. $R_i$ are complete local domains.
We  have that
$$
\frac{e_d(I_p,R)}{p^d}=\sum_{i=1}^s\frac{e_d(I_pR_i,R_i)}{p^d}
$$
by the additivity  formula (page V-3 \cite{Se} or Corollary 4.6.8, page 189 \cite{BH}) or directly from Lemma \ref{Lemma5}. We also have  that
$$
\lim_{n\rightarrow\infty}\frac{\ell_R(R/I_n)}{n^d}=\sum_{i=1}^s\lim_{n\rightarrow\infty}\frac{\ell_R(R_i/I_nR_i)}{n^d}
$$
by Lemma \ref{Lemma5}. Since each $R_i$ is analytically irreducible, the limits 
$$
\lim_{n\rightarrow \infty}\frac{\ell_R(R_i/I_nR_i)}{n^d}
$$
 exist by Theorem \ref{Theorem1}.

\end{proof}

Suppose that $R$ is a Noetherian ring, and $\{I_i\}$ is a graded  family of ideals in $R$.
Let
$$
s=s(I_*)=\limsup \dim R/ I_i.
$$
Let $i_0\in \ZZ_+$ be the smallest integer such that
\begin{equation}\label{eq40}
\mbox{$\dim R/I_i \le  s$ for $i\ge i_0$.}
\end{equation}
For $i\ge i_0$ and $p$ a prime ideal in $R$ such that $\dim R/p=s$, we have that $(I_i)_p=R_p$ or $(I_i)_p$ is $p_p$-primary.

$s$ is in general not a limit, as is shown by Example 6.6 \cite{C1}.

Let
$$
T=T(I_*)=\{p\in \mbox{spec}(R)\mid \dim R/p=s\mbox{ and there exist arbitrarily large $j$ such that $(I_j)_p\ne R_p$}\}.
$$

We recall some lemmas from \cite{C1}.

\begin{Lemma}\label{Lemma10}(Lemma 6.7 \cite{C1})  $T(I_*)$ is a finite set.
\end{Lemma}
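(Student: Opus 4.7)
My plan is to exhibit $T(I_*)$ as contained in a finite union of (finite) sets of minimal primes of the $I_j$, and then to use the graded-family inclusions to collapse that union to an honestly finite set.

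For $j \ge i_0$, let $U_j = \{q \in \mathrm{Min}(I_j) : \dim R/q = s\}$; each $U_j$ is finite since $R$ is Noetherian. The first step is to show $T(I_*) \subseteq \bigcup_{j \ge i_0} U_j$. If $p \in T(I_*)$, then $I_j \subseteq p$ for some (in fact infinitely many) $j \ge i_0$; combining $\dim R/p = s$ with $\dim R/I_j \le s$ forces any minimal prime of $I_j$ sitting below $p$ to have dimension exactly $s$, and hence to equal $p$. Therefore $p \in U_j$.

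The main step is to show that $\bigcup_{j \ge i_0} U_j$ is finite. Fix any $a \ge i_0$. From the graded-family inclusion $I_a I_{j-a} \subseteq I_j$, a prime $p \in U_j$ with $j \ge a+i_0$ must contain $I_a$ or $I_{j-a}$, and the same dimension argument as above then places it in $U_a$ or $U_{j-a}$ respectively. Hence
$$
U_j \subseteq U_a \cup U_{j-a} \qquad \text{for all } j \ge a + i_0.
$$
Iterating drops $j$ by $a$ at each step until the residue lands in $\{i_0, i_0+1, \ldots, i_0 + a - 1\}$, giving
$$
\bigcup_{j \ge i_0} U_j \;\subseteq\; U_a \cup \bigcup_{r = i_0}^{i_0 + a - 1} U_r,
$$
a finite union of finite sets.

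The only point requiring care is that some intermediate $U_r$, or even $U_a$ itself, may be empty (for instance when $I_a = R$ or $\dim R/I_a < s$); but in those cases the recursion simply collapses to $U_j \subseteq U_{j-a}$, which is still sufficient to run the iteration. I expect no serious obstacle; the whole argument is driven by the observation that $I_a I_{j-a} \subseteq I_j$ forces the relevant minimal-prime data to be eventually periodic in $j$.
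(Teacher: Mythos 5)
Your argument is correct: since $\dim R/I_r\le s$ for $r\ge i_0$, any prime $p$ with $\dim R/p=s$ containing such an $I_r$ is necessarily a minimal prime of $I_r$, and your use of $I_a^kI_{j-ka}\subset I_j$ together with primeness then places every $p\in T(I_*)$ among the minimal primes of the finitely many ideals $I_a, I_{i_0},\dots,I_{i_0+a-1}$, which is a finite set (the empty cases $I_r=R$ or $\dim R/I_r<s$ being harmless exactly as you note). The paper does not reproduce a proof, citing Lemma 6.7 of \cite{C1}, and the argument there is essentially yours (run with $a=i_0$ and applied directly to a prime $p\in T(I_*)$ rather than to the sets $U_j$), so your proposal matches the intended proof.
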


\begin{Lemma}\label{Lemma11}(Lemma 6.8 \cite{C1}) There exist $c=c(I_*)\in \ZZ_+$ such that if $j\ge i_0$ and $p\in T(I_*)$, then 
$$
p^{jc}R_p\subset I_jR_p.
$$
\end{Lemma}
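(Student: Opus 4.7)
The plan is to fix a prime $p \in T(I_*)$ and produce a constant $c_p \in \ZZ_+$ with $p^{jc_p}R_p \subset I_jR_p$ for every $j \ge i_0$; since $T(I_*)$ is finite by Lemma \ref{Lemma10}, taking $c = \max\{c_p : p \in T(I_*)\}$ will give the required uniform $c(I_*)$.

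Having fixed $p$, I would first observe that for any $j \ge i_0$ with $(I_j)_p \ne R_p$, the ideal $(I_j)_p$ is $pR_p$-primary. Indeed $I_j \subset p$, and $\dim R/p = s$ together with $\dim R/I_j \le s$ forces $p$ to be a minimal prime of $I_j$. Consequently the function
\[
\psi(j) := \min\{n \ge 0 : p^n R_p \subset (I_j)_p\}
\]
is a well-defined non-negative integer for every $j \ge i_0$ (with $\psi(j)=0$ precisely when $(I_j)_p=R_p$), and the conclusion $p^{jc_p}R_p \subset I_jR_p$ amounts to the linear bound $\psi(j) \le c_p j$.

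Next I would exploit the graded family structure: the inclusion $I_{j_1}I_{j_2} \subset I_{j_1+j_2}$ localizes to $(I_{j_1})_p (I_{j_2})_p \subset (I_{j_1+j_2})_p$, from which $\psi(j_1+j_2) \le \psi(j_1) + \psi(j_2)$ for $j_1, j_2 \ge i_0$. Because $p \in T(I_*)$, some $j_0 \ge i_0$ has $(I_{j_0})_p \ne R_p$, making $c' := \psi(j_0)$ a positive integer. For arbitrary $j \ge i_0$, I would write $j = qj_0 + r$ with $q \ge 0$ and $i_0 \le r < i_0 + j_0$, so that iterated subadditivity yields
\[
\psi(j) \;\le\; qc' + \psi(r) \;\le\; \frac{c'}{j_0}\,j + M,
\]
where $M = \max\{\psi(r) : i_0 \le r < i_0 + j_0\}$ is finite. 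Any integer $c_p \ge c'/j_0 + M/i_0$ then satisfies $\psi(j) \le c_p j$ for all $j \ge i_0$.

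The main (minor) obstacle is recognizing that $(I_j)_p$ is $pR_p$-primary whenever it is proper, since that is what makes $\psi$ take finite values; after this dimension-theoretic step, what remains is a Fekete-style linear bound on a subadditive function, and the finiteness of $T(I_*)$ supplied by Lemma \ref{Lemma10} allows the maximum over $p$ to be taken without further work.
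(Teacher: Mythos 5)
Your proposal is correct: the reduction to a single $p$ via the finiteness of $T(I_*)$ (Lemma \ref{Lemma10}), the observation that $(I_j)_p$ is $p_p$-primary whenever it is proper and $j\ge i_0$, and the Fekete-type linear bound on the subadditive function $\psi$ obtained from $I_{j_1}I_{j_2}\subset I_{j_1+j_2}$ together give exactly $p^{jc_p}R_p\subset I_jR_p$, and taking the maximum over the finite set $T(I_*)$ yields a uniform $c$. The paper itself does not reprove this statement but cites Lemma 6.8 of \cite{C1}, whose argument proceeds along essentially these same lines (a fixed $j_0$ with $(I_{j_0})_p$ proper, the graded-family multiplicativity, and control of the finitely many remainders $r$ with $i_0\le r<i_0+j_0$), so your proof is a faithful self-contained version of the intended one.
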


Let 
$$
A(I_*)=\{q\in T(I_*)\mid  \mbox{$I_nR_q$ is $q_q$-primary for  $n\ge i_0$}\}.
$$

\begin{Lemma}\label{Lemma50}(Lemma 6.9 \cite{C1})  Suppose that $q\in T(I_*)\setminus A(I_*)$. Then there exists $b\in \ZZ_+$ such that $q_q^b\subset (I_n)_q$ for all $n\ge i_0$.
\end{Lemma}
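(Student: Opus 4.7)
The plan is to combine the nontrivial bound from Lemma \ref{Lemma11} with a ``unit pushing'' trick, exploiting the existence of some index at which $I_n$ is not contained in $q$. Throughout, we work in the local ring $R_q$, where $q_q$ is the maximal ideal.

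First I would unpack the hypothesis $q\in T(I_*)\setminus A(I_*)$. Since $q\not\in A(I_*)$, there exists some $n_0\ge i_0$ such that $I_{n_0}R_q=R_q$, i.e.\ $I_{n_0}\not\subset q$. Fix an element $f\in I_{n_0}$ with $f\not\in q$, so $f$ is a unit in $R_q$. Next, I would observe the basic dimension fact: for every $m\ge i_0$ with $I_m\subset q$, the ideal $(I_m)_q$ has radical equal to $q_q$ (any minimal prime $p$ of $I_m$ contained in $q$ satisfies $s=\dim R/q\le \dim R/p\le \dim R/I_m\le s$, forcing $p=q$), and $(I_m)_q$ is therefore $q_q$-primary in the local ring $R_q$.

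The second step is to fix a uniform bound on a finite initial segment. For each integer $m$ with $i_0\le m<i_0+n_0$, Lemma \ref{Lemma11} gives $q^{mc}R_q\subset I_mR_q$ for the constant $c=c(I_*)$; take $b:=(i_0+n_0-1)c$, so that $q_q^{b}\subset (I_m)_q$ for every such $m$ (trivially when $(I_m)_q=R_q$).

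The third step propagates this bound to all $n\ge i_0$ using the unit $f$. For $n$ in the range $i_0\le n<i_0+n_0$ the bound is immediate. For $n\ge i_0+n_0$, write $n=kn_0+m$ with $k\ge 1$ and $i_0\le m<i_0+n_0$. Since $\{I_i\}$ is a graded family, $f^k\in I_{kn_0}$ and hence $f^k I_m\subset I_n$, giving $f^k(I_m)_q\subset (I_n)_q$ in $R_q$. Because $f$ is a unit in $R_q$, this yields $(I_m)_q\subset (I_n)_q$, and therefore $q_q^b\subset (I_m)_q\subset (I_n)_q$, as required.

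There is no real obstacle; the only point that requires a moment of care is the dimension argument showing that $(I_m)_q$ is $q_q$-primary (and not merely has radical $q_q$) whenever it is proper, so that Lemma \ref{Lemma11} really produces the uniform initial bound on $[i_0,i_0+n_0)$. Everything else is formal manipulation of the graded family inclusions $I_iI_j\subset I_{i+j}$ together with the fact that $f$ becomes a unit after localization at $q$.
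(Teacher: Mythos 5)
Your proof is correct and is essentially the argument behind the cited Lemma 6.9 of \cite{C1}: since $q\notin A(I_*)$ the dichotomy (stated after (\ref{eq40})) gives some $n_0\ge i_0$ with $(I_{n_0})_q=R_q$, Lemma \ref{Lemma11} supplies a uniform bound on the finite range $i_0\le m<i_0+n_0$, and the graded-family inclusions together with a unit of $R_q$ lying in $I_{n_0}$ propagate that bound to all $n\ge i_0$. No gaps; the only caveat is that the $q_q$-primary observation is needed just to negate membership in $A(I_*)$, not in the propagation step, and you have handled that correctly.
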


We obtain the following asymptotic additivity formula. It is proven in Theorem 6.10 \cite{C1}, with the additional assumption that $R$ is regular or analytically unramified of equicharacteristic zero.

\begin{Theorem}\label{Theorem13} Suppose that $R$ is a $d$-dimensional analytically unramified local ring 
and $\{I_i\}$ is a graded family of  ideals in $R$. Let
$s=s(I_*)=\limsup \dim R/I_i$ and $A=A(I_*)$. Suppose that $s<d$. Then 
$$
\lim_{n\rightarrow \infty}\frac{e_{s}(m_R,R/I_n)}{n^{d-s}/(d-s)!}=\sum_{q\in A}\left(\lim_{k\rightarrow \infty}\frac{e((I_k)_q)}{k^{d-s}}\right) e(m_{R/q}).
$$
\end{Theorem}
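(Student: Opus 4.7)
The plan is to mimic the proof of \cite[Thm.~6.10]{C1}, substituting its main analytic input with the newly-improved Theorem \ref{Theorem15}, which now applies in arbitrary analytically unramified local rings. The first step is the multiplicity additivity formula (\cite[V-2]{Se} or \cite[Cor.~4.6.8]{BH}): for each $n \ge i_0$,
$$
e_s(m_R, R/I_n) = \sum_{\substack{p \in \mbox{Spec}(R) \\ \dim R/p = s}} \ell_{R_p}(R_p/I_n R_p)\, e(m_{R/p}).
$$
A term is nonzero only when $(I_n)_p$ is $p_p$-primary, and primes $p$ of coheight $s$ appearing in this sum for infinitely many $n$ lie in the finite set $T(I_*)$ (Lemma \ref{Lemma10}). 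Split $T(I_*) = A(I_*) \sqcup (T(I_*)\setminus A(I_*))$. For $q$ in the complement, Lemma \ref{Lemma50} supplies $b \in \ZZ_+$ with $q_q^{b}\subseteq (I_n)_q$ for every $n \ge i_0$, whence $\ell_{R_q}(R_q/I_n R_q)\le \ell_{R_q}(R_q/q_q^{b})$ is bounded independently of $n$; since $d-s \ge 1$ these terms contribute $0$ to the limit at normalization $n^{d-s}/(d-s)!$.

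For $q \in A$, the localization $R_q$ is analytically unramified by \cite{R3} (equivalently \cite[Prop.~9.1.4]{SH}). The family $\{I_n R_q\}_{n \ge 0}$ (with $I_0 R_q = R_q$) is graded, $q_q$-primary for $n \ge i_0$, and by Lemma \ref{Lemma11} satisfies $q_q^{nc}\subseteq (I_n)_q$. One then applies Theorem \ref{Theorem15} inside $R_q$---either directly, by following its proof (the valuation/semigroup argument of Theorem \ref{Theorem1} uses the condition $m_R^c\subseteq I_1$ only to control the tail of the filtration, and the analogous bound $q_q^{nc}\subseteq (I_n)_q$ plays the same role here), or by passing to the honest graded subfamily $\{I_{n i_0} R_q\}_{n \ge 0}$ of $q_q$-primary ideals in $R_q$ and squeezing to all $n$---to obtain
$$
\lim_{n \to \infty}\frac{\ell_{R_q}(R_q/I_n R_q)}{n^{d_q}/d_q!} = \lim_{k \to \infty}\frac{e((I_k)_q)}{k^{d_q}}, \qquad d_q := \dim R_q.
$$
When $d_q = d-s$ this is exactly the $q$-summand of the asserted formula. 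When $d_q < d - s$---possible since analytically unramified rings need not be catenary---the polynomial bounds $\ell_{R_q}(R_q/I_n R_q) = O(n^{d_q})$ and $e((I_k)_q) = O(k^{d_q})$ force both sides of the target identity to vanish at $q$ after dividing by $n^{d-s}$ (resp.\ $k^{d-s}$), so these primes contribute nothing.

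The principal obstacle is invoking Theorem \ref{Theorem15} inside $R_q$ in the first place: the family $\{I_n R_q\}$ is only eventually $q_q$-primary rather than $q_q$-primary from $n = 1$, and the dimension $d_q$ need not equal $d-s$. Both issues are resolved above: the first by the subfamily-plus-squeeze argument using Lemma \ref{Lemma11}, the second by the vanishing of both sides of the target identity when $d_q < d-s$. The crucial point that makes this broader statement tractable is precisely that Theorem \ref{Theorem15} has been established in this paper without the equicharacteristic and perfect-residue-field restrictions required in \cite{C1}; assembling the nonzero contributions from $q \in A$ and matching with the additivity expansion then produces the stated formula.
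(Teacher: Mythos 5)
Your proposal is correct and follows essentially the same route as the paper's proof: the additivity formula, reduction to the finite set $T(I_*)$ via Lemma \ref{Lemma10}, killing the contribution of $T(I_*)\setminus A(I_*)$ by Lemma \ref{Lemma50}, and applying Theorem \ref{Theorem15} (together with Lemma \ref{Lemma11}) in the analytically unramified localizations $R_q$. The paper deals with the small-index issue exactly in the spirit you describe (replacing $(I_i)_q$ by $q_q^{ic}$ for $i<i_0$), and your explicit treatment of the case $\dim R_q<d-s$ merely spells out a point the paper leaves implicit.
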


\begin{proof} 
 Let $i_0$ be the (smallest) constant satisfying (\ref{eq40}). By the additivity formula (V-2 \cite{Se} or Corollary 4.6.8, page 189 \cite{BH}), for $i\ge i_0$,
$$
e_{s}(m_R,R/I_i)=\sum_p\ell_{R_p}(R_p/(I_i)_p)e_{m_R}(R/p)
$$
where the sum is over all prime ideals $p$ of $R$ with $\dim R/p=s$. By Lemma \ref{Lemma10}, for $i\ge i_0$,
the sum is actually over the finite set $T(I_*)$ of prime ideals of $R$.

For $p \in T(I_*)$, $R_p$ is a local ring of dimension $\le d-s$. Further, $R_p$  is analytically unramified
(by \cite{R3} or Prop 9.1.4 \cite{SH}). By Lemma \ref{Lemma11}, and by  Theorem \ref{Theorem2},  replacing $(I_i)_p$ with $p_p^{ic}$ if $i<i_0$, we have that
$$
\lim_{i\rightarrow \infty}\frac{\ell_{R_p}(R_p/(I_i)_p)}{i^{d-s}}
$$
exists. Further, this limit is zero if $p\in T(I_*)\setminus A(I_*)$ by Lemma \ref{Lemma50}, and since $s<d$. Finally, we have
$$
\lim_{i\rightarrow \infty}\frac{\ell_{R_q}(R_q/(I_i)_q)}{i^{d-s}/(d-s)!}=\lim_{k\rightarrow \infty}\frac{e_{(I_k)_q}(R_q)}{k^{d-s}}
$$
for $q\in A(I_*)$ by Theorem \ref{Theorem15}.

\end{proof}

\end{document}